\newtheorem{statement}{}[section]
\newtheorem{theo}[statement]{Theorem}
\newtheorem{proposition}[statement]{Proposition}
\newtheorem{defi}[statement]{Definition}
\newtheorem{lem}[statement]{Lemma}
\newtheorem{example}[statement]{Example}
\newcommand{\Hol}{\mathrm{Hol}}
\newcommand{\DD}{\mathbb{D}}
\newcommand{\N}{\mathbb{N}}
\newcommand{\R}{\mathbb{R}}
\newcommand{\Lip}{\mathrm{Lip }}
\newcommand{\kH}{\mathrm{H^\infty  }}
\newcommand{\hH}{\mathrm{H^2 }}
\newcommand{\IL}{\mathrm{L^2}}
\newcommand{\Card}{\mathrm{Card}}
\newcommand\CC{\mathbb{C}}
\newcommand\ZZ{\mathbb{Z}}
\newcommand\TT{\mathbb{T}}
\newcommand\LB{{B_\Lambda^{+}}}
\DeclareMathOperator{\dist}{dist}
\begin{document}
\title{Cyclic vectors in Korenblum type spaces}

\author{Abdelouahab Hanine}
\date{ \today}
\address{ Laboratoire Analyse et Applications -- URAC/03. D\'{e}partement de Math\'emati\-ques,  Universit\'e Mohammed V, Rabat--Agdal--B.P. 1014 Rabat, Morocco}
\address{LATP\\ Aix-Marseille Universit\'e\\39 rue F. Joliot-Curie\\ 13453 Marseille
\\France}
\email{hanine@cmi.univ-mrs.fr, abhanine@gmail.com}

\keywords{Korenblum type space, premeasure, $\Lambda$-Carleson set, cyclic function, $\Lambda$-singular part of premeasure}
\thanks{This work was partially supported by ``Hassan II Academy of Science and Technology" and  by 
the Egide Volubilis program.}

\begin{abstract}
In this paper we use the technique of premeasures, introduced by Korenblum in the 1970-s, to give a characterization of cyclic functions in the Korenblum type spaces $\mathcal{A}_{\Lambda}^{-\infty }$. In particular, we give a positive answer to 
a conjecture by Deninger \cite[Conjecture 42]{CD}.
\end{abstract}
\maketitle

\section{Introduction}
Let $\mathbb{D}$ be the open unit disk in the complex plan $\CC$. Suppose that $X$ is a topological vector space of analytic functions on $\DD$, with the property that $zf\in X$ whenever $f\in X$. Multiplication by $z$ is thus an operator on $X$, and if $X$ is a Banach space, then it is automatically a bounded operator on space $X$. A closed subspace $M\subset X$ (Banach space) is said to be invariant (or $z$-invariant) provided that $zM\subset M$.  For a function $f\in X$, the closed linear span in $X$ of all polynomial multiples of $f$ is an $z$-invariant subspace denoted by $[f]_{X}$; it is also the smallest closed $z$-invariant subspace of $X$ which contains $f$. A function $f$ in  $X$  is said to be cyclic (or weakly invertible)  in $X$ if $[f]_{X}= X$. For some information on cyclic functions 
see \cite{BH} and the references therein.
In the case when $X=A^2(\DD)$ is the Bergman space, defined as
$$
A^2(\DD)=\bigl\{f\ \text{analytic in}\ \DD : \int_0^1\int_0^{2\pi}|f(re^{i\theta})|^2 d\theta<\infty \bigr\},
$$
a singular inner function $S_\mu$, 
$$
S_\mu(z):=\exp-\frac{1}{2\pi}\int_0^{2\pi} \frac{e^{i\theta}+z}{e^{i\theta}-z} d\mu(\theta), \qquad z\in\DD,
$$ 
is cyclic in $A^2(\DD)$  if and only if its associated positive singular measure $\mu$  places no mass on any $\Lambda$-Carleson set for $\Lambda(t)=\log(1/t)$. $\Lambda$-Carleson sets constitute 
a class of thin subsets of $\TT$, they will be discussed shortly. The necessity of this Carleson set condition was proved by H.~S.~Shapiro 
in 1967 \cite[Theorem~2]{HS}, and the sufficiency was proved independently by Korenblum in 1977 \cite{BK3} and Roberts in 1979 
\cite[Theorem~2]{JR}.

In the following a majorant $\Lambda$ will always denote a positive non-increasing convex differentiable function on $(0\text{,}1]$ such that:
\begin{itemize}
  \item $\Lambda(0)  =+\infty$
  \item  $t\Lambda(t)$ is a continuous, non-decreasing and concave function on $[0,1]$, and $t\Lambda(t)\to 0$
  as $t\to 0$. 
  \item There exists  $ \alpha \in (0,1)$ such that $t^{\alpha}\Lambda(t) $ is non-decreasing.
  \item
  \begin{equation}
  \Lambda(t^2)\le C\Lambda(t).
  \label{xf1}
  \end{equation}
\end{itemize}

Typical examples of majorants $\Lambda$ are $\log^+\log^+(1/x)$, 
$(\log(1/x))^p$, $p>0$.

In this work, we shall be interested mainly in studying cyclic vectors in the case $X=\mathcal{A}_{\Lambda }^{-\infty}$, generalizing the theory of premeasures developed by Korenblum; here $\mathcal{A}_{\Lambda }^{-\infty}$ is the  Korenblum type space associated with the majorant $\Lambda$, defined by
$$
\mathcal{A}_{\Lambda }^{-\infty}=\cup_{c>0}\mathcal{A}_{\Lambda }^{-c}=\bigcup_{c>0}\Big\{ f\in\Hol(\DD):|f(z)|\leq
\exp (c\Lambda(1-|z|))\Big\} .
$$
With the norm 
$$
\|f\|_{\mathcal{A}_{\Lambda }^{-c}}=\sup_{z\in\DD}|f(z)|\exp (-c\Lambda(1-r))<\infty,
$$
$\mathcal{A}_{\Lambda }^{-c}$ becomes a Banach space and for every $c_2\ge c_1>0$, the inclusion $\mathcal{A}_{\Lambda }^{-c_1}\hookrightarrow \mathcal{A}_{\Lambda }^{-c_2}$  is continuous. The topology on 
$$
\mathcal{A}_{\Lambda }^{-\infty}=\cup_{c>0}\mathcal{A}_{\Lambda }^{-c},
$$
is the locally-convex inductive limit topology, i.e. each of the inclusions $\mathcal{A}_{\Lambda }^{-c}\hookrightarrow \mathcal{A}_{\Lambda }^{-\infty}$ is continuous and the topology is the largest locally-convex topology with this property. A sequence $\{f_n\}_n\in\mathcal{A}_\Lambda^{-\infty }$ converges to 
$f\in\mathcal{A}_\Lambda^{-\infty}$ if and only if there exists $N>0$ such that all $f_n$ and $f$ belong to $\mathcal{A}_\Lambda^{-N}$, and $\lim_{n\to+\infty}\|f_n-f\|_{\mathcal{A}_\Lambda^{-N}}=0$.

The notion of a premeasure (a distribution of the first class) and the definition of the $\Lambda$-boundedness property of premeasure was first introduced in \cite{BK1}, for the case of $\Lambda(t)=\log(1/t)$  in connection with an extension of the Nevanlinna theory (see also   \cite{BK2} and \cite[Chapter~7]{HKZ}). 
Later on, in \cite{BK4}, Korenblum introduced a space of $\Lambda$-smooth functions 
and proved that the so called premeasures of bounded $\Lambda$-variation are the bounded linear functionals 
on this space. Next, he established that any premeasure of bounded $\Lambda$-variation is the difference of 
two $\Lambda$-bounded premeasures \cite[p.542]{BK4}. Finally, he described the Poisson integrals 
of $\Lambda$-bounded premeasures.

Our paper is organized as follows: In Section 2, we first introduce the notion of a $\Lambda$-bounded premeasure, and we will prove, using some arguments of real-variable theory, a general approximation theorem for $\Lambda$-bounded premeasures which will be critical for describing the cyclic vectors in $\mathcal{A}_{\Lambda }^{-\infty}$. Furthermore, this theorem shows that in respect to some general measure-theoretical properties, premeasure with vanishing $\Lambda$-singular part (see definition \ref{def sing}), behave themselves in some ways like absolutely continuous measures in the classical theory.

In Section 3, we  show that  every $\Lambda$-bounded premeasure $\mu$ generates a harmonic function $h(z)$ in $\DD$ (the Poisson integral of $\mu$) such that
\begin{equation}
h(z)= O(\Lambda (1-|z|)), \qquad |z|\to 1,\,z\in \DD,
\label{eqc}
\end{equation}
by the formula 
$$
h(z)=\int_{\TT} \frac{1-|z|^2}{|e^{i\theta}-z|^2}d\mu.
$$
Conversely, every real harmonic function $h(z)$  in $\DD$,  satisfying $h(0)=0$ and \eqref{eqc} is the Poisson integral of a 
$\Lambda$-bounded premeasure. (This result is formulated in \cite[p.543]{BK4} without proof, in a more general situation).

Finally, in Section 4 we characterize cyclic vectors in the spaces 
$\mathcal{A}_{\Lambda}^{-\infty}$ in terms of vanishing the 
$\Lambda$-singular part of the corresponding premeasure.
We prove two results for two different growth ranges of the majorant $\Lambda$. At the end we give two examples that show how the cyclicity 
property of a fixed function changes in a scale of 
$\mathcal{A}_{\Lambda_\alpha}$ spaces, $\Lambda_\alpha(x)=(\log (1/x))^\alpha$, $0<\alpha<1$.

Throughout the paper we use the following notation: given two functions $f$ and $g$ defined 
on $\Delta$ we write $f\asymp g$ if for some $0<c_1\le c_2<\infty$ we have  
$c_1f\le g\le c_2 f$ on $\Delta$.

\vspace{1em}

{\bf Acknowledgments:}  The author is  grateful to Alexander Borichev, Omar El-Fallah, and Karim Kellay for their useful comments and suggestions on this paper. Also the author would like to thank referee for the valuable comments.

\section{$\Lambda$-bounded premeasures}

In this section we extend the results of two papers by Korenblum \cite{BK1,BK2} on $\Lambda$-bounded premeasures (see also \cite[Chapter~7]{HKZ}) 
from the case $\Lambda(t)=\log(1/t)$ to the general case.

Let $\mathcal{B}(\TT)$ be the set of all (open, half-open and closed) arcs of $\TT$ including all the single points and the empty set. The elements of $\mathcal{B}(\TT)$ will be called intervals.

\begin{defi}\label{kappamesure}
A real function defined on $\mathcal{B}(\TT)$ is called a premeasure if the following conditions hold:
\begin{enumerate}
	\item $\mu(\TT)=0$
	\item $\mathcal{\mu}(I_{1}\cup I_{2})=\mathcal{\mu}(I_{1})+\mathcal{\mu}(I_{2}) $ for every $\ I_{1}$,
          $I_{2}\in\mathcal{B}(\TT)  $ such that $\ I_{1}\cap I_{2}=\emptyset$ and $I_{1}\cup I_{2}\in\mathcal{B}(\TT)$
	\item $\underset{n\to +\infty}{\lim}\mathcal{\mu}(I_{n})  =0$ for every sequence of embedded intervals, $I_{n+1}\subset I_{n}$, $n\ge 1$, such that\ 
$\bigcap_n I_{n}=\emptyset$.
\end{enumerate}
\end{defi}

Given a premeasure $\mu$, we introduce a real valued function $\overset{\curlywedge}{\mu}$ on $(0,2\pi]$ defined as follows: 
$$
\overset{\curlywedge}{\mu}(\theta)  =\mu(I_{\theta}),
$$
where 
$$
I_{\theta}=\big\{\xi\in\TT\text{ : }0\leq\arg\xi <\theta\big\}.
$$
The function $\overset{\curlywedge}{\mu}$ satisfies the following properties:
\begin{enumerate}
	\item[(a)] $\overset{\curlywedge}{\mu}(\theta^{-})$ exists for every 
               $\theta\in(0\text{,}2\pi]  $ and $\overset{\curlywedge}{\mu}( \theta^{+})$ 
               exists for every $\theta\in[0,2\pi)$
\item[(b)] $\overset{\curlywedge}{\mu}(\theta)=\lim_{t\to \theta^{-}}\overset{\curlywedge}{\mu}(t)$ for all 
           $\ \theta\in( 0\text{,}2\pi]$
\item[(c)] $\overset{\curlywedge}{\mu}(2\pi)=\lim_{\theta\to 0^{+}}\overset{\curlywedge}{\mu}(\theta)=0$.
\end{enumerate}
Furthermore, the function $\overset{\curlywedge}{\mu}(\theta)$ has at most countably many points of discontinuity.

\begin{defi}
A real premeasure $\mu$ is said to be $\Lambda$-bounded, if there is a positive number $C_{\mu}$ such that
\begin{equation}\label{bounded}
\mathcal{\mu}(I)\leq C_{\mu}| I| \Lambda(|I|)
\end{equation}
for any interval $I$.
\end{defi}

The minimal number $C_{\mu}$ is called the norm of $\mu$ and is denoted by $\| \mu\|^{+}_{\Lambda}$; 
the set of all real premeasures $\mu$ such that $\|\mu\|^{+}_{\Lambda}<+\infty$ is denoted by $\LB$.

\begin{defi}\label{def2}
A sequence of premeasures $\{\mu_{n}\}_{n}$ is said to be $\Lambda$-weakly convergent to a premeasure $\mu$ if :
\begin{enumerate}
	\item $\sup_n\|\mu_n\|^{+}_{\Lambda}<+\infty$, and 
	\item for every point $\theta$ of continuity of $\overset{\curlywedge}{\mu }$ we have $ \lim_{n\to\infty} \overset{\curlywedge}{\mu}_{n}(\theta)=\overset{\curlywedge}{\mu }(\theta).$
\end{enumerate}
\end{defi}
In this situation, the limit premeasure $\mu$ is $\Lambda$-bounded.
\vspace{1em}

Given a closed non-empty subset $F$ of the unit circle $\TT$, we define its $\Lambda$-entropy as follows:
$$
Entr_{\Lambda}(F)=\sum_{n} |I_{n}|\Lambda(|I_{n}|), 
$$
where $\{I_{n}\}_{n}$ are the component arcs of $\TT\setminus F$, and $|I|$ denotes the normalized Lebesgue measure of $I$ on $\TT$.
We set $Entr_{\Lambda}(\emptyset)=0$.

We say that a closed set $F$ is a $\Lambda$-Carleson set if $F$ is non-empty, has Lebesgue measure zero (i.e $|F|=0$), and 
$Entr_{\Lambda}(F)<+\infty$.

Denote by $\mathcal{C}_{\Lambda}$ the set of all $\Lambda$-Carleson sets and by $\mathcal{B}_{\Lambda}$ the set of all Borel sets $B\subset \TT$ such that $\overline{B} \in \mathcal{C}_{\Lambda}$.

\begin{defi} \label{def sing}
A function $\sigma : \mathcal{B}_{\Lambda} \rightarrow \R$ is called a $\Lambda$-singular measure if
\begin{enumerate}
	\item $\sigma$ is a finite Borel measure on every set in $\mathcal{C}_{\Lambda}$ (i.e. $\sigma{\bigm|}F$ is a Borel measure on $\TT$).
    \item There is a constant $C>0$ such that $$ |\sigma (F) |\leq C Entr_{\Lambda}(F)$$ for all $F \in \mathcal{C}_{\Lambda}$.
\end{enumerate}	
\end{defi}

Given a premeasure $\mu$ in $\LB$, its $\Lambda$-singular part is defined by :
\begin{equation}\label{eq1}
{\mu}_{s}(F)=-\sum_{n} \mu(I_{n}),
\end{equation}
where $F\in\mathcal{C}_{\Lambda}$ and $\{I_{n}\}_{n}$ is the collection of complementary intervals to $F$ in $\TT$. Using 
the argument in \cite[Theorem 6]{BK1} one can see that ${\mu}_{s}$ extends to a $\Lambda$-singular measure on $\mathcal{B}_{\Lambda}$.

\begin{proposition}\label{prop1}
If $\mu$ is a $\Lambda$-bounded premeasure, $F\in\mathcal{C}_\Lambda$, 
then $\mu_s {\bigm|}F$ is finite and non-positive.
\end{proposition}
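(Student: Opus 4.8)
The plan is to work straight from the definition $\mu_{s}(F)=-\sum_{n}\mu(I_{n})$ in \eqref{eq1}, where $\{I_{n}\}_{n}$ denotes the family of complementary arcs of $F$, using nothing beyond the defining inequality \eqref{bounded}, finite additivity, and $\mu(\TT)=0$. One preliminary observation makes everything run: since $\varphi(t):=t\Lambda(t)$ is concave on $[0,1]$ with $\varphi(0^{+})=0$, it is subadditive, i.e.\ $\varphi\big(\sum_{j}a_{j}\big)\leq\sum_{j}\varphi(a_{j})$ for any $a_{j}\geq 0$ with $\sum_{j}a_{j}\leq 1$ (the finite case is immediate from concavity through the origin, the countable case follows by continuity and monotonicity). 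I would also record that, by applying \eqref{bounded} to a shrinking family of arcs around a point (or directly from the third axiom of Definition \ref{kappamesure}), $\mu(\{p\})\leq 0$ for every $p\in\TT$, so that $\mu(J)\leq C_{\mu}\varphi(|J|)$ holds for \emph{every} interval $J$, degenerate or not.

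The main step is a uniform lower bound on finite partial sums. Fix a finite set $A$ of indices; then $\TT\setminus\bigcup_{n\in A}I_{n}$ is a finite disjoint union of closed arcs $J_{1},\dots,J_{M}$, and finite additivity together with $\mu(\TT)=0$ gives $\sum_{n\in A}\mu(I_{n})=-\sum_{\ell}\mu(J_{\ell})$. The key geometric fact is that each $J_{\ell}$ is, modulo the Lebesgue-null set $F\cap J_{\ell}$, the disjoint union of exactly those complementary arcs $I_{n}$ of $F$ that it contains --- say for $n\in A_{\ell}$ --- so that $|J_{\ell}|=\sum_{n\in A_{\ell}}|I_{n}|$ and the index sets $A_{\ell}$ form a partition of $\{n:n\notin A\}$. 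Combining this with \eqref{bounded} and the subadditivity of $\varphi$ arc by arc yields $\sum_{\ell}\mu(J_{\ell})\leq C_{\mu}\sum_{\ell}\varphi(|J_{\ell}|)\leq C_{\mu}\sum_{\ell}\sum_{n\in A_{\ell}}\varphi(|I_{n}|)\leq C_{\mu}\sum_{n\notin A}\varphi(|I_{n}|)\leq C_{\mu}\,Entr_{\Lambda}(F)$, whence
\[
\sum_{n\in A}\mu(I_{n})\ \geq\ -C_{\mu}\,Entr_{\Lambda}(F)\qquad\text{for every finite }A.
\]

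Both assertions of the proposition now follow. Taking $A=\{n\leq N:\mu(I_{n})<0\}$ and letting $N\to\infty$ gives $\sum_{n}\mu(I_{n})^{-}\leq C_{\mu}\,Entr_{\Lambda}(F)$, while $\mu(I_{n})^{+}\leq C_{\mu}\varphi(|I_{n}|)$ gives $\sum_{n}\mu(I_{n})^{+}\leq C_{\mu}\,Entr_{\Lambda}(F)$; hence $\sum_{n}|\mu(I_{n})|<\infty$, the series in \eqref{eq1} converges absolutely, and $|\mu_{s}(F)|\leq 2C_{\mu}\,Entr_{\Lambda}(F)<\infty$ --- this is the finiteness. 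For the sign, fix $\varepsilon>0$ and, using $Entr_{\Lambda}(F)<\infty$, choose a finite $A_{0}$ with $\sum_{n\notin A_{0}}\varphi(|I_{n}|)<\varepsilon$; then for every finite $A\supseteq A_{0}$ the displayed estimate refines to $-\sum_{n\in A}\mu(I_{n})=\sum_{\ell}\mu(J_{\ell})\leq C_{\mu}\sum_{n\notin A}\varphi(|I_{n}|)\leq C_{\mu}\varepsilon$, and letting $A\uparrow\N$ (legitimate by absolute convergence) yields $\mu_{s}(F)\leq C_{\mu}\varepsilon$. As $\varepsilon$ was arbitrary, $\mu_{s}(F)\leq 0$. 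Finally, a closed subset of a $\Lambda$-Carleson set is again a $\Lambda$-Carleson set, so the same bound holds for every closed $E\subseteq F$; together with the regularity of $\mu_{s}$ noted before the proposition this shows that $\mu_{s}\bigm|F$ is a finite, non-positive measure.

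The only point I expect to require care is the combinatorial bookkeeping behind the geometric fact used above: checking that the residual arcs $J_{\ell}$ are indeed tiled up to a null set by complementary arcs of $F$, so that the identity $|J_{\ell}|=\sum_{n\in A_{\ell}}|I_{n}|$ and the disjointness of the $A_{\ell}$ are legitimate, and dealing cleanly with possibly degenerate $J_{\ell}$ --- precisely the situation in which one needs $\mu(\{p\})\leq 0$.
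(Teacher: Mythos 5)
Your proof is correct and follows essentially the same route as the paper's: after removing a finite family of complementary arcs, the residual closed arcs are tiled up to a Lebesgue-null subset of $F$ by the remaining $I_n$, so $\Lambda$-boundedness bounds $-\sum_{n\in A}\mu(I_n)$ by $C_\mu$ times the tail of the entropy series, which tends to zero. Your use of subadditivity of $t\Lambda(t)$ (where the paper implicitly needs only the monotonicity of $\Lambda$), of arbitrary finite index sets instead of the first $N$ arcs, and your explicit handling of degenerate residual arcs and of absolute convergence are minor refinements of the same argument.
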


\begin{proof}
Let $F\in\mathcal{C}_{\Lambda}$. We are to prove that ${\mu}_{s}(F) \leq 0$.


Let $\{I_{n}\}_{n}$ be the (possibly finite) sequence of the intervals complementary to $F$ in $\TT$. For $N\ge 1$, we 
consider the disjoint intervals $\{J^N_{n}\}_{1\le n\le N}$ such that $\TT\setminus \bigcup_{n=1}^{N} I_n=\bigcup_{n}^{N}J^N_n$. 
Then
$$
-\sum_{n= 1}^{N}\mu(I_{n})=\underset
{n=1}{\overset{N}{\sum}}\mu(J^N_{n})\leq\| \mu\|
^{+}_{\Lambda}\sum_{n= 1}^{N}|J^N_{n}|
\Lambda(|J^N_{n}|).
$$
Furthermore, each interval $J^N_n$ is covered by intervals $I_m\subset J^N_n$ up to a set of measure zero, and 
$\max_{1\le n\le N}|J^N_n|\to 0$ as $N\to\infty$ (If the sequence $\{I_{n}\}_{n}$ is finite, then all $J^N_n$ are single points 
for the corresponding $N$).
Therefore,
$$
-\sum_{n= 1}^{N}\mu(I_{n})  \leq\|
\mu\| ^{+}_{\Lambda}\sum_{n= 1}^{N}\underset{I_m\subset J^N_n}{\sum}|I_m|\Lambda (|I_m|)  \leq\| \mu\| ^{+}_{\Lambda}\underset{n>N}{\sum}|
I_{n}|\Lambda(|I_{n}|).
$$
Since $F$ is a $\Lambda$-Carleson set,
$$
-\lim_{N\to\infty}\sum_{n=1}^N\mu(I_n)\le 0.
$$
Thus, $\mu_s {\bigm|}F\le 0$.
\end{proof}

Given a closed subset $F$ of $\TT$, we denote by $F^\delta$ its $\delta$-neighborhood:
$$
F^\delta=\{\zeta\in \TT\text{ : }d(\zeta,F)\leq \delta\}.
$$

\begin{proposition}\label{prop}
Let $\mu$ be a $\Lambda$-bounded premeasure and let ${\mu}_{s}$ be its $\Lambda$-singular part. Then for every 
$F\in\mathcal{C}_{\Lambda}$ we have
\begin{equation}
\label{x22}
\mu_s(F)=\lim_{\delta\to 0}\mu(F^\delta).
\end{equation}
\end{proposition}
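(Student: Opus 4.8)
The plan is to reduce everything to the complementary arcs of $F$. Let $\{I_n\}_n$ be the (at most countably many) components of $\TT\setminus F$, ordered so that $|I_1|\ge|I_2|\ge\cdots$, and set $\phi(t)=t\Lambda(t)$, a non-decreasing function with $\phi(0^+)=0$. A first, purely geometric, observation is that the nearest point of $F$ to a point of $I_n$ is one of the two endpoints of $I_n$ (because $F\cap\overline{I_n}$ consists exactly of those two endpoints), so that for every $\delta>0$
\[
\TT\setminus F^\delta=\bigsqcup_{n:\,|I_n|>2\delta}\widetilde I_n^\delta ,
\]
where $\widetilde I_n^\delta$ is obtained from $I_n$ by deleting from each end a sub-arc $L_n^\delta$ (resp. $R_n^\delta$) of length $\delta$; the union is finite since $\sum_n|I_n|\le 1$. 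Extending $\mu$ additively to finite unions of intervals (so that $\mu(F^\delta)$, the $\mu$-mass of the complementary finite union of closed arcs, is defined), $\mu(\TT)=0$ gives $\mu(F^\delta)=-\sum_{|I_n|>2\delta}\mu(\widetilde I_n^\delta)$. Combining this with $\mu(\widetilde I_n^\delta)=\mu(I_n)-\mu(L_n^\delta)-\mu(R_n^\delta)$ and with the definition $\mu_s(F)=-\sum_n\mu(I_n)$ (the convergence of this series being immediate from the estimate in the proof of Proposition \ref{prop1}), one arrives at the identity
\[
\mu(F^\delta)=\mu_s(F)+A_\delta+B_\delta,\qquad A_\delta:=\sum_{n:\,|I_n|\le 2\delta}\mu(I_n),\qquad B_\delta:=\sum_{n:\,|I_n|>2\delta}\bigl(\mu(L_n^\delta)+\mu(R_n^\delta)\bigr).
\]
Thus it suffices to show $A_\delta+B_\delta\to 0$ as $\delta\to 0$.

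Since $\{n:|I_n|\le2\delta\}$ is a final segment of $\N$ for our ordering, $A_\delta$ is a tail of the convergent series $\sum_n\mu(I_n)$ and tends to $0$. The whole content of the statement lies in the collar sum $B_\delta$, and the main obstacle is that $\Lambda$-boundedness controls the $\mu$-mass of a small arc only from above: there are $\asymp 1/\delta$ collars, each controlled only to size $\asymp\phi(\delta)$, so a crude term-by-term estimate blows up like $\Lambda(\delta)$. One therefore carries out two one-sided estimates at different scales. For $\limsup_{\delta\to0}B_\delta\le0$: $\Lambda$-boundedness gives $\mu(L_n^\delta)+\mu(R_n^\delta)\le2\|\mu\|^+_\Lambda\phi(\delta)$, hence $B_\delta\le2\|\mu\|^+_\Lambda\phi(\delta)\,\#\{n:|I_n|>2\delta\}$. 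Given $\epsilon>0$ choose $\rho$ with $\sum_{|I_n|\le\rho}\phi(|I_n|)<\epsilon$ (possible since $Entr_{\Lambda}(F)<\infty$) and split the counting function at the scale $\rho$: since $\phi$ is non-decreasing, $\phi(\delta)\le\phi(|I_n|)$ whenever $|I_n|>2\delta$, so $\phi(\delta)\,\#\{n:2\delta<|I_n|\le\rho\}\le\sum_{|I_n|\le\rho}\phi(|I_n|)<\epsilon$, while $\#\{n:|I_n|>\rho\}$ is a fixed finite number; letting $\delta\to0$ and then $\epsilon\to0$ finishes this half.

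For $\liminf_{\delta\to0}B_\delta\ge0$ the scale splitting is replaced by a splitting at an index. Fix $N$ and, for $\delta$ small enough that $2\delta<|I_N|$, write $B_\delta=\sum_{n\le N}\bigl(\mu(L_n^\delta)+\mu(R_n^\delta)\bigr)+\sum_{n>N,\,|I_n|>2\delta}\bigl(\mu(L_n^\delta)+\mu(R_n^\delta)\bigr)$. In the first, finite, sum each $L_n^\delta$ shrinks to $\emptyset$ as $\delta\to0$ (its abutting endpoint does not lie in $I_n$, so $\bigcap_{\delta>0}L_n^\delta=\emptyset$), hence by the third axiom of Definition \ref{kappamesure} this sum tends to $0$ for fixed $N$. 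In the second sum I rewrite $\mu(L_n^\delta)+\mu(R_n^\delta)=\mu(I_n)-\mu(\widetilde I_n^\delta)$ and bound $\mu(\widetilde I_n^\delta)\le\|\mu\|^+_\Lambda\phi(|I_n|)$, so it is $\ge\sum_{n>N,\,|I_n|>2\delta}\mu(I_n)-\|\mu\|^+_\Lambda\sum_{n>N}\phi(|I_n|)$; here the first sum is a difference of two partial sums of the convergent series $\sum_n\mu(I_n)$ with both indices $\ge N$, hence $<\epsilon$ in absolute value once $N$ is large, and the second is a tail of the convergent series $\sum_n\phi(|I_n|)=Entr_{\Lambda}(F)$, hence also $<\epsilon$ — both bounds uniform in $\delta$. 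Consequently $\liminf_{\delta\to0}B_\delta\ge-3\epsilon$ for every $\epsilon>0$, so $\liminf_{\delta\to0}B_\delta\ge0$. Together with the previous paragraph this yields $B_\delta\to0$, and therefore $\mu(F^\delta)\to\mu_s(F)$. Along the way I would also verify the two elementary points used above: that $F\cap\overline{I_n}$ is precisely the pair of endpoints of $I_n$ (which legitimizes the geometric description of $F^\delta$), and that $F^\delta$ belongs to the algebra generated by the intervals (which legitimizes writing $\mu(F^\delta)$).
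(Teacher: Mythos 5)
Your proof is correct, and its skeleton coincides with the paper's: you describe $\TT\setminus F^{\delta}$ as the union of the truncated complementary arcs, which gives exactly the paper's identity $\mu(F^{\delta})-\mu_{s}(F)=\sum_{|I_{n}|\le 2\delta}\mu(I_{n})+\sum_{|I_{n}|>2\delta}\bigl[\mu(L_{n}^{\delta})+\mu(R_{n}^{\delta})\bigr]$, and your $\limsup$ estimate (bounding each collar by $\|\mu\|^{+}_{\Lambda}\delta\Lambda(\delta)$, counting the collars, and splitting at a scale $\rho$ chosen from the finiteness of the entropy) is a rephrasing of the paper's argument, which uses the ratio $\delta\Lambda(\delta)/\bigl(|I_{n}|\Lambda(|I_{n}|)\bigr)\le 1$ and dominated convergence over the convergent series $\sum_{n}|I_{n}|\Lambda(|I_{n}|)$. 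Where you go beyond the written proof is the $\liminf$ half: since \eqref{bounded} is a one-sided bound, the paper's displayed estimates only yield $\limsup_{\delta\to 0}\sum_{|I_{n}|>2\delta}\mu(I_{n}^{1})\le 0$, and the reverse inequality is left implicit; your argument — the first $N$ collars handled by axiom (3) of Definition \ref{kappamesure} (each collar shrinks to the empty set because its abutting endpoint lies in $F$, hence outside $I_{n}$), the remaining ones bounded below by a Cauchy tail of the convergent series $\sum_{n}\mu(I_{n})$ minus $\|\mu\|^{+}_{\Lambda}$ times a tail of the entropy, uniformly in $\delta$ — is precisely what is needed to make the limit two-sided, so your version is the more complete one. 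Two small points worth writing out: axiom (3) is stated for sequences, so fix a decreasing sequence $\delta_{j}\downarrow 0$ (any sequence witnessing a failure of the limit has a decreasing subsequence), and the absolute convergence of $\sum_{n}\mu(I_{n})$ follows from $\mu(I_{n})\le\|\mu\|^{+}_{\Lambda}|I_{n}|\Lambda(|I_{n}|)$ combined with the lower bound on partial sums from the proof of Proposition \ref{prop1}, as you indicate.
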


\begin{proof}
Let $F \in \mathcal{C}_{\Lambda}$, and let $\{I_{n}\}_{n},$ $|I_{1}|\geq|I_{2}|\geq\ldots,$ be the intervals 
of the complement to $F$ in $\TT$. We set 
$$
I_{n}^{(\delta)}=\big\{e^{i\theta} : \dist(e^{i\theta},\TT \setminus I_n)>\delta \big\}.
$$
Then for $|I_{n}|\geq 2\delta,$ we have 
$$
I_{n}=I_{n}^{1}\sqcup I_{n}^{(\delta)}\sqcup I_{n}^{2}
$$
with $|I_{n}^{1}|=|I_{n}^{2}|=\delta$. 
We see that
$$
\mu(F^{\delta})=-\underset{|I_{n}|> 2\delta}{\sum} \mu(I_{n}^{(\delta)}).
$$
Using relation \eqref{eq1} we obtain that 
\begin{eqnarray*}
 -{\mu}_{s}(F)&=&\sum_{n} \mu(I_{n})\\
 &=&\underset{|I_{n}|\le 2\delta}{\sum} \mu(I_{n})+\underset{|I_{n}|> 2\delta}{\sum} \Big[\mu(I_{n}^{1})+\mu(I_{n}^{(\delta)})+\mu(I_{n}^{2})\Big]\\
 &=&\underset{|I_{n}|\le 2\delta}{\sum} \mu(I_{n})-\mu(F^{\delta})+
 \underset{|I_{n}|> 2\delta}{\sum} \big[\mu(I_{n}^{1})+\mu(I_{n}^{2})\big].
\end{eqnarray*}
Therefore,
$$
\mu(F^{\delta})-{\mu}_{s}(F)=\underset{|I_{n}|\le 2\delta}{\sum} \mu(I_{n})+\underset{|I_{n}|> 2\delta}{\sum} \big[\mu(I_{n}^{1})+\mu(I_{n}^{2})\big]
$$
The first sum tends to zero as $\delta\to 0$, and it remains to prove that
\begin{equation}
\label{x21}
\lim_{\delta\to 0}\underset{|I_{n}|>2\delta}{\sum} \mu(I_{n}^{1})=0.
\end{equation}

We have
$$
\underset{|I_{n}|>2\delta}{\sum} \mu(I_{n}^{1})\le C\underset{|I_{n}|>\delta}{\sum} \delta\Lambda(\delta)=
C\underset{|I_{n}|>\delta}{\sum} \frac{\delta\Lambda(\delta)}{|I_{n}|\Lambda(|I_{n}|)}\cdot|I_{n}|\Lambda(|I_{n}|).
$$
Since the function $t\mapsto t\Lambda(t)$ does not decrease, we have
$$
\frac{\delta\Lambda(\delta)}{|I_{n}|\Lambda(|I_{n}|)}\le 1, \qquad |I_{n}|>\delta.
$$
Furthermore,
$$
\lim_{\delta\to 0}\frac{\delta\Lambda(\delta)}{|I_{n}|\Lambda(|I_{n}|)}=0, \qquad n\ge 1.
$$
Since
$$
\sum_{n\ge 1} |I_{n}|\Lambda(|I_{n}|)<\infty,
$$
we conclude that \eqref{x21}, and, hence, \eqref{x22} hold.

\end{proof}

\begin{defi}\label{def1}
A premeasure $\mu$ in $\LB$ is said to be $\Lambda$-absolutely continuous if there exists a sequence 
of $\Lambda$-bounded premeasures $(\mu_{n})_{n}$ such that:
\begin{enumerate}\label{ab-continuous}
	\item  
	$\sup_n \|
	\mu _{n}\|^{+}_{\Lambda}< +\infty $.
	\item $\sup_{I \in\mathcal{B}(\TT)} |(\mu+\mu_{n})(I)
|\to 0$ as $n\to +\infty$.
\end{enumerate}
\end{defi}

\begin{theo} \label{theo app }
Let $\mu$ be a premeasure in $\LB$. Then $\mu$ is $\Lambda$-absolutely continuous if and only if its 
$\Lambda$-singular part $\mu_{s}$ is zero.
\end{theo}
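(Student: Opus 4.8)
The plan is to prove the two implications separately, using Proposition \ref{prop} as the bridge between the analytic quantity $\mu(F^\delta)$ and the measure-theoretic quantity $\mu_s(F)$.

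For the easy direction, suppose $\mu$ is $\Lambda$-absolutely continuous, witnessed by a sequence $(\mu_n)_n$ as in Definition \ref{def1}. Fix $F\in\mathcal{C}_\Lambda$ and $\delta>0$. The $\delta$-neighborhood $F^\delta$ is a finite union of arcs (for $\delta$ small, one for each complementary interval $I_n$ with $|I_n|>2\delta$, plus possibly the finitely many $I_n$ with $|I_n|\le 2\delta$ absorbed into a bounded number of arcs), so from condition (2) we get $\mu(F^\delta)=\lim_{n}(-\mu_n(F^\delta))$, and by condition (1) together with $\Lambda$-boundedness of each $\mu_n$ applied on each constituent arc, $|\mu_n(F^\delta)|\leq C\,\mathrm{Card}\,\{\text{arcs}\}\cdot\sup_I |I|\Lambda(|I|)$ — this is too crude. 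Instead I would bound $|\mu_n(F^\delta)|$ directly: writing $F^\delta$ as a disjoint union of arcs $A_j$, $\sum_j |\mu_n(A_j)| \le \|\mu_n\|^+_\Lambda \sum_j |A_j|\Lambda(|A_j|)$, and since the $A_j$ together with the "cores" $I_n^{(\delta)}$ tile $\TT$, one can compare $\sum_j|A_j|\Lambda(|A_j|)$ with $Entr_\Lambda(F)$ plus a term that vanishes as $\delta\to 0$; hence $\limsup_\delta \limsup_n |\mu_n(F^\delta)|$ is controlled, and more simply $|\mu(F^\delta)| = \lim_n|\mu_n(F^\delta)| \leq \sup_n\|\mu_n\|^+_\Lambda \cdot (\text{entropy-type bound})$, uniformly in $\delta$. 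Combined with Proposition \ref{prop}, $\mu_s(F)=\lim_\delta\mu(F^\delta)$, and the same estimate applied to $\mu+\mu_n$ (whose sup-norm over arcs goes to $0$) forces $\mu_s(F)=0$. Since $F\in\mathcal{C}_\Lambda$ was arbitrary, $\mu_s\equiv 0$.

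For the hard direction, assume $\mu_s=0$. The task is to construct premeasures $(\mu_n)_n$ with $\sup_n\|\mu_n\|^+_\Lambda<\infty$ and $\sup_{I}|(\mu+\mu_n)(I)|\to 0$. The natural idea is a mollification: the function $\wm$ on $(0,2\pi]$ has bounded $\Lambda$-variation in Korenblum's sense, so one regularizes it by convolving with a smooth approximate identity at scale $1/n$ (or by averaging $\wm$ over translates) to obtain functions $\wm_n$ that are, say, Lipschitz, and set $\mu_n$ to be the premeasure with $-\wm_n$ as its associated function — the sign chosen so that $\mu+\mu_n$ corresponds to $\wm-\wm_n$. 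One must check: (i) each $\mu_n$ is a genuine premeasure (additivity and the vanishing condition (3) in Definition \ref{kappamesure}); (ii) $\sup_n\|\mu_n\|^+_\Lambda<\infty$, i.e. the $\Lambda$-boundedness estimate survives the mollification — this uses that averaging $\mu$ over translates of an interval $I$ by amounts $\le 1/n$ produces values bounded by $\|\mu\|^+_\Lambda \sup_{|J|\asymp|I|+1/n}|J|\Lambda(|J|)$, and condition \eqref{xf1} together with the doubling-type monotonicity $t^\alpha\Lambda(t)$ nondecreasing keeps this $\asymp |I|\Lambda(|I|)$ when $|I|\gtrsim 1/n$, while for $|I|\lesssim 1/n$ one uses that $\wm_n$ is Lipschitz with constant $\lesssim n\Lambda(1/n)$, giving $|(\mu_n)(I)|\lesssim |I|\cdot n\Lambda(1/n)\lesssim |I|\Lambda(|I|)$ again by monotonicity of $t\mapsto t\Lambda(t)$; (iii) the crucial convergence $\sup_I|(\mu+\mu_n)(I)|\to 0$.

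The main obstacle is step (iii). Pointwise, $\wm_n(\theta)\to\wm(\theta)$ at continuity points is automatic for a reasonable mollification, but uniform convergence of $\mu(I)-\mu_n(I)$ over \emph{all} intervals $I$ is exactly where the hypothesis $\mu_s=0$ must enter, and it cannot follow from soft arguments alone — a nonzero $\Lambda$-singular part would obstruct it. Here is where I expect the real-variable heart of the paper to lie: one should argue by contradiction, assuming there are intervals $I_n$ with $|(\mu+\mu_n)(I_n)|\ge\varepsilon_0>0$; by $\Lambda$-boundedness the $I_n$ cannot be too long, so after passing to a subsequence their closures shrink to a closed set, and a covering/stopping-time argument (in the spirit of the Korenblum construction, decomposing $\TT$ by the scale at which $\wm$ oscillates) extracts from the persistent mass a closed set $F$ that must be a $\Lambda$-Carleson set carrying nonzero $\mu_s$-mass — contradicting $\mu_s=0$ via Proposition \ref{prop1} and Proposition \ref{prop}. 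Making this extraction quantitative — controlling the entropy $Entr_\Lambda(F)$ of the limiting set using $\|\mu\|^+_\Lambda$ and the failure of convergence — is the technical core, and it is where assumptions \eqref{xf1} and the concavity of $t\Lambda(t)$ will be used repeatedly to sum geometric series over dyadic scales.
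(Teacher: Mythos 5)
Your ``only if'' direction (absolute continuity $\Rightarrow\mu_s=0$) is workable: combining Proposition~\ref{prop} with the one--sided estimate $\mu_n(F^\delta)\le\sup_n\|\mu_n\|_\Lambda^+\,\Lambda(2\delta)|F^\delta|$ (which tends to $0$ with $\delta$ by the same dominated-convergence argument as in Proposition~\ref{prop}) and with Proposition~\ref{prop1} gives $\mu_s(F)=0$; this is close in spirit to the paper, which works instead with the complementary arcs and the singular parts $(\mu+\mu_n)_s$. The serious problem is the ``if'' direction. Your mollification does not produce an admissible sequence: Definition~\ref{def1} requires the approximants $\mu_n$ themselves to lie in $\LB$ with $\sup_n\|\mu_n\|_\Lambda^+<\infty$, and this norm is \emph{one-sided}. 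Your step (ii) bounds the wrong side: averaging $\mu$ over translates of $I$ controls $\nu_n(I)$ (the smoothed $\mu$) from above, hence $\mu_n(I)=-\nu_n(I)$ from \emph{below}, which says nothing about $\|\mu_n\|_\Lambda^+$. Since $-\mu$ need not belong to $\LB$ (that would force $|\mu(I)|\le C|I|\Lambda(|I|)$, i.e.\ two-sided $\Lambda$-boundedness, which is not assumed and typically fails even when $\mu_s=0$), neither does its mollification: take $d\mu=(g-\int g\,dm)\,dm$ with $g\le 1$ but $g(\theta)=-\theta^{-1/2}$ near $\theta=0$, and $\Lambda(t)=(\log(1/t))^2$; then $\mu\in\LB$ and $\mu_s\equiv0$, yet for $I=[\theta,2\theta]$ with $\theta\gg 1/n$ one has $\mu_n(I)=-\nu_n(I)\asymp\theta^{1/2}\gg |I|\Lambda(|I|)$, so $\sup_n\|\mu_n\|_\Lambda^+=\infty$. (Also your small-scale estimate is reversed: monotonicity of $t\mapsto t\Lambda(t)$ gives $|I|\Lambda(|I|)\le n^{-1}\Lambda(1/n)$ for $|I|\le 1/n$, so $|I|\,n\Lambda(1/n)\lesssim|I|\Lambda(|I|)$ is false, and the two-sided Lipschitz constant of the mollified distribution function is only $O(n)$, not $O(n\Lambda(1/n))$.) The right approximants must ``cut off'' the negative singularities of $\mu$, not smooth them, and producing them is exactly the content of the theorem.

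Beyond this, your step (iii) --- the uniform smallness of $(\mu+\mu_n)(I)$ over \emph{all} intervals, which is where $\mu_s=0$ must be used --- is explicitly left as a hope (``I expect\dots the technical core''), and the sketched contradiction (shrinking intervals yielding a Carleson set of negative singular mass) is not an argument. The paper proceeds quite differently and non-constructively: it proves the contrapositive, first characterizing $\Lambda$-absolute continuity by the solvability of finite linear systems (Lemma~\ref{lem2}, whose converse uses a Helly-type selection theorem), then using the linear-programming duality of Lemma~\ref{lem1} to extract, when absolute continuity fails, simple coverings with quantitatively negative truncated sums (Lemma~\ref{deduce}); from these it builds sets $F_M$ with bounded $\Lambda$-entropy, small measure and $\mu(F_M)\le-\varepsilon$, passes to a limit $\Lambda$-Carleson set $F$ via the normal-families Lemma~\ref{normal.f}, and finally shows $\mu_s(F)<0$ by an iterative annular decomposition combined with Proposition~\ref{prop}, reaching a contradiction with $\mu\in\LB$. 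None of this machinery, nor any substitute for it, is present in your proposal, so the hard implication remains unproved.
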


The only if part holds in a more general situation considered by Korenblum, \cite[Corollary, p.544]{BK4}.
On the other hand, the if part does not hold for differences of $\Lambda$-bounded premeasures 
(premeasures of $\Lambda$-bounded variation), see \cite[Remark, p.544]{BK4}.

To prove this theorem we need several lemmas. The first one is a linear programming lemma from \cite[Chapter 7]{HKZ}.

\begin{lem} \label{lem1} Consider the following system of $N(N+1)/2$ linear inequalities
in $N$ variables $x_1,\ldots,x_N$
\begin{equation*}
\sum_{j=k}^l x_j\le b_{k,l},\qquad 1\le k\le l\le N,
\end{equation*}
subject to the constraint: $x_1+x_2+\cdots+x_N=0$.
This system has a solution if and only if 
$$
\sum_n b_{k_{n},l_{n}}\ge 0
$$ 
for every simple covering  $\mathcal{P}=\{ [ k_{n},l_{n}]\}_n$ of $[1,N]$.
\end{lem}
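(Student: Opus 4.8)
The plan is to reduce the system to a statement about partial sums, whereupon it becomes a system of \emph{difference constraints}, whose solvability is governed by a negative-cycle condition that translates exactly into the simple-covering inequalities. Introduce $S_0=0$ and $S_m=x_1+\cdots+x_m$ for $1\le m\le N$, so that $\sum_{j=k}^{l}x_j=S_l-S_{k-1}$, the inequalities become $S_l-S_{k-1}\le b_{k,l}$, and the side condition $x_1+\cdots+x_N=0$ becomes $S_N=S_0$. The necessity (``only if'') direction is then immediate by telescoping: given a solution and a simple covering $\{[k_n,l_n]\}_{1\le n\le m}$ of $[1,N]$ --- that is, a partition into consecutive blocks with $k_1=1$, $k_{n+1}=l_n+1$, $l_m=N$ --- we sum the associated inequalities and use $k_n-1=l_{n-1}$ to get
\[
\sum_{n=1}^{m} b_{k_n,l_n}\ \ge\ \sum_{n=1}^{m}\bigl(S_{l_n}-S_{k_n-1}\bigr)\ =\ S_{l_m}-S_{k_1-1}\ =\ S_N-S_0\ =\ 0 .
\]

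For sufficiency I assume all simple-covering sums are nonnegative and construct $S$ directly. Form the weighted directed graph $G$ on vertices $\{0,1,\ldots,N\}$ with a forward edge $p\to q$ of weight $b_{p+1,q}$ for every $0\le p<q\le N$, together with two closing edges $0\to N$ and $N\to 0$, both of weight $0$, which are there to encode the equality $S_N=S_0$. The key input is the classical equivalence for difference constraints: the system $S_{\mathrm{head}(e)}-S_{\mathrm{tail}(e)}\le w(e)$ (ranging over all edges $e$ of $G$) is solvable if and only if $G$ has no cycle of negative total weight; moreover, when solvable, a solution is given by shortest-path potentials (adjoin a source joined to every vertex by a $0$-weight edge and let $S_v$ be the shortest distance to $v$).

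It then remains to identify the negative-cycle condition with the hypothesis. Since the forward $b$-edges strictly increase the vertex index, any cycle must use a backward step, and the only backward edge is $N\to 0$; hence a cycle consists of that edge followed by a forward path $0=p_0<p_1<\cdots<p_r=N$, of weight $\sum_{i=1}^{r} b_{p_{i-1}+1,p_i}$, unless it is the trivial two-edge loop $0\to N\to 0$ through the closing edges, of weight $0$. The blocks $[p_{i-1}+1,p_i]$ are precisely a simple covering of $[1,N]$, and every simple covering arises this way, so the nontrivial cycle weights are exactly the simple-covering sums, all $\ge 0$ by hypothesis, while the trivial loops have weight $0$. Thus $G$ has no negative cycle, the difference system is solvable, and from the two closing edges we read off $S_N\le S_0$ and $S_0\le S_N$, i.e. $S_N=S_0$. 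Setting $x_j=S_j-S_{j-1}$ yields $x_1+\cdots+x_N=S_N-S_0=0$ together with $\sum_{j=k}^{l}x_j=S_l-S_{k-1}\le b_{k,l}$, which is the desired solution.

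The main obstacle is the interaction of the equality $\sum_j x_j=0$ with the inequalities: a shortest-path potential based at the single source $0$ only forces $S_N\ge 0$, and pinning the exact equality $S_N=S_0$ is precisely what requires adjoining \emph{both} zero-weight closing edges and verifying that they create no spurious negative cycle. I would note an alternative, graph-free route via duality: by Farkas' lemma (in the form allowing an equality constraint), infeasibility is equivalent to the existence of nonnegative multipliers $\lambda_{k,l}$ covering every index $j\in[1,N]$ with equal total weight --- normalizable to a fractional exact cover, $\sum_{(k,l)\ni j}\lambda_{k,l}=1$ --- for which $\sum \lambda_{k,l} b_{k,l}<0$. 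Since the point--interval incidence matrix has the consecutive-ones property and is therefore totally unimodular, the fractional-cover polytope is integral, its vertices are the $0/1$ exact covers, i.e. the simple coverings, and minimizing $\sum\lambda_{k,l}b_{k,l}$ over it reduces to minimizing over simple coverings. Either way the crux is the identification of the extreme feasibility certificates with simple coverings; I would present the difference-constraint version as the more self-contained of the two.
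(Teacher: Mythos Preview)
Your proof is correct. Note, however, that the paper does not actually prove this lemma: it is stated with the attribution ``a linear programming lemma from \cite[Chapter~7]{HKZ}'' and no argument is given. So there is no in-paper proof to compare against; what you have written supplies a self-contained proof where the paper simply quotes a reference.

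On the substance: your reduction to difference constraints via the partial sums $S_m=x_1+\cdots+x_m$ and the Bellman--Ford negative-cycle criterion is clean. The telescoping for necessity is immediate. For sufficiency, the point that requires care---and that you handle correctly---is the equality $S_N=S_0$: encoding it by the pair of zero-weight edges $0\to N$ and $N\to 0$ and then verifying that every simple cycle in the resulting graph either is the trivial weight-$0$ loop or has weight equal to a simple-covering sum $\sum_i b_{p_{i-1}+1,\,p_i}$ is the crux, and your enumeration of cycles is complete (the only backward edge is $N\to 0$, and any forward path $0\to N$ through $b$-edges is exactly a simple covering). One cosmetic point: after obtaining the potentials $S_v$ you should remark that they are defined only up to an additive constant, so one may normalize $S_0=0$; you use this implicitly when setting $x_j=S_j-S_{j-1}$.

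Your alternative sketch via Farkas and the total unimodularity of the consecutive-ones interval matrix is also valid, and is closer in spirit to a direct LP-duality argument; the graph-theoretic version you give first is the more elementary of the two.
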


The following lemma gives a necessary and sufficient conditions for a premeasure in $\LB$ to be $\Lambda$-absolutely continuous.

\begin{lem} \label{lem2}
Let $\mu$ be a $\Lambda$-bounded premeasure.
Then $\mu$ is $\Lambda$-absolutely continuous if and only if there is a positive constant $C>0$ such that for every $\varepsilon>0$  there exists a positive $M$ such that the system 
\begin{equation} \label{1.1}
\left\{
\begin{array}{cll}
x_{k,l}&\leq& M|I_{k,l}|\Lambda (|I_{k,l}|) \\
\mu (I_{k,l})+x_{k,l}&\leq& \min \{C|I_{k,l}|\Lambda (|I_{k,l}|), \varepsilon \} \\
x_{k,l}&=&\underset{s=k}{\overset{l-1}{\sum }}x_{s,s+1} \\
x_{0,N}&=&0
\end{array}
\right.
\end{equation}
in variables $x_{k,l},$ $0\leq k<l\leq N$, has a solution for every positive integer $N$. Here $I_{k,l}$ are the half-open arcs of 
$\TT$ defined by 
$$
I_{k,l}=\Bigl\{e^{i\theta }:  2\pi\frac{ k}{N}\leq \theta < 2\pi \frac{l}{N}\Bigr\}.
$$
\end{lem}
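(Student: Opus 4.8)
The underlying dictionary is that the variables $x_{k,l}$ in \eqref{1.1} should be read as $\nu(I_{k,l})$ for an approximating premeasure $\nu$ sampled on the grid of mesh $2\pi/N$: the first inequality says $\|\nu\|_\Lambda^+\le M$, the identities say $\nu$ is additive with $\nu(\TT)=0$, and the second inequality says that on grid arcs $(\mu+\nu)(I_{k,l})\le C|I_{k,l}|\Lambda(|I_{k,l}|)$ while also $(\mu+\nu)(I_{k,l})\le\varepsilon$. With this in mind I would prove the two implications as follows.

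\textbf{$\Lambda$-absolute continuity $\Rightarrow$ solvability.} Let $(\mu_n)_n$ witness Definition \ref{def1}, put $M:=\sup_n\|\mu_n\|_\Lambda^+<\infty$ and $C:=\|\mu\|_\Lambda^++M$ (fixed, independent of $\varepsilon$). Given $\varepsilon>0$, choose $n$ with $\sup_{I\in\mathcal{B}(\TT)}|(\mu+\mu_n)(I)|<\varepsilon$, and for arbitrary $N$ set $x_{k,l}:=\mu_n(I_{k,l})$. The first inequality of \eqref{1.1} is $\|\mu_n\|_\Lambda^+\le M$; the second follows from $(\mu+\mu_n)(I_{k,l})\le\min\{\|\mu+\mu_n\|_\Lambda^+|I_{k,l}|\Lambda(|I_{k,l}|),\varepsilon\}\le\min\{C|I_{k,l}|\Lambda(|I_{k,l}|),\varepsilon\}$; and the two identities are just finite additivity of $\mu_n$ together with $\mu_n(\TT)=0$. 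So \eqref{1.1} is solvable for every $N$.

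\textbf{Solvability $\Rightarrow$ $\Lambda$-absolute continuity.} Fix the constant $C$ from the hypothesis; for $\varepsilon=1/j$ let $M_j$ be the corresponding constant, and for $N=2^m$ (any sequence $N_m\to\infty$ would do) let $(x^{(m)}_{k,l})$ solve \eqref{1.1}. Let $\nu^{(m)}_j$ be the absolutely continuous premeasure whose distribution function $\overset{\curlywedge}{\nu^{(m)}_j}$ is continuous and piecewise linear with nodes at $2\pi k2^{-m}$ and $\overset{\curlywedge}{\nu^{(m)}_j}(2\pi k2^{-m})=x^{(m)}_{0,k}$; then $\nu^{(m)}_j(I_{k,l})=x^{(m)}_{k,l}$ on grid arcs and $\nu^{(m)}_j(\TT)=x^{(m)}_{0,2^m}=0$. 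The first, purely real-variable, step is to bound $\nu^{(m)}_j$ on \emph{all} arcs, not only grid arcs: writing an arbitrary arc as a chain of grid arcs of geometrically decreasing lengths plus at most two arcs shorter than $2\pi2^{-m}$ and using that $t\Lambda(t)$ is non-decreasing and concave, that $t^\alpha\Lambda(t)$ is non-decreasing and that $\Lambda$ is non-increasing, one gets $\|\nu^{(m)}_j\|_\Lambda^+\le c_\Lambda M_j$ with $c_\Lambda$ depending only on $\Lambda$. Likewise $\rho^{(m)}_j:=\mu+\nu^{(m)}_j$ satisfies $\rho^{(m)}_j(I_{k,l})\le\min\{C|I_{k,l}|\Lambda(|I_{k,l}|),1/j\}$ on grid arcs by the second inequality of \eqref{1.1}, hence $|\rho^{(m)}_j(I_{k,l})|\le 2/j$ there after taking complements and using $\rho^{(m)}_j(\TT)=0$.

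Now let $m\to\infty$. Since $\sup_m\|\nu^{(m)}_j\|_\Lambda^+<\infty$, a standard diagonal selection argument (cf.\ \cite[Chapter 7]{HKZ}) yields a subsequence $\Lambda$-weakly convergent to a premeasure $\nu_j$, which by the remark after Definition \ref{def2} satisfies $\|\nu_j\|_\Lambda^+\le c_\Lambda M_j$; put $\rho_j:=\mu+\nu_j$. Fix an arc $I=[a,b)$ whose endpoints are continuity points of $\overset{\curlywedge}{\rho_j}$ (these are dense, as $\overset{\curlywedge}{\mu}$ and $\overset{\curlywedge}{\nu_j}$ have only countably many discontinuities). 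For $m$ with $\eta_m:=2\pi2^{-m}<\tfrac12|I|$, decompose $I=I^*_m\sqcup(\text{two arcs of length}<\eta_m)$ with $I^*_m$ a grid arc; then
$$\rho^{(m)}_j(I)\le C\,|I^*_m|\,\Lambda(|I^*_m|)+E_m,\qquad |\rho^{(m)}_j(I)|\le\frac{2}{j}+E_m,\qquad E_m:=2(\|\mu\|_\Lambda^++c_\Lambda M_j)\,\eta_m\Lambda(\eta_m).$$
Letting $m\to\infty$ along the chosen subsequence, $E_m\to 0$ because $t\Lambda(t)\to 0$, $|I^*_m|\to|I|$, and $\rho^{(m)}_j(I)\to\rho_j(I)$ because the endpoints of $I$ are continuity points; hence $\rho_j(I)\le C|I|\Lambda(|I|)$ and $|\rho_j(I)|\le 2/j$. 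Finally, every arc is a left-increasing limit of arcs with continuity-point endpoints and $\overset{\curlywedge}{\rho_j}$ is left-continuous, so both estimates persist for all arcs: $\|\rho_j\|_\Lambda^+\le C$ and $\sup_{I\in\mathcal{B}(\TT)}|(\mu+\nu_j)(I)|\le 2/j$. Therefore $\|\nu_j\|_\Lambda^+\le C+\|\mu\|_\Lambda^+$ for all $j$, and $(\nu_j)_j$ is the sequence required by Definition \ref{def1}.

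\textbf{Main obstacle.} The delicate part is passing from the finite grids to a genuine premeasure while keeping control of the relevant constants. Concretely one must (i) upgrade the $\Lambda$-bound for $\nu^{(m)}_j$ from grid arcs to all arcs — this is exactly where the regularity hypotheses on the majorant ($t\Lambda(t)$ non-decreasing and concave, $t^\alpha\Lambda(t)$ non-decreasing, $\Lambda$ non-increasing) are used — and (ii) transfer the grid smallness $|\rho^{(m)}_j(I_{k,l})|\le 2/j$ to all arcs in the limit, using only the \emph{uniform} constant $C$, so that $\|\nu_j\|_\Lambda^+$ stays bounded; the non-uniform $M_j$ is harmless because it enters solely through the error terms $E_m$, which vanish as the mesh tends to $0$ for each fixed $j$.
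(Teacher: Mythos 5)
Your argument is essentially the paper's. The forward direction is identical (take $x_{k,l}=\mu_n(I_{k,l})$ for $n$ large), and in the converse direction the paper likewise converts a solution of \eqref{1.1} into the premeasure with constant density $x_{s,s+1}/|I_{s,s+1}|$ on each mesh arc (the same object as your piecewise-linear distribution function), proves a $\Lambda$-bound on \emph{all} arcs by a shorter case analysis ($I$ inside one mesh arc, $I$ a grid arc, otherwise the largest grid arc plus two edge pieces, giving the constant $3M$), invokes the Helly-type selection theorem of Cyphert--Kelingos, transfers the two grid inequalities to the limit premeasure, and finally treats arcs containing the grid origin by writing $I=I_1\sqcup\{1\}\sqcup I_2$ and using $(\mu+\nu)(\{1\})\le 0$. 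Two small repairs on your side: arcs that wrap past the point $1$ are never approximated by grid arcs, so you need this same splitting for them (your ``continuity endpoints'' argument does not cover that case); and bounding $|\rho^{(m)}_j|$ on the two edge pieces requires the complement trick, so your intermediate estimate comes out as roughly $6/j+E_m$ rather than $2/j+E_m$ --- harmless, since it still tends to $0$.

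The step that genuinely fails is the final inference $\|\nu_j\|^+_\Lambda\le C+\|\mu\|^+_\Lambda$. From $(\mu+\nu_j)(I)\le C|I|\Lambda(|I|)$ you only get $\nu_j(I)\le C|I|\Lambda(|I|)-\mu(I)$, and the one-sided norm $\|\mu\|^+_\Lambda$ gives no lower bound on $\mu(I)$: in general $-\mu(I)=\mu(\TT\setminus I)$ is only $O(1)$, not $O(|I|\Lambda(|I|))$. The failure is not merely formal: if $d\mu=(c-g)\,dm$ with $g\ge 0$ integrable, $c=\int_\TT g\,dm$, and $\int_0^t g\,dm\asymp 1/\log(1/t)$ near $\theta=0$, then $\mu\in\LB$ and the solvability hypothesis of the lemma holds (truncate the density to produce the $x_{k,l}$, with a uniform $C$ and $\varepsilon$-dependent $M$), yet any premeasures $\nu_j$ with $\sup_I|(\mu+\nu_j)(I)|\le 2/j$ satisfy $\nu_j(I)\ge-\mu(I)-2/j$ and hence $\|\nu_j\|^+_\Lambda\to\infty$; so no uniform bound on $\|\nu_j\|^+_\Lambda$ can be extracted at this point. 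The paper does not attempt one: it records only that each limiting $\nu=\nu_\varepsilon$ is $\Lambda$-bounded (with the $\varepsilon$-dependent bound $3M(\varepsilon)$) together with the uniform estimates $(\mu+\nu)(J)\le\min\{C|J|\Lambda(|J|),\varepsilon\}$ and $|(\mu+\nu)(J)|\le 2\varepsilon$; the uniformity the construction genuinely yields lives on $\mu+\nu_j$, not on $\nu_j$, and that is the form in which $\Lambda$-absolute continuity is invoked afterwards. So delete that last sentence and state your conclusion in this form; as written it claims more than the hypotheses can deliver.
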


\begin{proof} Suppose that $\mu$ is $\Lambda$-absolutely continuous and denote by $\{\mu_{n}\}$ a sequence of $\Lambda$-bounded premeasures satisfying the conditions of Definition~\ref{ab-continuous}. Set
$$
C=\sup_n\|\mu+\mu_{n}\|^{+}_{\Lambda}, \qquad M=\sup_n\|\mu _{n}\|^{+}_{\Lambda},
$$
and let $\varepsilon>0$. For large $n$, the numbers $x_{k,l}=\mu_{n}(I_{k,l})$, $0\leq k<l\leq N$, satisfy relations \eqref{1.1} for all $N$.

Conversely, suppose that for some $C>0$ and for every $\varepsilon>0$ there exists $M=M(\varepsilon)>0$ 
such that for every $N$ there are $\{x_{k,l}\}_{k,l}$ (depending on $N$) satisfying relations \eqref{1.1}.
We consider the measures $d\mu_N$ defined on $I_{s,s+1}$, $0\le s<N$, by 
$$
d\mu_N(\xi)=\frac{x_{s,s+1}}{|I_{s,s+1}|}|d\xi|,
$$
where $|d\xi|$ is normalized Lebesgue measure on the unit circle $\TT$. 
To show that $\mu_{N}\in\LB$, it suffices to verify that the quantity
$\sup_{I}\frac{\mu(I)} {|I|\Lambda(|I|)}$ is finite for every interval $I\in\mathcal{B}(\TT)$.
Fix $I \in\mathcal{B}(\TT)$ such that $1\notin I$.

If $I\subset I_{k,k+1}$, then
$$
\mu_N(I)=\frac{x_{k,k+1}}{|I_{k,k+1}|}|I|
\le\frac{x_{k,k+1}}{|I_{k,k+1}|\Lambda( | I_{k,k+1}|) }| I|\Lambda( |I|)
\le M|I|\Lambda( |I|) \text{.}
$$

If $I=I_{k,l}$, then
$$
\mu _{N}(I_{k,l})=\underset{s=k}{\overset{l-1}{\sum }}\mu _{N}(I_{s,s+1})=
\underset{s=k}{\overset{l-1}{\sum }}x_{s,s+1}=x_{k,l}\leq M|I_{k,l}|\Lambda (
|I_{k,l}|).
$$

Otherwise, denote by $I_{k,l}$ the largest interval such that $I_{k,l}\subset I$. We have
\begin{eqnarray*}
\mu _{N}(I) &=&\mu _{N}(I_{k,l})+\mu _{N}(I\setminus I_{k,l}) \\
&\leq &M|I_{k,l}|\Lambda (|I_{k,l}|) +\max(x_{k-1,k},0)+\max(x_{l,l+1},0)
\\
&\leq &3M|I_{k,l}|\Lambda (|I_{k,l}|) \leq 3M|I|\Lambda(
|I|).
\end{eqnarray*}

Thus, $\mu_N$ is a $\Lambda$-bounded premeasure. Next, using a Helly-type selection theorem for premeasures 
due to Cyphert and Kelingos \cite[Theorem 2]{CK}, 
we can find a $\Lambda$-bounded premeasure $\nu$ and a subsequence $\mu_{N_{k}}\in\LB$ such that 
$\{\mu_{N_{k}}\}_k$ converge $\Lambda$-weakly to $\nu$. Furthermore, $\nu$ satisfies the following conditions:

$\nu (J)\leq 3M|J|\Lambda(|J |)$ and
$\mu (J)+\nu (J)\leq \min \{C|J|\Lambda(|J|),\varepsilon \}$ 
for every interval $J\subset \TT\setminus\{1\}$.

Now, if $I$ is an interval containing the point $1$, we can represent it as $I=I_{1}\sqcup \{1\} \sqcup I_{2}$, for some 
(possibly empty) intervals $I_{1}$ and $I_{2}$. Then 
\begin{eqnarray*}
\mu(I)+\nu(I) & =&(\mu+\nu)(I_{1})+(\mu+\nu)(I_{2})+(\mu+\nu)(\{1\})\\
&\leq & (\mu+\nu)(I_{1})+(\mu+\nu)(I_{2}).
\end{eqnarray*}
Therefore, for every $I\in\mathcal{B}(\TT)$ we have $\mu (J)+\nu (J)\le 2\varepsilon$.
Since $(\mu+\nu)(\TT\setminus I)=-\mu(I)-\nu(I)$, we have 
$$
|\mu (J)+\nu (J)|\le 2\varepsilon.
$$
Thus $\mu$ is $\Lambda$-absolutely continuous.
\end{proof}

\begin{lem}\label{deduce}
Let $\mu\in \LB$ be not $\Lambda$-absolutely continuous. Then for every $C>0$ there is $\varepsilon>0$ such that for all $M>0$, 
there exists a simple covering of $\TT$ by a finite number of half-open intervals $\{I_{n}\}_{n}$, satisfying the relation 
$$
\sum_{n}\min \left\{ \mu (I_{n})+M|I_{n}|\Lambda(|I_{n}|) ,C|I_{n}|\Lambda (|I_{n}|),\varepsilon \right\} <0.
$$
\end{lem}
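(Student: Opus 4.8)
The plan is to derive this as the logical contrapositive of the characterization in Lemma~\ref{lem2}. That lemma says $\mu$ is $\Lambda$-absolutely continuous if and only if there exists $C>0$ such that for every $\varepsilon>0$ there is $M>0$ with the system \eqref{1.1} solvable for all $N$. Negating this statement: for every $C>0$ there exists $\varepsilon=\varepsilon(C)>0$ such that for every $M>0$ there is some $N$ for which the system \eqref{1.1} has \emph{no} solution. So I fix $C>0$, obtain the corresponding $\varepsilon>0$, fix an arbitrary $M>0$, and obtain an $N$ for which \eqref{1.1} is infeasible. The task is then to translate "infeasibility of \eqref{1.1}" into the existence of a simple covering with negative sum.

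The next step is to recognize \eqref{1.1} as an instance of the linear programming system in Lemma~\ref{lem1}. After using the substitution $x_{k,l}=\sum_{s=k}^{l-1}x_{s,s+1}$ to eliminate all but the $N$ "edge" variables $x_{s,s+1}$ ($0\le s<N$), the constraints become, for each arc $I_{k,l}$,
$$
\sum_{s=k}^{l-1} x_{s,s+1}\ \le\ b_{k,l}:=\min\bigl\{M|I_{k,l}|\Lambda(|I_{k,l}|),\ C|I_{k,l}|\Lambda(|I_{k,l}|)-\mu(I_{k,l}),\ \varepsilon-\mu(I_{k,l})\bigr\},
$$
together with $\sum_{s=0}^{N-1}x_{s,s+1}=0$ (this is the condition $x_{0,N}=0$). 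This is exactly the system of Lemma~\ref{lem1} with the $N$ variables indexed by consecutive arcs and the right-hand sides $b_{k,l}$ as above. By Lemma~\ref{lem1}, infeasibility of this system is equivalent to the existence of a simple covering $\mathcal{P}=\{[k_n,l_n]\}_n$ of $[1,N]$ (equivalently, of $\TT$ by the corresponding half-open arcs $I_n:=I_{k_n-1,l_n}$, reindexing so the arcs partition $\TT$ up to endpoints) with $\sum_n b_{k_n,l_n}<0$.

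It then remains to unwind the definition of $b_{k_n,l_n}$. Writing each arc as $I_n$, we have
$$
b_{k_n,l_n}=\min\bigl\{M|I_n|\Lambda(|I_n|),\ C|I_n|\Lambda(|I_n|)-\mu(I_n),\ \varepsilon-\mu(I_n)\bigr\}
=\min\bigl\{M|I_n|\Lambda(|I_n|),\ C|I_n|\Lambda(|I_n|),\ \varepsilon\bigr\}-\mu(I_n)?
$$
This last step is not literally an identity, so I would instead just keep $\mu(I_n)$ pulled out of the second and third entries: since $b_{k_n,l_n}=\min\{M|I_n|\Lambda(|I_n|),\,C|I_n|\Lambda(|I_n|)-\mu(I_n),\,\varepsilon-\mu(I_n)\}$ and the quantity $M|I_n|\Lambda(|I_n|)$ equals $(\mu(I_n)+M|I_n|\Lambda(|I_n|))-\mu(I_n)$, we get
$$
b_{k_n,l_n}=\min\bigl\{\mu(I_n)+M|I_n|\Lambda(|I_n|),\ C|I_n|\Lambda(|I_n|),\ \varepsilon\bigr\}-\mu(I_n).
$$
Summing over $n$ and using that $\{I_n\}$ covers $\TT$ simply, so $\sum_n \mu(I_n)=\mu(\TT)=0$ by additivity of the premeasure (property (1) and (2) of Definition~\ref{kappamesure}), we obtain
$$
\sum_n \min\bigl\{\mu(I_n)+M|I_n|\Lambda(|I_n|),\ C|I_n|\Lambda(|I_n|),\ \varepsilon\bigr\}=\sum_n b_{k_n,l_n}<0,
$$
which is exactly the claimed inequality.

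The main obstacle, and the point requiring the most care, is the bookkeeping between the two indexings: Lemma~\ref{lem1} is phrased for intervals $[k,l]\subset[1,N]$ of the integer lattice, while the arcs $I_{k,l}$ in Lemma~\ref{lem2} are indexed by $0\le k<l\le N$. I need to set up the bijection between consecutive integers $1,\dots,N$ and the elementary arcs $I_{s,s+1}$ so that a lattice interval $[k_n,l_n]$ corresponds to the arc $I_{k_n-1,l_n}$, check that a simple covering of $[1,N]$ becomes a simple covering of $\TT$ by half-open arcs (with the reindexing that the arcs $\{I_n\}$ genuinely partition $\TT$ up to finitely many points, so additivity gives $\sum_n\mu(I_n)=0$), and confirm that the "$\min$" in $b_{k,l}$ combining the three constraints of \eqref{1.1} is the correct one — i.e., that a violated constraint in \eqref{1.1} corresponds precisely to the active entry of $b_{k,l}$. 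Everything else is a direct substitution, and the convexity/monotonicity hypotheses on $\Lambda$ play no role here; they were used only in Lemma~\ref{lem2}.
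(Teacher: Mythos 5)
Your proof is correct and follows essentially the same route as the paper: negate Lemma~\ref{lem2} and feed the resulting infeasible system into the linear programming Lemma~\ref{lem1}. The only cosmetic difference is that the paper absorbs $\mu$ into the variables via $X_j=\mu(I_{j,j+1})+x_{j,j+1}$ before invoking Lemma~\ref{lem1} (so the right-hand sides are already the desired minima), whereas you keep the original variables and pull $\mu(I_n)$ out of the minimum afterwards using $\sum_n\mu(I_n)=\mu(\TT)=0$, which is legitimate since a simple covering here is a partition of $\TT$ into half-open arcs.
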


\begin{proof} By Lemma~\ref{lem2}, for every $C>0$ there exists a number $\varepsilon>0$ such that for all $M>0$, 
the system \eqref{1.1} has no solutions for some $N\in\N$. In other words, there are no $\{x_{k,l}\}_{k,l}$ such that:
\begin{equation}
\label{x46}
\overset{l-1}{\underset{s=k}{\sum }}\mu( I_{s,s+1})
+x_{s,s+1}\leq \min \big\{ \mu (I_{k,l})+M|I_{k,l}|\Lambda (|
I_{k,l}|) ,C|I_{k,l}|\Lambda ( |I_{k,l}|)
,\varepsilon \big\} 
\end{equation}
with $x_{k,l}=\underset{s=k}{\overset{l-1}{\sum }}
x_{s,s+1}$ and $x_{0,N}=0$.

We set $X_{j}=\mu (I_{j,j+1})+x_{j,j+1}$, and
$$
b_{k,l}=\min \big\{ \mu (I_{k,l+1})+M|I_{k,l+1}|\Lambda (|I_{k,l+1}|) ,C|I_{k,l+1}|\Lambda ( |I_{k,l+1}|) ,\varepsilon\big\}.
$$
Then relations \eqref{x46} are rewritten as
$$
\underset{j=k}{\overset{l}{\sum}}X_{j}\leq b_{k,l}, \qquad 0\leq k<l\leq N-1.
$$
Therefore, we are in the conditions of Lemma~\ref{lem1} with variables $X_{j}$. We conclude that there is a simple covering of 
the circle $\TT$ by a finite number of half-open intervals $\{I_{n}\}$ such that
$$\sum_{n}\min \big\{ \mu (I_{n})+M|I_{n}|\Lambda(|
I_{n}|) ,C|I_{n}|\Lambda (|I_{n}|)
,\varepsilon \big\} <0.
$$
\end{proof}

In the following lemma we give a normal families type result for the  $\Lambda$-Carleson sets.

\begin{lem}\label{normal.f}
Let $\{F_n\}_{n}$ be a sequence of sets on the unit circle, and let each $F_n$ be a finite union of closed intervals. We assume that
\begin{enumerate}
\item[(i)] $|F_{n}|\to 0$,\qquad $n\to\infty$,
\item[(ii)] $Entr_{\Lambda} (F_{n})=O(1)$,\qquad $n\to\infty$.
\end{enumerate}
Then there exists a subsequence $\{F_{n_{k}}\}_{k}$ and a $\Lambda$-Carleson set $F$ such that:

for every $\delta >0$ there is a natural number $N$ with
\begin{enumerate}
\item[(a)] $F_{n_{k}}\subset F^{\delta }$

\item[(b)] $F\subset F_{n_{k}}^{\delta }.$
\end{enumerate}
for all $k\ge N$.
\end{lem}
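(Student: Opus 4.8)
The plan is to pass to a subsequence that converges in the Hausdorff metric and then to check that the limit $F$ is a $\Lambda$-Carleson set; conditions (a)--(b) are nothing but Hausdorff convergence, so everything reduces to proving $|F|=0$ and $Entr_\Lambda(F)<\infty$. One must use hypothesis (ii) for both — not only for the entropy bound but, less obviously, also to force $|F|=0$: the sets $F_n=\{2\pi j/n:\ 0\le j<n\}$ satisfy (i) but not (ii), and their Hausdorff limit is all of $\TT$, so some uniform thinness of the $F_n$ is genuinely needed. Concretely, I would first invoke compactness of the family of non-empty closed subsets of $\TT$ for the Hausdorff distance $d_H$: along a subsequence $\{F_{n_k}\}$ we get $d_H(F_{n_k},F)\to0$ for some non-empty closed $F\subset\TT$, which by definition of $d_H$ means exactly that for each $\delta>0$ there is $N$ with $F_{n_k}\subset F^\delta$ and $F\subset F_{n_k}^\delta$ for $k\ge N$. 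Set $C_0=\sup_n Entr_\Lambda(F_n)<\infty$ and write $\TT\setminus F=\bigcup_m J_m$.

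The core of the argument is a correspondence between the long complementary arcs of $F_{n_k}$ and those of $F$, valid for $k\ge N(\delta)$. For an arc $I$ let $I^{(\delta)}$ be the set of points of $I$ at distance $>\delta$ from its endpoints, so $|I^{(\delta)}|=|I|-2\delta$ when $|I|>2\delta$. The elementary but crucial remark is that a point deep inside a complementary arc is far from the set: if $\hat I$ is a complementary arc of $F_{n_k}$ and $x\in\hat I^{(\delta)}$, then $\dist(x,F_{n_k})=\dist(x,\partial\hat I)>\delta$, hence $x\notin F_{n_k}^\delta$ and so $x\notin F$ by (b); therefore $\hat I^{(\delta)}$, being a connected subset of $\TT\setminus F$, lies in a single $J_m$, which for $|\hat I|>3\delta$ we denote $\phi(\hat I)$, and then $|\phi(\hat I)|\ge|\hat I|-2\delta$. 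Running the same reasoning in the other direction, any $y\in F_{n_k}\cap J_m$ satisfies $\dist(y,F)=\dist(y,\partial J_m)\le\delta$ (using (a)), so $F_{n_k}$ meets $J_m$ only inside its two $\delta$-collars; consequently $J_m^{(\delta)}$ lies in a single complementary arc of $F_{n_k}$, which for $|J_m|>5\delta$ has length $>3\delta$ and is sent back to $J_m$ by $\phi$. Finally, $\phi$ is injective on $\{\hat I:\ |\hat I|>3\delta\}$: if two such arcs shared the image $J_m$, a point of $F_{n_k}$ would separate $\hat I_1^{(\delta)}$ from $\hat I_2^{(\delta)}$ inside $J_m$, hence would lie in one of the two $\delta$-collars of $J_m$, which is impossible since $\hat I_1^{(\delta)}$ and $\hat I_2^{(\delta)}$ each have length $>\delta$. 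Thus $\phi$ injects $\{\hat I:\ |\hat I|>3\delta\}$ into $\{J_m\}$ with $|\phi(\hat I)|\ge|\hat I|-2\delta$, its image containing every $J_m$ with $|J_m|>5\delta$, the corresponding preimage having length $\ge|J_m|-2\delta$.

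Both required facts then drop out. For the entropy, given finitely many $J_{m_1},\dots,J_{m_q}$ and $\delta<\tfrac15\min_i|J_{m_i}|$, the distinct preimages $\phi^{-1}(J_{m_i})$ together with the monotonicity of $t\mapsto t\Lambda(t)$ give $\sum_i(|J_{m_i}|-2\delta)\Lambda(|J_{m_i}|-2\delta)\le Entr_\Lambda(F_{n_k})\le C_0$; letting $\delta\to0$ by continuity of $t\mapsto t\Lambda(t)$, then taking the supremum over finite subfamilies, yields $Entr_\Lambda(F)\le C_0$. For the measure, since $\Lambda$ is non-increasing and $t\Lambda(t)$ non-decreasing, the complementary arcs of $F_{n_k}$ of length $\le3\delta$ have total length at most $C_0/\Lambda(3\delta)$ and there are at most $C_0/(3\delta\Lambda(3\delta))$ of length $>3\delta$; hence, by injectivity of $\phi$ and $\sum_m|J_m|=1-|F|$,
$$1-|F_{n_k}|=\sum_{\hat I}|\hat I|\le\Bigl((1-|F|)+2\delta\cdot\frac{C_0}{3\delta\Lambda(3\delta)}\Bigr)+\frac{C_0}{\Lambda(3\delta)}=(1-|F|)+\frac{5C_0}{3\Lambda(3\delta)}.$$
Letting $k\to\infty$ (so $|F_{n_k}|\to0$) and then $\delta\to0$ (so $\Lambda(3\delta)\to\infty$) forces $|F|=0$. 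Thus $F\in\mathcal{C}_\Lambda$, and (a)--(b) hold by construction.

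The main obstacle is the construction of the correspondence $\phi$ and, above all, proving that it is injective and that its image captures all long complementary arcs of $F$. The combinatorial structures of the arcs of $F$ and of $F_{n_k}$ are a priori unrelated and get tied together only through the neighbourhood inclusions (a)--(b) and the ``deep points are far'' observation; carefully bookkeeping the various $\delta$-collars while doing this is the delicate part. Everything else — the compactness selection and the two final estimates — is routine once $\phi$ is in hand.
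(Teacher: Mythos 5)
Your proof is correct, but it takes a genuinely different route from the paper's. The paper never invokes Hausdorff--metric compactness as a black box: it works directly with the complementary arcs of $F_n$, first showing via the entropy bound, $|F_n|\to 0$ and $\Lambda(0^{+})=+\infty$ that the largest complementary arc stays bounded away from zero in length, then extracting nested subsequences along which the first, second, third, \dots\ largest complementary arcs converge to limit arcs $J_1,J_2,\dots$, and using the estimate $\Lambda(|J_l|)\le \sup_n Entr_{\Lambda}(F_n)\big/\bigl(1-\sum_{k<l}|J_k|\bigr)$ together with a diagonal argument to get $\sum_j|J_j|=1$; the limit set is then defined as $F=\TT\setminus\bigcup_j J_j$, its Carleson property comes essentially from the construction (limits of the arcs and of their entropies), and properties (a)--(b) are simply read off from the convergence of the arcs. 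You instead obtain the subsequence, the closed limit set $F$, and (a)--(b) in one stroke from the Blaschke selection theorem, and then do the work afterwards: the correspondence $\phi$ between long complementary arcs of $F_{n_k}$ and complementary arcs of $F$, its injectivity and its surjectivity onto the long arcs of $F$, which give $Entr_\Lambda(F)\le \sup_n Entr_\Lambda(F_n)$ and, combined with the counting of short arcs, $|F|=0$. Your organization buys an explicit verification of (a)--(b) and of the measure and entropy statements for the limit set (points the paper treats as automatic), at the cost of the $\delta$-collar bookkeeping; the paper's arc-by-arc extraction makes the structure of $F$ and the entropy control more transparent but leaves those verifications implicit. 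Only routine points deserve tightening in your write-up: keep the normalization of $|\cdot|$ consistent with the metric defining $F^{\delta}$ (a harmless constant), note that the $F_n$, and hence $F$, are non-empty so that both the selection theorem and the definition of a $\Lambda$-Carleson set apply, and remark that your final measure inequality also excludes the degenerate case $F=\TT$, so the complementary arcs $J_m$ you work with do exist.
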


\begin{proof}
Let $\{I_{k,n}\}_k$ be the complementary arcs to $F_{n}$ such that $|I_{1,n}|\ge|I_{2,n}|\ge\ldots$.
We show first that the sequence $\{|I_{1,n}|\}_{n}$ is bounded away from zero.  
Since the function $\Lambda$ is non-increasing, we have
\begin{equation*}
Entr_{\Lambda}(F_{n})=\sum_{k}|
I_{k,n}|\Lambda (|I_{k,n}|)\geq |
\TT\setminus F_{n}|\Lambda (|
I_{1,n}|),
\end{equation*}
and therefore,
\begin{equation*}
\frac{Entr_{\Lambda} (F_{n})}{|
\TT\setminus F_{n}|}\geq \Lambda (|
I_{1,n}|).
\end{equation*}
Now the conditions (i) and (ii) of lemma and the fact that $\Lambda(0^{+})=+\infty$ imply that the sequence $\{|I_{1,n}|\}_{n}$ is bounded away from zero.

Given a subsequence $\{F_{k}^{(m)}\}_k$ of $F_{n}$, we denote by $(I_{j,k}^{{(m) }})_{j}$ the complementary arcs to $F_k^{(m)}$. 
Let us choose a subsequence $\{F_k^{(1)}\}_k$ such that 
$$
I_{1,k}^{{(1) }}=(a_k^{(1)},b_k^{(1)}) \to (a^1,b^1)=J_1
$$ 
as $k\to +\infty$, where $J_{1}$ is a non-empty open arc.

If $|J_{1}|=1$, then $F=\TT\setminus J_{1}$ is a $\Lambda$-Carleson set, and we are done: we can take $\{F_{n_{k}}\}_{k}=\{F_{k}^{(l)}\}_k$.

Otherwise, if $|J_1|<1$, then, using the above method we show that
$$
\Lambda(|I_{2,k}^{(1)}|)\le\frac{Entr_{\Lambda}(F_{k}^{(1)})}{|\TT\setminus F_{k}^{{(1)}}|-|I_{1,k}^{(1)}|}.
$$

Since $\lim_{k\to +\infty}|\TT\setminus F_{k}^{{(1)}}|-|I_{1,k}^{{(1)}}|=1-|J_{1}|>0$, 
the sequence $\Lambda (|I_{2,k}^{{(1)}}|)$ is bounded, and hence, the sequence $|I_{2,k}^{{(1)}}|$ is bounded away from zero.
Next we choose a subsequence $\{F_{k}^{{(2)}}\}_{k}$ of $\{F_{k}^{{(1)}}\}_{k}$ such that the arcs $I_{2,k}^{(2)}=(a_{k}^2,b_{k}^2)$ tend to $(a^{(2)},b^{(2)})=J_2$, where $J_2$ is a non-empty open arc.
Repeating this process we can have two possibilities.
First, suppose that after a finite number of steps we have $|J_1|+\ldots+|J_m|=1$, and then 
we can take $\{F_{n_{k}}\}_{k}=\{F_{k}^{(m)}\}_k$, 
$$
I_{j,k}^{{(m)}}\to J_j,\qquad 1\le j\le m,
$$ 
as $k\to +\infty$, and $F=\TT\setminus \underset{j=1}{\overset{m}{\cup }}J_j$ is $\Lambda $-Carleson.

Now, if the number of steps is infinite, then using the estimate 
$$
\Lambda(|J_{l}|)\leq \frac{\sup_n \big\{Entr_{\Lambda}(F_{n})\big\}}{1-\sum_{k=1}^{l-1}|J_{k}|},
$$ 
and the fact $|J_m|\to 0$ as $m\to \infty$, we conclude that
$$
\sum_{j=1}^{\infty}| J_j|=1.
$$ 
We can set $\{F_{n_{k}}\}_{k}=\{F_{m}^{(m)}\}_m$, $F=\TT\setminus\bigcup_{j\ge 1} J_j$.

In all three situations the properties (a) and (b) follow automatically.
\end{proof}

\subsection*{Proof of Theorem \ref{theo app }}

First we suppose that $\mu$ is $\Lambda$-absolutely continuous, and prove that $\mu_{s}=0$. 
Choose a sequence $\mu_n$ of $\Lambda$-bounded premeasures satisfying the properties (1) and (2) of Definition~\ref{def1}. 
Let $F$ be a $\Lambda$-Carleson set and let $(I_{n})_n$ be the sequence of the complementary arcs to $F$. Denote by 
$(\mu+\mu_{n})_{s}$ the $\Lambda$-singular part of $\mu +\mu _{n}$.
Then
\begin{eqnarray*}
-(\mu +\mu _{n})_{s}(F)&=& \sum_{k}(\mu +\mu _{n})(I_{k})\\
&=&\sum_{k\leq N}(\mu +\mu _{n})(I_{k})+\sum_{k>N}(\mu +\mu _{n})(I_{k}) \\
&\leq &\sum_{k\leq N}(\mu +\mu _{n})(I_{k})+C\sum_{k>N}
| I_{k}| \Lambda (|I_{k}|)
\end{eqnarray*}
Using the property (2) of Definition~\ref{def1} we obtain that 
$$
-\liminf_{n\to\infty}(\mu +\mu _{n})_{s}(F)\leq C\sum_{k>N} |I_{k}|\Lambda (|I_{k}|).
$$
Since $F \in \mathcal{C}_{\Lambda}$, we have $\sum_{k>N} |I_{k}|\Lambda (|I_{k}|)\to 0$ as $N\to+\infty$,
and hence $\liminf_{n\to\infty}(\mu +\mu _{n})_{s}(F) \ge 0$. Since $(\mu +\mu _{n}) \in \LB$, by Proposition~\ref{prop1} its $\Lambda$-singular part is non-positive. Thus $\lim_{n\to\infty}(\mu +\mu _{n})_{s}(F)=0$ for all $F \in \mathcal{C}_{\Lambda}$,
which proves that $\mu _{s}=0$.
\medskip

Now, let us suppose that $\mu $ is not $\Lambda $-absolutely continuous. We apply Lemma~\ref{deduce} with $C=4\|\mu \|^{+}_{\Lambda}$  and find $\varepsilon >0$ such that for all $M>0$, there is a simple covering of circle $\TT$ by a half-open intervals 
$\{I_1,I_2,\ldots I_N\}$ such that
\begin{equation}\label{eq3}
\sum_n\min \left\{ \mu (I_{n})+M|I_{n}|\Lambda(|I_{n}|),4\|\mu \|^{+}_{\Lambda}|I_{n}|\Lambda (|I_{n}|),\varepsilon \right\} <0.
\end{equation}
Let us fix a number $\rho>0$ satisfying the inequality $\rho \Lambda (\rho )\leq{\varepsilon}/{4\|\mu \|^{+}_{\Lambda}}$. 
We divide the intervals $\{I_{1}, I_{2},\ldots I_{N}\}$ into two groups. The first group  
$\{I_{n}^{(1)}\}_n$ consists of intervals $I_n$ such that
\begin{equation}\label{eq7}
\min \{ \mu (I_{n})+M|I_{n}|\Lambda(
|I_{n}|) ,4\|\mu \|^{+}_{\Lambda}|I_{n}|\Lambda(|I_{n}|) ,\varepsilon \} =\mu (I_{n})+M|I_{n}|\Lambda(|I_{n}|),
\end{equation}
and the second one is $\{ I_n^{(2)}\}_n=\{I_n\}_n\setminus\{I_n^{(1)}\}_n$. 

Using these definitions and the fact that $\Lambda$ is non-increasing, we rewrite inequality \eqref{eq3} as
\begin{multline}
\sum_n \mu ( I_{n}^{(1)})+M \sum_{n} |I_{n}^{(1)}|\Lambda(|I_{n}^{(1)}|) \\<-4\|\mu \|^{+}_{\Lambda} \sum_{n \text{ : }| I_{n}^{(2)}|
<\rho} | I_{n}^{(2)}|\Lambda(| I_{n}^{(2)}|) -\varepsilon \;  \Card\{ n\text{ : }|  I_{n}^{(2)} |\geq\rho\}.\label{sous-aditive}
\end{multline}

Next we establish three properties of these families of intervals. From now on we assume that $M>4\|\mu \|^{+}_{\Lambda}$.

(1) We have $\{I_{n}^{(2)}:|I_{n}^{(2)}|\ge\rho\}\neq \emptyset$. Otherwise, by \eqref{sous-aditive}, we would have 
\begin{eqnarray*}
0&=&\mu(\TT) =\sum_{n} \mu ( I_{n}^{(1)})+\sum_{n} \mu (I_{n}^{(2)})  \\
&\leq&-M\sum_{n}|  I_{n}^{(1)}| \Lambda
(|  I_{n}^{(1)}| )-4\|\mu \|^{+}_{\Lambda}\sum_{n}|
 I_{n}^{(2)}| \Lambda (| I_{n}^{(2)}|
)+ \|\mu \| ^{+}_{\Lambda}\sum_{n}|
 I_{n}^{(2)}| \Lambda (| I_{n}^{(2)}| ) \\
&\leq& -M\sum_{n}|
 I_{n}^{(1)}| \Lambda (| I_{n}^{(1)}| )-3\| \mu \|^{+}_{\Lambda}\sum_{n}
| I_{n}^{(2)}| \Lambda (|
 I_{n}^{(2)}| )<0.
\end{eqnarray*}

(2) We have $\sum_n|I_{n}^{(2)}| \Lambda (|I_{n}^{(2)}| )\leq 2\Lambda (\rho )$.
To prove this relation, we notice first that for every simple covering $\{J_{n}\}_{n}$ of $\TT$, we have 
$$
0=\mu(\TT)=\sum_n\mu(J_n)=\sum_n\mu(J_n)^{+}-\sum_n\mu(J_n)^{-},
$$
and hence, 
$$
\sum_n|\mu(J_n)|=\sum_n\mu(J_n)^{+}+\sum_n\mu(J_n)^{-}=2\sum_n\mu(J_n)^{+}\le 2\|\mu\|^{+}_{\Lambda}\sum_n|J_{n}|\Lambda(|J_n|).
$$
Applying this to our simple covering, we get
$$
\sum_{n} |\mu ( I_{n}^{(1)})|+\sum_{n} |\mu (I_{n}^{(2)})|\leq 2\|\mu\|^{+}_{\Lambda}\underset{n}{\sum}\Big[|I_{n}^{(1)}| \Lambda (| I_{n}^{(1)}|)+|I_{n}^{(2)}| \Lambda (| I_{n}^{(2)}| )\Big],
$$
and hence,
$$
-\underset{n}{\sum}\mu ( I_{n}^{(1)}) \leq 2\|\mu\|^{+}_{\Lambda}\underset{n}{\sum}\Big[|I_{n}^{(1)}| \Lambda (| I_{n}^{(1)}|)+|I_{n}^{(2)}| \Lambda (| I_{n}^{(2)}| )\Big].
$$ 
Now, using \eqref{sous-aditive} we obtain that
$$
M \underset{n}{\sum}|I_{n}^{(1)}| \Lambda (|I_{n}^{(1)}| )+4\|\mu\|^{+}_{\Lambda}\underset{|
 I_{n}^{(2)}| <\rho }{\sum }| I_{n}^{(2)}|
\Lambda (| I_{n}^{(2)}|) \leq 2\|\mu\|^{+}_{\Lambda}\underset{n}{\sum}\Big[|I_{n}^{(1)}| \Lambda (| I_{n}^{(1)}|)+|I_{n}^{(2)}| \Lambda (| I_{n}^{(2)}| )\Big],
$$
and hence,
\begin{equation}\label{eq4}
\Big(M-2\|\mu\|^{+}_{\Lambda}\Big)\underset{n}{\sum}
|I_{n}^{(1)}| \Lambda (|I_{n}^{(1)}| )\leq 2\|\mu\|^{+}_{\Lambda}\bigg[\underset{|
 I_{n}^{(2)}| \geq \rho }{\sum }|I_{n}^{(2)}| \Lambda (| I_{n}^{(2)}|)-\underset{|
 I_{n}^{(2)}| <\rho }{\sum }| I_{n}^{(2)}|
\Lambda (| I_{n}^{(2)}|)\bigg].
\end{equation}
As a consequence, we have
$$
\underset{|I_{n}^{(2)}| <\rho }{\sum }| I_{n}^{(2)}|\Lambda (| I_{n}^{(2)}| )\leq \underset{|I_{n}^{(2)}| \geq \rho }{\sum }|I_{n}^{(2)}| \Lambda (| I_{n}^{(2)}|),
$$
and, finally,
$$
\underset{n}{\sum}|I_{n}^{(2)}| \Lambda (|I_{n}^{(2)}|) \leq 2\underset{|I_{n}^{(2)}| \geq \rho }{\sum }|I_{n}^{(2)}| \Lambda (| I_{n}^{(2)}|)\leq 2\underset{n}{\sum }|I_{n}^{(2)}|\Lambda(\rho)\leq 2\Lambda(\rho).
$$

(3) We have 
$$
\sum_{n}|I_{n}^{(1)}| \Lambda (|I_{n}^{(1)}| )\leq \frac{2\|\mu\|^{+}_{\Lambda}}{M-2\|\mu\|^{+}_{\Lambda}}\cdot \Lambda(\rho).
$$ 
This property follows immediately from \eqref{eq4}.

We set $F_{M}=\bigcup_{n}\overline{I_{n}^{(1)}}$. Inequality \eqref{sous-aditive} and the properties (1)--(3) show that
\begin{enumerate}
\item[(i)] $Entr_{\Lambda} (F_{M})=O(1)$, \quad $M\to\infty$,
\item[(ii)] $|F_{M}|\Lambda (|F_{M}|)\leq\frac{2\|\mu\|^{+}_{\Lambda}}{M-2\|\mu\|^{+}_{\Lambda}}\cdot \Lambda(\rho)$,
\item[(iii)] $\mu (F_{M})\leq -4\|\mu \|^{+}_{\Lambda}\Bigl[\underset{n}{\sum}|I_{n}^{(1)}|\Lambda (|I_{n}^{(1)}|)+\sum_{n \text{ : }|I_{n}^{(2)}|<\rho}|I_{n}^{(2)}|\Lambda (|I_{n}^{(2)}|)\Bigr] -\varepsilon$.
\end{enumerate}
By Lemma~\ref{normal.f} there exists a subsequence $M_{n}\to +\infty$ such that $F^*_n:=F_{M_n}$ (composed of a finite number of closed arcs) converge to a $\Lambda$-Carleson set $F$. More precisely, $F\subset {F^*_n}^{\delta}$ and $F^*_n\subset F^{\delta }$ for every 
fixed $\delta >0$ and for sufficiently large $n$. Furthermore, (iii) yields
\begin{equation}\label{eq5}
\mu (F^*_{n})\leq -4\|\mu \|^{+}_{\Lambda}\bigg[\underset{k}{\sum}|R_{k,n}|\Lambda(|R_{k,n}| )+\sum_{k: |L_{k,n}| <\rho }|L_{k,n}|\Lambda (|L_{k,n}| )\bigg]-\varepsilon,
\end{equation}
where $F^*_n=\bigsqcup_{k}R_{k,n}$ and $\TT\setminus F^*_n=\bigsqcup_{k}L_{k,n}$.

It remains to show that 
$$
\mu_s(F)<0.
$$
Otherwise, if $\mu_s(F)=0$, then by Proposition~\ref{prop} we have
$$
\lim_{\delta\to0} \mu (F^\delta)=0.
$$ 
Modifying a bit the set $F^*_n$, if necessary, we obtain $\lim_{\delta\to0} \mu (F^*_n\cap F^\delta)=0$.
Now we can choose a sequence $\delta_n>0$ rapidly converging to $0$ and a sequence $\{k_n\}$ rapidly converging to $\infty$ 
such that the sets $F_n$ defined by 
$$
F_n=F^*_{k_n}\setminus 
{F^{\delta_{n+1}}}\subset F^{\delta_n}\setminus 
{F^{\delta_{n+1}}},
$$ 
and consisting of a finite number of intervals $\{I_{k,n}\}_k$ satisfy the inequalities
\begin{equation}\label{eq6}
\mu(F_n)\le-4\|\mu\|^{+}_{\Lambda}\bigg[\underset{k}{\sum}|I_{k,n}|\Lambda (|I_{k,n}|) +\underset{k}{\sum}|J_{n,k}|\Lambda(|J_{n,k}|)\bigg]-\varepsilon/2,
\end{equation}
where
$\bigsqcup_k J_{n,k}=(F^{\delta _{n}}\setminus 
{F^{\delta_{n+1}}})\setminus F_n=:G_n$.

We denote by $\mathcal{I}_n$, $\mathcal{J}_n$, and $\mathcal{K}_n$ the systems of intervals that form 
$F_n$, $G_n$, and $F^{\delta_n}$, respectively. Furthermore, we denote by $\mathcal{I}_{0}$ be the system of intervals complementary  
to $F^{\delta_1}$, and we put $\mathcal{S}_n=(\underset{k=1}{\overset{n}{\cup }}\mathcal{I}
_{k})\cup (\underset{k=1}{\overset{n}{\cup }}\mathcal{J}_{n})\cup \mathcal{K}
_{n+1}$. Summing up the estimates on $\mu(F_n)$ in \eqref{eq6} we obtain
\begin{eqnarray}
\label{estimation}\sum_{I\in \mathcal{I}_{0}}| \mu (I)| +
\sum_{I\in \mathcal{S}_{n}}| \mu (I)|  &\geq &
\sum_{i=1}^{n}| \mu (F_{i})|  \nonumber\\
&\geq &4\|\mu\|_{\Lambda}^{+}\sum_{i=1}^{n}\bigg[\underset{k}{\sum}|I_{i,k}|\Lambda (|I_{i,k}|) +\underset{k}{\sum}| J_{i,k}|\Lambda (| J_{i,k}| )\bigg]+n\varepsilon/2 \nonumber \\
&=&4\|\mu\|_{\Lambda}^{+}\sum_{I\in \mathcal{S}_{n}}| I|\Lambda (| I| )-4\|\mu\|_{\Lambda}^{+}\sum_{I\in \mathcal{K}_{n+1}} | I|\Lambda (| I|)+n\varepsilon/2 \nonumber \\
&=&4\|\mu\|_{\Lambda}^{+}\Bigg[\sum_{I\in \mathcal{S}_{n}\cup \mathcal{I}_{0}} | I|\Lambda
(| I| )-\sum_{I\in \mathcal{K}_{n+1}}| I|\Lambda
(| I| )\Bigg] \nonumber \\
&-&4\|\mu\|_{\Lambda}^{+} \sum_{I\in \mathcal{I}_{0}}| I|\Lambda
(| I| )+n\varepsilon/2.
\end{eqnarray}

Notice that 
$$
\sum_{I\in\mathcal{K}_{n+1}}|I|\Lambda(|I|)\le 
\sum_{|J_k|<2\delta_{n+1}}|J_k|\Lambda(|J_k|)+2\delta_{n+1}\Lambda(\delta_{n+1})\cdot \Card\{k:|J_k|\ge 2\delta_{n+1}\},
$$
where $\{J_{k}\}_{k}$, $|J_{1}|\geq|J_{2}|\geq \ldots$ are the complementary arcs to the $\Lambda$-Carleson set $F$. 
Since $\lim_{t\to 0}t\Lambda(t)=0$, we obtain that 
$$
\lim_{n\to+\infty}\sum_{I\in \mathcal{K}_{n+1} }|I|\Lambda (|I|)=0.$$
Thus for sufficiently large $n$, \eqref{estimation} gives us the following relation
$$
\underset{I\in \mathcal{S}_{n}\cup \mathcal{I}_{0}}{\sum }| \mu (I)| \geq
4\|\mu\| ^{+}_{\Lambda}\underset{I\in \mathcal{S}_{n}\cup \mathcal{I
}_{0}}{\sum }|I|\Lambda (|I|)
$$
where $\mathcal{S}_{n}\cup \mathcal{I}_{0}$ is a simple covering of the unit circle. 
However, since $\mu\in \LB$, we have 
$$
\underset{I\in \mathcal{S}_{n}\cup \mathcal{I}_{0}}{\sum }| \mu (I)|=2\underset{I\in \mathcal{S}_{n}\cup \mathcal{I}_{0}}{\sum }
\max(\mu (I), 0)\le 2\|\mu\| ^{+}_{\Lambda}\underset{I\in \mathcal{S}_{n}\cup \mathcal{I
}_{0}}{\sum }|I|\Lambda (|I|).
$$
This contradiction completes the proof of the theorem.\qed
\vspace{1em}

\section{Harmonic functions of restricted growth}

Every bounded harmonic function can be represented via the Poisson integral of its boundary values. 
In the following theorem we show that a large class of real-valued harmonic functions in the unit disk $\DD$ 
can be represented as the Poisson integrals of $\Lambda$-bounded premeasures. Before formulating the main result of this section, let us introduce some notations.

\begin{defi}
Let $f$ be a function in $C^{1}(\TT)$ and let $\mu\in\LB$. We define the integral of the function $f$ with respect to $\mu$ by the formula
$$
\int_{\TT}f\,d\mu=\int^{2\pi}_{0}f(e^{it})\,d\overset{\curlywedge}{\mu}(t).
$$
In particular, we have
$$
\int_{0}^{2\pi }\frac{1-|z|^{2}}{|e^{i\theta }-z|^{2}}\,d\mu ( \theta ) =-\int_{0}^{2\pi}\Big(\frac{\partial}{\partial\theta}\frac{1-|z|^{2}}{|e^{i\theta }-z| ^{2}}\Big)\overset{\curlywedge}{\mu}(  \theta)\,d \theta.
$$
Given a $\Lambda$-bounded premeasure $\mu$ we denote by $P[\mu]$ its Poisson integral:
$$
P[\mu](z)=\int_{0}^{2\pi }\frac{1-|z|^{2}}{|e^{i\theta }-z|^{2}}\,d\mu( \theta).
$$
\end{defi}

\begin{proposition}\label{har-rep}
Let $\mu\in\LB$. The Poisson integral $P[\mu]$ satisfies the estimate
$$
P[\mu](z)\le 10\|\mu\|_{\Lambda}^{+}\Lambda(1-|z|),\qquad z\in\mathbb D.
$$
\end{proposition}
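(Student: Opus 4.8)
The plan is to estimate the Poisson integral $P[\mu](z)$ by converting it, via the integration-by-parts formula defining $\int f\,d\mu$, into an integral against the bounded function $\wm$, and then to exploit the $\Lambda$-boundedness of $\mu$ through a dyadic decomposition of $\T$ around the point closest to $z$. First I would fix $z\in\DD$, write $r=|z|$, $h=1-r$, and (by rotation invariance of the statement, since a rotation of $z$ corresponds to a shift of $\wm$ which does not affect $\|\mu\|^{+}_\Lambda$) assume $z=r>0$, so the Poisson kernel $P_z(\theta)=\frac{1-r^2}{|e^{i\theta}-r|^2}$ peaks at $\theta=0$. The key elementary facts I would record are the standard two-sided bound $P_z(\theta)\asymp \frac{h}{h^2+\theta^2}$ for $|\theta|\le\pi$, and the estimate $|\partial_\theta P_z(\theta)|\leqsim \frac{h|\theta|}{(h^2+\theta^2)^2}$.

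Next I would carry out the main estimate. Using
$$
P[\mu](z)=-\int_0^{2\pi}\bigl(\partial_\theta P_z(\theta)\bigr)\wm(\theta)\,d\theta,
$$
I would split $\T$ into the arc $I_0=\{|\theta|\le h\}$ and the dyadic annuli $A_k=\{2^{k-1}h<|\theta|\le 2^{k}h\}$ for $k\ge 1$ up to $2^{k}h\asymp 1$. On each such arc, $\wm(\theta)=\mu(I_\theta)$ is controlled using $\Lambda$-boundedness: for $\theta$ in an arc of length $\asymp 2^k h$ one has $|\wm(\theta)-\wm(\theta')|\le \|\mu\|^{+}_\Lambda \cdot 2^k h\,\Lambda(2^k h)$ for two points in that arc (writing $\wm(\theta)-\wm(\theta')=\mu$ of the arc between them), and since $\wm(0^+)=0$ we get $|\wm(\theta)|\leqsim \|\mu\|^{+}_\Lambda\sum_{j\le k} 2^j h\,\Lambda(2^j h)\leqsim \|\mu\|^{+}_\Lambda\, 2^k h\,\Lambda(2^k h)$, the last step using that $t\mapsto t\Lambda(t)$ is non-decreasing so the geometric-type sum is dominated by its last term (up to an absolute constant coming from $\Lambda(2^{j}h)/\Lambda(2^{k}h)\le$ something summable — here one uses $t^\alpha\Lambda(t)$ non-decreasing to get $\Lambda(2^jh)\le 2^{(k-j)\alpha}\Lambda(2^kh)$, making $\sum_j 2^jh\Lambda(2^jh)\le 2^kh\Lambda(2^kh)\sum_j 2^{-(k-j)(1-\alpha)}\leqsim 2^kh\Lambda(2^kh)$). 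Then on $A_k$, $\int_{A_k}|\partial_\theta P_z|\,d\theta\leqsim \int_{2^{k-1}h}^{2^kh}\frac{h\theta}{\theta^4}\,d\theta\leqsim \frac1{2^k h}$ (for $k\ge1$), so the contribution of $A_k$ is $\leqsim \|\mu\|^{+}_\Lambda\,\Lambda(2^k h)\le \|\mu\|^{+}_\Lambda\,\Lambda(h)$; for the central arc $I_0$ one bounds $|\wm|\leqsim\|\mu\|^{+}_\Lambda h\Lambda(h)$ and $\int_{I_0}|\partial_\theta P_z|\leqsim \int_0^h \frac{h\theta}{h^4}\,d\theta\leqsim \frac1h$, giving again $\leqsim\|\mu\|^{+}_\Lambda\Lambda(h)$.

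Finally I would sum over $k$. There are $\leqsim \log(1/h)$ annuli, so a naive sum gives $\log(1/h)\Lambda(h)$, which is too weak; the fix is to keep the $\Lambda(2^kh)$ factor and use $\sum_{k\ge 1}\Lambda(2^kh)\leqsim \Lambda(h)$, which follows because $t^\alpha\Lambda(t)$ is non-decreasing: $\Lambda(2^k h)\le 2^{-k\alpha}\Lambda(h)$, and $\sum_{k\ge1}2^{-k\alpha}<\infty$. Adding the central term, we conclude $P[\mu](z)\leqsim \|\mu\|^{+}_\Lambda\Lambda(h)=\|\mu\|^{+}_\Lambda\Lambda(1-|z|)$ with an absolute implied constant; tracking the constants carefully (and perhaps arguing slightly more sharply near the peak) yields the stated bound $10\|\mu\|^{+}_\Lambda\Lambda(1-|z|)$. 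The main obstacle is precisely this last point — getting the summation of the annular contributions to converge to $O(\Lambda(h))$ rather than $O(\log(1/h)\Lambda(h))$, which is exactly where the regularity hypothesis that $t^\alpha\Lambda(t)$ is non-decreasing (equivalently, a doubling-type control $\Lambda(2t)\ge c\Lambda(t)$ in the reverse direction) is essential; a secondary nuisance is bookkeeping the explicit numerical constant $10$, which may require treating the peak region by a direct Poisson-kernel bound rather than the integration-by-parts identity.
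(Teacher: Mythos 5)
There is a genuine gap, and it is at the heart of your argument: the estimate $|\wm(\theta)-\wm(\theta')|\le \|\mu\|^{+}_{\Lambda}\,|J|\Lambda(|J|)$ (where $J$ is the arc between the two points), and hence your bound $|\wm(\theta)|\leqsim \|\mu\|^{+}_{\Lambda}\,2^{k}h\,\Lambda(2^{k}h)$ on the annulus $A_k$, is false. The condition \eqref{bounded} defining $\LB$ is \emph{one-sided} (note the $+$ in $\|\cdot\|^{+}_{\Lambda}$): it gives $\mu(J)\le \|\mu\|^{+}_{\Lambda}|J|\Lambda(|J|)$, but the only lower bound available is $\mu(J)=-\mu(\TT\setminus J)\ge -\|\mu\|^{+}_{\Lambda}|\TT\setminus J|\Lambda(|\TT\setminus J|)$, which for short arcs is of constant order, not $O(|J|\Lambda(|J|))$. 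Concretely, $\mu=m-\delta_{e^{i\pi}}$ (i.e.\ $\tilde\mu=\delta_{e^{i\pi}}$ in the notation of Section 4) lies in $\LB$ with $\|\mu\|^{+}_{\Lambda}\le 1/\Lambda(1)$, yet $\wm$ has a jump of size $1$ at $\theta=\pi$, and $P[\mu](re^{i\pi})=1-\frac{1+r}{1-r}\sim -\frac{2}{1-r}$. This shows more: since your scheme bounds $P[\mu](z)$ by $\int|\partial_\theta P_z|\,|\wm|\,d\theta\ge |P[\mu](z)|$, it would prove a two-sided estimate, which is simply not true for $\Lambda=(\log(1/t))^p$ and the example above; the proposition is intrinsically a one-sided bound. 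The paper's proof avoids this by symmetrizing the integration-by-parts identity: pairing $\theta$ with $2\pi-\theta$ gives
$$
P[\mu](r)=\int_0^{\pi}\frac{2r(1-r^{2})\sin\theta}{(1-2r\cos\theta+r^{2})^{2}}\,\mu([-\theta,\theta))\,d\theta,
$$
where the kernel factor is nonnegative on $[0,\pi]$, so only the available upper bound $\mu([-\theta,\theta))\le\|\mu\|^{+}_{\Lambda}\cdot 2\theta\Lambda(2\theta)$ is used; the conclusion then follows from a split at $\theta=(1-r)/2$, an integration by parts, and the monotonicity of $\Lambda$ and of $t\Lambda(t)$. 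Your dyadic machinery has no analogue of this positivity step, and without it the approach cannot be salvaged.

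A secondary, repairable error: the summation $\sum_{k}\Lambda(2^{k}h)\leqsim\Lambda(h)$ is wrong. Monotonicity of $t^{\alpha}\Lambda(t)$ gives $\Lambda(2^{k}h)\ge 2^{-k\alpha}\Lambda(h)$, i.e.\ the inequality opposite to the one you invoke; for $\Lambda(t)=(\log(1/t))^{p}$, $0<p<1$, the sum over the $\asymp\log(1/h)$ annuli is $\asymp(\log(1/h))^{1+p}\gg\Lambda(h)$. Had the rest been sound, this could be fixed by using the sharper kernel estimate $\int_{A_k}|\partial_\theta P_z|\,d\theta\leqsim 2^{-2k}/h$, which yields per-annulus contributions $\leqsim 2^{-k}\Lambda(2^{k}h)\le 2^{-k}\Lambda(h)$ and a convergent geometric sum using only that $\Lambda$ is non-increasing; but the one-sidedness issue above remains fatal to the proposal as written.
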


\begin{proof} It suffices to verify the estimate on the interval $(0,1)$. Let $0<r<1$. Then 
\begin{eqnarray*}
    P[\mu](r)&=&\int_{0}^{2\pi }\frac{1-r^{2}}{|e^{i\theta }-r|^{2}}\,d\mu(\theta)\\ \\
			 &=&-\int_{0}^{2\pi}\Big[\frac{\partial}{\partial\theta}\bigl(\frac{1-r^{2}}{|e^{i\theta}-r|^{2}}\bigr)\Big]\overset{\curlywedge}{\mu}(\theta)\,d \theta \\ \\
			 &=&\int_{0}^{2\pi}\frac{2r(1-r^{2})\sin\theta}{(1-2r\cos\theta+r^{2})^{2}}\mu(I_{\theta})\,d\theta\\ \\ 
			 &=&\int_{0}^{\pi}\frac{2r(1-r^{2})\sin\theta}{(1-2r\cos\theta+r^{2})^{2}}\mu(I_{\theta})\,d\theta
-\int_{\pi}^{0}-\frac{2r(1-r^{2})\sin\theta}{(1-2r\cos\theta+r^{2})^{2}}\mu(I_{2\pi-\theta})\,d\theta\\ \\
			 &=&\int_{0}^{\pi}\frac{2r(1-r^{2})\sin\theta}{(1-2r\cos\theta+r^{2})^{2}}\Big[\mu(I_{\theta})+\mu([-\theta,0))\Big]\,d\theta\\ \\
			 &=&\int_{0}^{\pi}\frac{2r(1-r^{2})\sin\theta}{(1-2r\cos\theta+r^{2})^{2}}\mu([-\theta,\theta))\,d\theta.
\end{eqnarray*}

Integrating by parts and using the fact that $\Lambda$ is decreasing and $t\Lambda(t)$ is increasing we get
\begin{multline*}
P[\mu](r)\le\|\mu\|_{\Lambda}^{+}\Lambda(1-r)\bigg[(1-r)\int_{0}^{\frac{1-r}{2}}\frac{2r(1-r^{2})\sin\theta}{(1-2r\cos\theta+r^{2})^{2}}d\theta
-\int_{\frac{1-r}{2}}^{\pi}2\theta\Big[\frac{\partial}{\partial\theta}\bigl(\frac{1-r^{2}}{|e^{i\theta}-r|^{2}}\bigr)\Big]\,d\theta\bigg]\\ \\
\le\|\mu\|_{\Lambda}^{+}\Lambda(1-r)\bigg[2(1-r)^{3}\int_{0}^{\frac{1-r}{2}}\frac{d\theta}{(1-r)^4}+\frac{(1-r)(1-r^{2})}{(1-r)^{2}}+2\int_{0}^{\pi}\frac{1-r^{2}}{|e^{i\theta}-r|^{2}}d\theta\bigg]\\ \\
\le 10\|\mu\|_{\Lambda}^{+}\Lambda(1-r).
\end{multline*}
\end{proof}

The following theorem is stated by Korenblum in \cite[Theorem~1, p.~543]{BK4} without proof, 
in a more general situation.

\begin{theo}\label{harmonic}
Let $h$ be a real-valued harmonic function on the unit disk such that $h(0)=0$ and 
\begin{equation*}
h(z)= O(\Lambda (1-|z|)), \qquad |z|\to 1,\,z\in \DD.
\end{equation*}
Then the following statements hold.
\begin{enumerate}
	\item For every open arc $I$ of the unit circle $\TT$ the following limit exists:
$$
\mu(I) =\underset{r\to 1^{-}}{\lim }\mu _{r}(I)
=\underset{r\to 1^{-}}{\lim }\int_{I}h(r\xi) \,|d\xi|<\infty.
$$
  \item $\mu$ is a $\Lambda$-bounded premeasure.
  \item The function $h$ is the Poisson integral of the premeasure $\mu$:
$$
h(z) =\int_{0}^{2\pi }\frac{1-|z|^{2}}{|e^{i\theta }-z|^{2}}\,d\mu (\theta), \qquad z\in \DD.
$$
\end{enumerate}
\end{theo}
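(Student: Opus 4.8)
The plan is to prove the three assertions in sequence, each feeding into the next.

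\textbf{Step 1: Existence of the limit defining $\mu(I)$.} Fix an open arc $I=\{e^{it}:a<t<b\}$. For $0<r<1$ set $\mu_r(I)=\int_a^b h(re^{it})\,dt$. I would show that the family $\{\mu_r(I)\}$ is Cauchy as $r\to 1^-$. The natural tool is the harmonic conjugate: write $h=\re F$ for $F$ holomorphic on $\DD$ with $F(0)=0$, and express $\int_a^b h(re^{it})\,dt$ in terms of the primitive of $F$ along the arc. Concretely, $\frac{d}{dt}\big(\text{something}\big)$ involving $\partial h/\partial t$ relates the integral over two radii $r_1<r_2$ to a contour integral of $\partial F/\partial z$ over the little "annular trapezoid" with vertices $r_1e^{ia},r_2e^{ia},r_2e^{ib},r_1e^{ib}$. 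Alternatively — and this is cleaner — one uses that $h_r(e^{it}):=h(re^{it})$ is the Poisson extension of $h_{r_0}$ for $r<r_0$, plus the growth bound $|h(z)|\le c\Lambda(1-|z|)$, to show $\int_I h_r$ converges. The key estimate is that the difference $\int_I h_{r_2}-\int_I h_{r_1}$ is controlled by $\Lambda(1-r_1)\cdot(\text{length scale})$ near the two endpoints of $I$ plus a term that vanishes; since $(1-r)\Lambda(1-r)\to 0$, Cauchyness follows. One should also verify the limit is finite, which is immediate from the same bound, and additivity over adjacent arcs, which is clear.

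\textbf{Step 2: $\mu$ is a $\Lambda$-bounded premeasure.} Here I must check the three axioms of Definition \ref{kappamesure} plus the growth bound \eqref{bounded}. Property (1), $\mu(\TT)=0$: since $h(0)=0$ and $h$ harmonic, $\int_0^{2\pi}h(re^{it})\,dt=2\pi h(0)=0$ for every $r$, so the limit is $0$. Property (2), finite additivity: inherited from additivity of $\mu_r$ and passing to the limit. Property (3), continuity on vanishing nested intervals: if $|I_n|\to 0$ with $\bigcap I_n=\emptyset$, one shows $\mu(I_n)\to 0$ using that $\mu(I_n)=\lim_r\int_{I_n}h_r$ and the bound below. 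The $\Lambda$-boundedness $\mu(I)\le C|I|\Lambda(|I|)$: for an arc $I$ of length $\ell=|I|$, choose the radius $r=1-\ell$ (roughly), split $\mu(I)=\int_I h_r + (\mu(I)-\int_I h_r)$; the first term is $\le \ell\cdot c\Lambda(\ell)$ directly from the growth hypothesis, and the second — the "tail" of the $r\to 1$ limit past radius $1-\ell$ — is estimated by the same Cauchy-tail argument from Step 1, again producing a bound of order $\ell\Lambda(\ell)$ since the relevant contributions live near scale $\ell$ from the endpoints. I expect the bookkeeping of this endpoint analysis to be the main technical obstacle: one needs the convexity of $\Lambda$, the monotonicity of $t\mapsto t\Lambda(t)$, and the doubling condition \eqref{xf1} to convert the sum of dyadic-scale contributions between radius $1-\ell$ and $1$ into $O(\ell\Lambda(\ell))$.

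\textbf{Step 3: $h=P[\mu]$.} Having produced $\mu\in\LB$, Proposition \ref{har-rep} gives that $P[\mu]$ is a harmonic function on $\DD$ with $P[\mu](z)=O(\Lambda(1-|z|))$ and $P[\mu](0)=\mu(\TT)=0$. Set $u=h-P[\mu]$; it is harmonic, $u(0)=0$, and $u(z)=O(\Lambda(1-|z|))$. The idea is to show $u\equiv 0$ by checking that the premeasure \emph{associated to $u$} via part (1) is the zero premeasure, and then invoking a uniqueness statement. Concretely: for $\rho<1$ fixed, $P[\mu](\rho\,\cdot)$ and $h(\rho\,\cdot)$ both have, as $\rho\to1$, the \emph{same} arc-integrals, namely $\mu(I)$ — for $h$ this is the definition of $\mu$, and for $P[\mu]$ it follows from Fubini and the fact that the Poisson kernel integrates to $1$ over $\TT$, combined with a continuity/approximate-identity argument showing $\int_I P[\mu](\rho e^{it})\,dt\to\mu(I)$ at every arc $I$ whose endpoints are points of continuity of $\wm$ (a co-countable set, which suffices by additivity and monotone approximation). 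Hence the premeasure of $u$ vanishes on all arcs, so $\int_I u(\rho e^{it})\,dt\to0$ for all $I$. Finally, a real harmonic function $u$ with $u(0)=0$, growth $O(\Lambda(1-|z|))$, and all limiting arc-integrals zero must be identically zero: one recovers the Fourier coefficients of $u(\rho\cdot)$ from arc-integrals against trigonometric polynomials (integrating by parts to move derivatives onto the smooth test function and using the growth bound to pass to the limit $\rho\to1$), concluding every Fourier coefficient of $u$ is $0$. This gives $u\equiv0$, i.e. $h=P[\mu]$, completing the proof. $\qquad\square$
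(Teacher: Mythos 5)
Your Step 2 is where the proof has a genuine gap, and it is exactly the heart of the theorem. You propose to take $\ell=|I|$, split $\mu(I)=\int_I h_{1-\ell}+\bigl(\mu(I)-\int_I h_{1-\ell}\bigr)$, and bound the tail "by the same Cauchy-tail argument from Step 1''. Made precise, that argument is the conjugate-function identity $\int_I h_{r_2}-\int_I h_{r_1}=\int_{r_1}^{r_2}\bigl[\tilde h(re^{ib})-\tilde h(re^{ia})\bigr]\frac{dr}{r}$, and the only pointwise control the hypothesis gives on the conjugate $\tilde h$ (via interior gradient estimates and \eqref{xf1}) is $|\tilde h(z)|\le C\int_{1-|z|}^{1}\Lambda(s)\frac{ds}{s}+C$, which is in general much larger than $\Lambda(1-|z|)$: for the Korenblum weight $\Lambda(t)=\log(1/t)$ it is of size $(\log\frac1{1-|z|})^{2}$, and this size is actually attained, e.g.\ by $h=\Re \bigl[i(\log\frac{1}{1-z})^{2}\bigr]$, which satisfies $h(0)=0$ and $h(z)=O(\Lambda(1-|z|))$. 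Estimating the tail by absolute values therefore yields only $C\bigl[\int_0^\ell\Lambda(t)\,dt+\ell\int_\ell^1\Lambda(s)\frac{ds}{s}\bigr]$, i.e.\ of order $\ell(\log(1/\ell))^{2}$ in the Korenblum case, not $O(\ell\Lambda(\ell))$. So the endpoint "bookkeeping'' you defer cannot be done by size alone; one must exploit cancellation in $\theta$. That is precisely what the paper does: it expands $h(re^{i\theta})=\sum_n a_nr^{|n|}e^{in\theta}$, uses $h(0)=0$ and the growth bound to get $|a_n|\le C\Lambda(1/|n|)$, replaces the integral over the arc at radius $r$ by the integral over the "tent'' curve $rq(\theta)e^{i\theta}$, $q=1-\eta^{p}$, which dips into the disk to depth comparable with the distance to the endpoints of $I$ (there the growth bound plus $\int_0^\tau\Lambda(t)\,dt\le C\tau\Lambda(\tau)$, a consequence of $t^{\alpha}\Lambda(t)$ non-decreasing, gives $O(|I|\Lambda(|I|))$), and controls the discrepancy by $O(|I|)$ through the oscillatory estimates $\bigl|\int_\alpha^\beta(1-q(\theta)^{|n|})e^{in\theta}\,d\theta\bigr|\le C_p|I|\,|n|^{-2(1-1/p)}$ with $p$ chosen so that $\alpha+2/p<1$, summed against $\Lambda(1/|n|)$. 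Without this (or an equivalent) cancellation mechanism the uniform bound $\mu_r(I)\le C|I|\Lambda(|I|)$ — hence statement (2), on which (1) and (3) also rest — is not established.

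Two secondary remarks. In Step 1 the mechanism you state ("controlled by $\Lambda(1-r_1)\cdot$ length scale'') is not what the conjugate-function identity gives, but the existence of the limit is still salvageable from it, since $\int_0^1\Lambda(t)\,dt<\infty$ makes the majorant of $\tilde h$ integrable. In Step 3 your uniqueness route needs a proof that $\int_I P[\mu](\rho e^{it})\,dt\to\mu(I)$ at continuity points of $\wm$; the paper sidesteps this by a different route: it extracts, via a Helly-type selection theorem for premeasures, a $\Lambda$-weakly convergent subsequence $\mu_{r_n}\to\mu$ and passes to the limit directly in $h_{r_n}(z)=-\int_0^{2\pi}\frac{\partial}{\partial\theta}\bigl(\frac{1-|z|^{2}}{|e^{i\theta}-z|^{2}}\bigr)\,\wm_{r_n}(\theta)\,d\theta$, obtaining (2) and (3) simultaneously.
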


\begin{proof} Let
$$
h( re^{i\theta }) =\sum_{n=-\infty}^{+\infty}a_{n}r^{|n|}e^{in\theta }.
$$   
Since $a_0=h(0)=0$, we have 
$$
\int_{0}^{2\pi }h^{+}(re^{i\theta })\,d\theta=\int_{0}^{2\pi }h^{-}( re^{i\theta})\,d\theta=
\frac{1}{2}\int_{0}^{2\pi }| h( re^{i\theta})|\,d\theta.
$$
Furthermore,
\begin{eqnarray}
| a_{n}| 
&=&\Big|\frac{r^{-| n| }}{2\pi }\int_{0}^{2\pi } h( re^{i\theta })e^{-in\theta}\,d\theta \Big|\notag\\  
&\le&\frac{r^{-| n| }}{2\pi }\int_{0}^{2\pi }| h( re^{i\theta })|\,d\theta =
\frac{r^{-| n| }}{\pi }\int_{0}^{2\pi}h^{+}( re^{i\theta })\,d\theta  \notag\\
&\le& Cr^{-| n| }\Lambda( 1-r) \notag \\
&\le& C_1\Lambda\bigl(\frac1{|n|}\bigr),\qquad \frac1{|n|}=1-r,\,n\in\mathbb{Z}\setminus\{-1,0,1\}.\label{xf2}
\end{eqnarray}

Let $I=\{e^{i\theta}:\alpha\le\theta\le\beta\}$ be an arc of $\TT$, $\tau=\beta-\alpha$.
For $\theta\in[\alpha,\beta]$ we define 
$$
t(\theta)=\min\{\theta-\alpha,\beta-\theta\}, \qquad 
\eta(\theta)=\frac{1}{\tau}(\beta-\theta)(\theta-\alpha).
$$
Then
\begin{equation*}
\frac{1}{2}\,t(\theta) \le \eta (\theta) \le t( \theta),\qquad |\eta ^{\prime}( \theta)|\le 1,
\qquad \eta''( \theta)= \frac{-2}{\tau},\qquad \theta\in[\alpha,\beta].
\end{equation*}
Given $p>2$ we introduce the function $q(\theta)=1-\eta(\theta)^p$ 
satisfying the following properties:
$$
|q'( \theta )|\le p\eta(\theta)^{p-1},\qquad |q''(\theta)|\le p^2\eta(\theta)^{p-2},\qquad \theta\in(\alpha,\beta).
$$

Integrating by parts we obtain for $|n|\ge1$ and $\tau <1$ that
\begin{multline*}
\Big|\int_{\alpha}^{\beta}(1-q(\theta)^{|n|}) e^{in\theta}\,d\theta\Big|
=\frac1{|n|}\Big|\int_\alpha^\beta|n|q(\theta)^{|n|-1}q'(\theta)e^{in\theta}\,d\theta\Big|\\
\le \frac{|n|-1}{|n|}\int_{\alpha}^{\beta}q(\theta)^{|n|-2}|q'(\theta)|^2\,d\theta+
\frac1{|n|}\int_{\alpha}^{\beta}q(\theta)^{|n|-1}|q''(\theta)|\,d\theta\\
\le 2p^2\int_0^{\tau/2}\Big(1-\Bigl[\frac t2\Bigr]^p\Big)^{|n|-2}t^{2p-2}\,dt
+\frac{2p^2}{|n|}\int_0^{\tau/2}\Big(1-\Bigl[\frac t2\Bigr]^p\Big)^{|n|-1}t^{p-2}\,dt\\
\le C_p\Bigl[\int_0^{\tau/4}\big(1-t^p\big)^{|n|-2}t^{2p-2}\,dt
+\frac1{|n|}\int_0^{\tau/4}\big(1-t^p\big)^{|n|-1}t^{p-2}\,dt\Bigr],
\end{multline*}
and, hence,
\begin{eqnarray*}
\Big|\int_\alpha^\beta(1-q(\theta)^{|n|})e^{in\theta}\,d\theta\Big|
&\leq &
C_{1,p}\tau\max_{0\le t\le 1}\Bigl\{\bigl(1-t^p\bigr)^{|n|-2}t^{2p-2}+\frac{1}{
|n|}\bigl(1-t^p\bigr)^{|n|-1}t^{p-2}\Bigr\}\\
&\leq &C_{2,p}\tau |n|^{-2( 1-\frac{1}{p}) }.
\end{eqnarray*}

On the other hand, we have
$$
\frac 1{2\pi}\int_I h(r\xi)\,|d\xi|=\frac 1{2\pi}\int_\alpha^\beta h(rq(\theta)e^{i\theta})\,d\theta+
\frac 1{2\pi}\int_\alpha^\beta\left[h(re^{i\theta})-h(rq(\theta)e^{i\theta})\right]\,d\theta.
$$
By \eqref{xf2}, we obtain 
\begin{eqnarray*}
\Big|\frac 1{2\pi}\int_\alpha^\beta\big[h(re^{i\theta})-h(rq(\theta)e^{i\theta})\big]\,d\theta\Big|
&\le &\frac 1{2\pi }\sum_{n\in\mathbb Z}|a_n|\Big|\int_\alpha^\beta r^{|n|}(1-q(\theta)^{|n|})e^{in\theta}\,d\theta\Big| \\
&\le &C_{3,p}\tau\sum_{n\in\mathbb Z}|a_n| (|n|+1)^{-2(1-\frac{1}{p})} \\
& \leq &C_{4,p}\tau \sum_{n\in\mathbb Z}\Lambda\Big(\frac{1}{\max(|n|,1)}\Big) 
(|n|+1)^{-2(1-\frac{1}{p}) }.
\end{eqnarray*}

Therefore, if $t\mapsto t^\alpha\Lambda(t)$ increase, and 
\begin{equation}
\alpha+\frac 2p<1,
\end{equation}
then
$$
\Big|\frac 1{2\pi}\int_\alpha^\beta\big[h(re^{i\theta})-h(rq(\theta)e^{i\theta})\big]\,d\theta\Big|\le C_{5,p}\tau.
$$

Since $\Lambda(x^p)\le C_p\Lambda(x)$, we obtain
\begin{eqnarray*}
\Big|\frac 1{2\pi}\int_\alpha^\beta h(rq(\theta)e^{i\theta})\,d\theta\Big|
&\le& C\int_\alpha^\beta\Lambda(1-q(\theta))\,d\theta \\
&\le& C\int_\alpha^\beta\Lambda\bigl(\frac{t(\theta)}2\bigr)\,d\theta\\
&\le& C_1\int_0^{\tau/4}\Lambda(t)\,dt\\
&=& C_1\int_0^{\tau/4}t^{-\alpha}t^\alpha\Lambda(t)\,dt\\
&\le& C_2\tau^\alpha\Lambda(\tau)\int_0^{\tau/4}t^{-\alpha}\,dt\\
&=& C_3\tau\Lambda(\tau).
\end{eqnarray*}
Hence,
\begin{equation*}
\mu_r(I)\le C|I|\Lambda(|I|)
\end{equation*}
for some $C$ independent of $I$.

Given $r\in(0,1)$, we define $h_r(z)=h(rz)$. The $h_{r}$ is the Poisson integral of $d\mu_r=h_r(e^{i\theta})\,d\theta$:
$$
h_r(z)=\int_{\TT}\frac{1-|z|^2}{|e^{i\theta}-z|^2}\,d\mu_r(\theta)
$$
The set $\{\mu_r:r\in(0,1)\}$ is a uniformly $\Lambda$-bounded family of premeasures. Using a Helly-type selection theorem \cite[Theorem~1, p.~204]{BK1}, we can find a sequence of premeasures $\mu_{r_n}\in\LB$ converging weakly to a $\Lambda$-bounded premeasure $\mu$ as $n\to\infty$, 
$\lim_{n\to\infty}r_n=1$. 
Then
\begin{equation*}
\mu(I)\le C|I|\Lambda(|I|)
\end{equation*}
for every arc $I$, and
$$
h_{r_n}(z)=-\int_0^{2\pi}\frac{\partial}{\partial\theta}\Big(\frac{1-|z|^2}{|e^{i\theta}-z|^2}\Big)
\overset{\curlywedge}{\mu}_n(\theta)\,d\theta.
$$
Passing to the limit we conclude that
$$
h(z)=\int_{\TT}\frac{1-|z|^2}{|e^{i\theta}-z|^2}\,d\mu(\theta).
$$
\end{proof}

\section{Cyclic vectors}

Given a $\Lambda$-bounded premeasure $\mu$, we consider the corresponding analytic fuction
\begin{equation}
\label{xdf1}
f_\mu(z)=\exp \int_0^{2\pi }\dfrac{e^{i\theta }+z}{e^{i\theta }-z}\,d\mu (\theta).
\end{equation}
If $\tilde\mu$ is a positive singular measure on the circle $\TT$, we denote by $S_{\tilde\mu}$ the associated singular inner function. Notice that in this case $\mu=\tilde\mu(\mathbb T)m-\tilde\mu$
is a premeasure, and we have 
$S_{\tilde\mu}= f_\mu/S_{\tilde\mu}(0)$; $m$ is (normalized) Lebesgue measure.

Let $f$ be a zero-free function in $\mathcal{A}_{\Lambda}^{-\infty }$ such that $f(0)=1$. According to Theorem \ref{harmonic}, there is a premeasure $\mu_f\in \LB$ such that
$$
f(z)=\exp\int_{0}^{2\pi}\frac{e^{i\theta}+z}{e^{i\theta}-z}\,d\mu_f(\theta).
$$
The following result follows immediately from Theorem \ref{theo app }. 

\begin{theo}\label{uneimplication}
Let $f \in \mathcal{A}_{\Lambda}^{-\infty }$ be a zero-free function such that $f(0)=1$.
If $(\mu_{f})_{s}\equiv 0$, then $f$ is cyclic in $\mathcal{A}_{\Lambda}^{-\infty }$.
\end{theo}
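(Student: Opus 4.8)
The plan is to prove the \emph{a priori} weaker assertion $1\in[f]_{\mathcal A_\Lambda^{-\infty}}$. This suffices: $[f]$ is a closed subspace, and it is $z$-invariant (multiplication by $z$ is continuous on $\mathcal A_\Lambda^{-\infty}$ and carries polynomial multiples of $f$ to polynomial multiples of $f$), so if $1\in[f]$ then $z^k\in[f]$ for all $k$, hence $[f]$ contains the closure of the polynomials, which is all of $\mathcal A_\Lambda^{-\infty}$ (for $g\in\mathcal A_\Lambda^{-c}$ the dilates $z\mapsto g(rz)$ converge to $g$ in $\mathcal A_\Lambda^{-c'}$ for every $c'>c$, by a split-disk argument of the kind used below, and each $g(r\,\cdot)$ is a uniform limit on $\overline{\mathbb D}$ of its Taylor polynomials). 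Thus it is enough to exhibit a sequence in $[f]$ converging to $1$ in $\mathcal A_\Lambda^{-\infty}$.

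By Theorem \ref{harmonic} write $f=f_{\mu_f}$ with $\mu_f\in\LB$, as recalled before the statement. The hypothesis $(\mu_f)_s\equiv0$ together with Theorem \ref{theo app } gives $\Lambda$-bounded premeasures $(\mu_n)_n$ with $M_0:=\sup_n\|\mu_n\|^+_\Lambda<\infty$ and $\varepsilon_n:=\sup_{I\in\mathcal B(\TT)}|(\mu_f+\mu_n)(I)|\to0$. Put $g_n:=f_{\mu_n}$. Since $\mu\mapsto\int_0^{2\pi}\frac{e^{i\theta}+z}{e^{i\theta}-z}\,d\mu(\theta)$ is additive in $\mu$, we have $f\,g_n=f_{\mu_f+\mu_n}$, and because $\|\mu_f+\mu_n\|^+_\Lambda\le M_1:=\|\mu_f\|^+_\Lambda+M_0$, Proposition \ref{har-rep} gives $\log|f\,g_n|=P[\mu_f+\mu_n]\le 10M_1\Lambda(1-|z|)$; hence the whole family $\{f\,g_n\}_n$ lies in the single Banach space $\mathcal A_\Lambda^{-10M_1}$ with norm $\le1$, and similarly $g_n\in\mathcal A_\Lambda^{-10M_0}\subset\mathcal A_\Lambda^{-\infty}$. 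By the density remark above, each $g_n$ is a limit of polynomials in some $\mathcal A_\Lambda^{-c}$; since $f\in\mathcal A_\Lambda^{-L}$ for some $L$ and multiplication by $f$ maps $\mathcal A_\Lambda^{-c}$ continuously into $\mathcal A_\Lambda^{-c-L}$, it follows that $f\,g_n$ is a limit of polynomial multiples of $f$, i.e. $f\,g_n\in[f]$.

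It remains to prove $f\,g_n\to1$ in $\mathcal A_\Lambda^{-\infty}$; I claim this holds in the norm of $\mathcal A_\Lambda^{-N}$ with $N=11M_1$. Write $f\,g_n=e^{\phi_n}$, $\phi_n(z)=\int_0^{2\pi}\frac{e^{i\theta}+z}{e^{i\theta}-z}\,d(\mu_f+\mu_n)(\theta)$, so $\Re\phi_n\le 10M_1\Lambda(1-|z|)$. Fix $\delta\in(0,1)$. On $\{|z|\le1-\delta\}$, integrating by parts in $\theta$ as in the definition of $\int_\TT(\cdot)\,d\mu$ in Section 3 and using $|(\mu_f+\mu_n)(I_\theta)|\le\varepsilon_n$ together with $\bigl|\partial_\theta\frac{e^{i\theta}+z}{e^{i\theta}-z}\bigr|\le 2(1-|z|)^{-2}$, one gets $|\phi_n(z)|\le C_\delta\,\varepsilon_n$ with $C_\delta$ depending only on $\delta$, so $|f\,g_n(z)-1|\le 2C_\delta\varepsilon_n$ there for $n$ large; since $e^{-N\Lambda(1-|z|)}\le1$, the same bound controls $|f\,g_n(z)-1|\,e^{-N\Lambda(1-|z|)}$ on this region. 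On $\{|z|>1-\delta\}$ we have $1-|z|<\delta$, hence $\Lambda(1-|z|)\ge\Lambda(\delta)$, and
$$
|f\,g_n(z)-1|\,e^{-N\Lambda(1-|z|)}\le\bigl(e^{10M_1\Lambda(1-|z|)}+1\bigr)e^{-11M_1\Lambda(1-|z|)}\le 2e^{-M_1\Lambda(1-|z|)}\le 2e^{-M_1\Lambda(\delta)}.
$$
Given $\eta>0$, first choose $\delta$ with $2e^{-M_1\Lambda(\delta)}<\eta$ (possible because $\Lambda(0^+)=+\infty$), then $n$ large enough that $2C_\delta\varepsilon_n<\eta$; this yields $\|f\,g_n-1\|_{\mathcal A_\Lambda^{-N}}<\eta$. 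Hence $f\,g_n\to1$ in $\mathcal A_\Lambda^{-\infty}$, and since $[f]$ is closed and $f\,g_n\in[f]$, we get $1\in[f]$, so $f$ is cyclic.

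The parts I expect to be routine are the density of polynomials in $\mathcal A_\Lambda^{-\infty}$ and the continuity of multiplication by $f$. The one point that needs care is the split-disk estimate of the last paragraph: the mechanism is that \emph{uniform} boundedness of $\{f\,g_n\}$ in $\mathcal A_\Lambda^{-10M_1}$ turns, near $\TT$ (where $\Lambda(1-|z|)\to+\infty$), into genuine decay once one allows a slightly larger exponent $N>10M_1$, while on compact subsets of $\DD$ the quantitative smallness $\varepsilon_n\to0$ furnishes the convergence. This is essentially the device used in the Korenblum--Roberts proof of cyclicity of singular inner functions in the Bergman space, now run in the language of premeasures and Theorem \ref{theo app }.
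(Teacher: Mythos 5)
Your proposal is correct and follows essentially the same route as the paper: take the premeasures $\mu_n$ furnished by Theorem \ref{theo app }, set $g_n=f_{\mu_n}$ (which lies in $\mathcal{A}_{\Lambda}^{-\infty}$ by Proposition \ref{har-rep}), and show $fg_n\to 1$ using the uniform bound $\|\mu_f+\mu_n\|_{\Lambda}^{+}\le M_1$ together with $\sup_I|(\mu_f+\mu_n)(I)|\to 0$. The only difference is that you spell out the routine steps the paper leaves implicit (density of polynomials, $fg_n\in[f]_{\mathcal{A}_{\Lambda}^{-\infty}}$, and the split-disk passage from convergence on compacta plus a uniform $\mathcal{A}_{\Lambda}^{-10M_1}$ bound to convergence in $\mathcal{A}_{\Lambda}^{-N}$), and these are carried out correctly.
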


\begin{proof}
Suppose that $(\mu_{f})_s\equiv 0$. By theorem \ref{theo app }, 
$\mu_f$ is $\Lambda$-absolutely continuous. Let $\{\mu_n\}_{n\ge 1}$ 
be a sequence of $\Lambda$-bounded premeasures from Definition~\ref{def1}.
We set
$$
g_n(z)=\exp\int_{0}^{2\pi}\frac{e^{i\theta}+z}{e^{i\theta}-z}d\mu_{n}(\theta),
\qquad z\in\DD.
$$
By Proposition~\ref{har-rep}, $g_n\in \mathcal{A}_{\Lambda}^{-\infty }$, and
\begin{eqnarray*}
f(z)g_n(z)&=&\exp\int_0^{2\pi}\frac{e^{i\theta}+z}{e^{i\theta}-z}\,d(\mu_f+\mu_n)(\theta)\\
&=&\exp\Bigl[-\int_0^{2\pi}\frac{\partial}{\partial\theta}\Big(\frac{e^{i\theta}+z}{e^{i\theta}-z}\Big)[\overset{\curlywedge}{\mu_{n}}( \theta)-\overset{\curlywedge}{\mu}(\theta)]\,
d\theta\Bigr]\\
&=&\exp\Bigl[-\int_0^{2\pi}\frac{\partial}{\partial\theta}\Big(\frac{e^{i\theta}+z}{e^{i\theta}-z}\Big)\big[\mu(I_{\theta})+\mu_{n}(I_{\theta})\big]\,d\theta\Bigl].
\end{eqnarray*}
Again by Definition~\ref{def1}, we obtain that $f(z)g_n(z)\to 1$ uniformly on compact subsets of unit disk $\DD$. This yields that $fg_n\to 1$ in 
$\mathcal{A}_\Lambda^{-\infty}$ as $n\to\infty$.
\end{proof}

From now on, we deal with the statements converse to Theorem \ref{uneimplication}.
We'll establish two results valid for different growth ranges of the majorant $\Lambda$.
More precisely, we consider the following growth and regularity assumptions:
\begin{gather}
\tag{C1} \text{for every\ }c>0, \text{\ the function\ }x\mapsto\exp\bigl[c\Lambda(1/x)\bigr]
\text{\ is concave for large\ }x,\\
\tag{C2} \lim_{t\to 0}\frac{\Lambda(t)}{\log(1/t)}=\infty.
\end{gather}
Examples of majorants $\Lambda$ satisfying condition (C1) include
$$
(\log (1/x))^p,\quad 0<p<1,\quad \text{and}\quad \log(\log (1/x)),\quad x\to 0.
$$
Examples of majorants $\Lambda$ satisfying condition (C2) include
$$
(\log (1/x))^p,\quad p>1.
$$
Thus, we consider majorants which grow less rapidly than 
the Korenblum majorant ($\Lambda(x)=\log (1/x)$) in Case 1 or
more rapidly than 
the Korenblum majorant in Case 2.

\subsection{Weights $\Lambda$ satisfying condition (C1)}
We start with the following observation:
$$
\Lambda(t)=o(\log 1/t),\qquad t\to 0.
$$
Next we pass to some notations and auxiliary lemmas. Given a function $f$ in 
$L^1(\TT)$, we denote by $P[f]$ its Poisson transform,
$$
P[f](z)=\frac 1{2\pi}\int_0^{2\pi} \frac{1-|z|^{2}}{|e^{i\theta}-z |}f(e^{i\theta})\,d\theta, \qquad z\in \DD.
$$
Denote by $A(\DD)$ the disk-algebra, i.e., the algebra of functions continuous on the closed unit disk and holomorphic in $\DD$. 
A positive continuous increasing function $\omega$ on $[0,\infty)$ is said to be a modulus of continuity if $\omega(0)=0$, $t\mapsto\omega(t)/t$ decreases near $0$, 
and $\lim_{t\to 0}\omega(t)/t=\infty$.
Given a modulus of continuity $\omega$, 
we consider the Lipschitz space $\mathrm{Lip}_\omega(\TT)$ defined by 
$$
\Lip_\omega(\TT)=\{f\in C(\TT):|f(\xi)-f(\zeta)|\le C(f)\omega(|\xi-\zeta|)\}.
$$

Since the function $t\mapsto \exp[2\Lambda(1/t)]$ is concave for large $t$, and 
$\Lambda(t)=o(\log(1/t))$, $t\to 0$, we can apply a result of Kellay 
\cite[Lemma~3.1]{KK}, to get a non-negative summable function 
$\Omega_\Lambda$ on $[0,1]$ such that 
$$ 
e^{2\Lambda(\frac{1}{n+1})}-e^{2\Lambda(\frac{1}{n})}\asymp\int_{1-\frac{1}{n}}^{1}\Omega_{\Lambda}(t)dt,\qquad n\ge 1.
$$

Next we consider the Hilbert space $L_{\Omega_\Lambda}^{2}(\TT)$
of the functions $f\in L^2(\TT)$ such that 
$$
\|f\|_{\Omega_\Lambda}^2=|P[f](0)|^{2}+\int_{\DD}\frac{P[|f|^{2}](z)-|P[f](z)|^{2}}{1-|z|^{2}}\Omega_{\Lambda}(|z|)\,dA(z)<\infty,
$$
where $dA$ denote the normalized area measure. 
We need the following lemma.

\begin{lem}\label{lem ind} Under our conditions on $\Lambda$ and $\Omega_\Lambda$, we have  
\begin{enumerate}
\item $\|f\|_{\Omega_\Lambda}^2\asymp \sum_{n\in \ZZ}|\hat f(n)|^{2}
e^{2\Lambda(1/n)}$,\quad $f\in L_{\Omega_\Lambda}^{2}(\TT)$,
\item the functions $\exp(-c\Lambda(t))$ are moduli of continuity for $c>0$,
\item for some positive $a$, the function $\rho(t)=\exp(-\frac{3}{2a}\Lambda(t))$ satisfies the property
$$
\Lip_\rho(\TT)\subset L_{\Omega_\Lambda}^2(\TT).
$$
\end{enumerate}
\end{lem}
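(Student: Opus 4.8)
\emph{Item (1).} The plan is a Parseval computation that turns the $\Omega_\Lambda$-norm into a weighted $\ell^2$-sum of Fourier coefficients, followed by an elementary estimate of the weights. Expanding the Poisson integrals of $f$ and of $|f|^2$ into Fourier series and using orthogonality, one gets for $0\le r<1$
$$
\frac{1}{2\pi}\int_0^{2\pi}\Bigl(P[|f|^2](re^{i\theta})-|P[f](re^{i\theta})|^2\Bigr)\,d\theta=\sum_{n\in\ZZ}|\hat f(n)|^2\bigl(1-r^{2|n|}\bigr),
$$
since $\widehat{|f|^{2}}(0)=\sum_n|\hat f(n)|^{2}$ and $\frac{1}{2\pi}\int_0^{2\pi}|P[f](re^{i\theta})|^2\,d\theta=\sum_n|\hat f(n)|^2r^{2|n|}$. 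Integrating this against $\Omega_\Lambda(|z|)(1-|z|^2)^{-1}\,dA(z)$, with $dA(z)=\frac{1}{\pi}r\,dr\,d\theta$, yields
$$
\|f\|_{\Omega_\Lambda}^2=|\hat f(0)|^2+\sum_{n\ne0}|\hat f(n)|^2\,w_{n},\qquad w_n:=\int_0^1\frac{1-r^{2|n|}}{1-r^2}\,\Omega_\Lambda(r)\,2r\,dr ,
$$
the coefficient $\hat f(0)$ being kept apart because $w_0=0$. It remains to show $w_n\asymp e^{2\Lambda(1/|n|)}$ uniformly for $|n|\ge1$. For this I would combine the elementary estimate $\frac{1-r^{2|n|}}{1-r^2}\asymp\min\bigl(|n|,(1-r)^{-1}\bigr)$ valid for $\frac12\le r<1$ (the range $[0,\frac12]$ contributing only $O(1)$ to $w_n$) with the identity $\min\bigl(|n|,(1-r)^{-1}\bigr)\asymp 1+\Card\{1\le m\le|n|:\,1-\tfrac1m\le r\}$; integrating the latter term by term against $\Omega_\Lambda$ and inserting the defining relation $e^{2\Lambda(1/(m+1))}-e^{2\Lambda(1/m)}\asymp\int_{1-1/m}^{1}\Omega_\Lambda$ collapses the sum by telescoping to $e^{2\Lambda(1/|n|)}$, up to bounded additive terms that are harmless since $\Lambda(1/|n|)\to\infty$.

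\emph{Item (2).} Here the task is just to verify, for $\omega(t)=e^{-c\Lambda(t)}$ with $c>0$, the three defining properties of a modulus of continuity. Positivity, continuity and monotone increase come from $\Lambda$ being positive, decreasing and differentiable, and $\omega(0^+)=0$ from $\Lambda(0^+)=+\infty$; the relation $\omega(t)/t\to\infty$ follows from the opening observation $\Lambda(t)=o(\log(1/t))$, since $\log(\omega(t)/t)=\log(1/t)-c\Lambda(t)\to+\infty$. The only property using (C1) is that $\omega(t)/t$ be non-increasing near $0$: with $\phi(x)=\Lambda(1/x)$, the function $\psi=e^{c\phi}$ is concave and increasing for large $x$, so $\frac{x}{2}\psi'(x)\le\psi(x)-\psi(x/2)\le\psi(x)$ gives $c\phi'(x)=\psi'(x)/\psi(x)\le 2/x$; applying this with $2c$ in place of $c$ gives $\phi'(x)\le\frac{1}{cx}$ for large $x$, hence $\frac{d}{dt}\log(\omega(t)/t)=\frac{1}{t}\bigl(cx\phi'(x)-1\bigr)\le0$ at $t=1/x$ small.

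\emph{Item (3).} By item (1), membership of $f$ in $L^2_{\Omega_\Lambda}(\TT)$ is equivalent, up to norm equivalence, to $\sum_{n}|\hat f(n)|^2e^{2\Lambda(1/|n|)}<\infty$, so I would bound this sum for $f\in\Lip_\rho(\TT)$. The Lipschitz bound controls the $L^2$-modulus of continuity, $\|f(e^{i(\cdot+h)})-f\|_{L^2(\TT)}\le C(f)\rho(|h|)$, which by Parseval (tested at $|h|\asymp 2^{-k}$) gives the dyadic estimate $\sum_{2^k\le|n|<2^{k+1}}|\hat f(n)|^2\lesssim C(f)^2\rho(2^{-k})^2$. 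Summing over $k$, and using $\Lambda(2^{-k-1})\asymp\Lambda(2^{-k})$ — which holds because (C1) forces $t\Lambda'(t)\to0$ — one gets
$$
\sum_{n}|\hat f(n)|^2e^{2\Lambda(1/|n|)}\ \lesssim\ C(f)^2\sum_{k\ge0}e^{2\Lambda(2^{-k})}\rho(2^{-k})^2\ =\ C(f)^2\sum_{k\ge0}e^{(2-3/a)\Lambda(2^{-k})},
$$
which is finite once $a$ is chosen small enough that $3/a>2$ (small enough, in fact, that $e^{(2-3/a)\Lambda(2^{-k})}$ is summable over $k$, which it is since $\Lambda(2^{-k})\to+\infty$). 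Thus $\Lip_\rho(\TT)\subset L^2_{\Omega_\Lambda}(\TT)$.

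\emph{Main obstacle.} The one genuine difficulty is the weight estimate $w_n\asymp e^{2\Lambda(1/|n|)}$ of item (1): one must pass from the ``primitive'' defining property of $\Omega_\Lambda$, which only controls the tails $\int_{1-1/m}^1\Omega_\Lambda$, to the Poisson-type moments $w_n$, keeping the comparison constants independent of $n$ (and handling the mild irregularity of $\Lambda$ between dyadic scales). Once this is available, items (2) and (3) are routine, the only point needing attention in (3) being the choice of the constant $a$.
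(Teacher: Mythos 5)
Your items (1) and (2) are essentially sound. The Parseval computation giving $\|f\|_{\Omega_\Lambda}^2=|\hat f(0)|^2+\sum_{n\neq 0}|\hat f(n)|^2w_n$ with $w_n=\int_0^1\frac{1-r^{2|n|}}{1-r^2}\Omega_\Lambda(r)\,2r\,dr$ is correct, the comparison $\frac{1-r^{2|n|}}{1-r^2}\asymp\min\{|n|,(1-r)^{-1}\}$ and the telescoping of the defining relation for $\Omega_\Lambda$ do yield $w_n\asymp e^{2\Lambda(1/|n|)}$ (the regularity $e^{2\Lambda(1/(n+1))}\asymp e^{2\Lambda(1/n)}$ you need follows from the boundedness of $t\Lambda(t)$), and your concavity argument for the monotonicity of $\omega(t)/t$ under (C1) is fine. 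Note that the paper itself does not prove this lemma: it quotes all three items from Bourhim--El-Fallah--Kellay (and Aleman), so any self-contained argument is a different route by default.

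The genuine gap is the last step of item (3). After the (correct) dyadic estimate $\sum_{2^k\le|n|<2^{k+1}}|\hat f(n)|^2\le C(f)^2\rho(2^{-k})^2$, you need $\sum_{k\ge0}e^{2\Lambda(2^{-k})}\rho(2^{-k})^2=\sum_{k\ge0}e^{(2-3/a)\Lambda(2^{-k})}<\infty$, and you justify this solely by ``$\Lambda(2^{-k})\to+\infty$''. That is a non sequitur: divergence of the exponent's argument does not give summability. Already for $\Lambda(t)=\log\log(1/t)$ (one of the paper's typical majorants) the terms are $\asymp k^{2-3/a}$, so you are forced to take $a<1$, showing $a$ is not free; and for admissible majorants growing more slowly, e.g. $\Lambda(t)\sim\log\log\log(1/t)$ suitably normalized near $t=1$ (which satisfies the standing hypotheses and (C1)), the terms are $\asymp(\log k)^{2-3/a}$ and the series diverges for every $a>0$. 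So your argument cannot prove the inclusion in the stated generality. The loss is intrinsic to replacing the sup-norm modulus by the $L^2$-modulus: running instead the double-integral (Douglas-type) representation of the $\Omega_\Lambda$-seminorm and inserting $\omega_2(f;t)\le C\rho(t)$ produces the same divergent quantity $\int_0^1e^{(2-3/a)\Lambda(t)}\,dt/t$. To close the gap you must either add the hypothesis $\int_0^1e^{-c\Lambda(t)}\,dt/t<\infty$ for a suitable $c$ (equivalently, summability of $e^{-c\Lambda(2^{-k})}$), or reproduce the mechanism of the cited BFK Lemmas 6.2--6.3, in which $a$ is a specific constant attached to $\Omega_\Lambda$ rather than a parameter one may shrink at will. (A minor point: your claim $\Lambda(2^{-k-1})\asymp\Lambda(2^{-k})$ must really be the additive bound $\Lambda(2^{-k-1})-\Lambda(2^{-k})=O(1)$ to compare exponentials; fortunately this does follow from $t\Lambda'(t)\to0$, which your item (2) computation provides, so that particular step is harmless.)
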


For the first statement see \cite[Lemma~6.1]{BFK} (where it is attributed to Aleman 
\cite{AL}); the second statement is \cite[Lemma~8.4]{BFK}; the third statement 
follows from \cite[Lemmas~6.2 and 6.3]{BFK}.

Recall that 
$$
\mathcal{A}_{\Lambda }^{-1}=\{f\in\Hol(\DD):|f(z)|\le C(f)\exp (\Lambda(1-|z|))\}.
$$

\begin{lem}\label{duality}
Under our conditions on $\Lambda$, there exists a positive number $c$ such that 
$$
P_+\Lip_{e^{-c\Lambda}}(  \TT)
\subset(\mathcal{A}_{\Lambda }^{-1})^{*}
$$
via the Cauchy duality  
$$
\langle f,g\rangle=\sum_{n\ge 0}a_n\overline{\widehat{g}(n)},
$$
where $f(z) =\sum_{n\ge 0}a_n z^n \in\mathcal{A}_{\Lambda}^{-1}$,  
$g\in\Lip_{e^{-c\Lambda}}(  \TT)$, and $P_+$ is  the orthogonal projector from $ \IL(\TT)$ onto $\hH(\DD)$.
\end{lem}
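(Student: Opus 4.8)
The plan is as follows. Fix $g\in\Lip_{e^{-c\Lambda}}(\TT)$, with $c$ to be chosen along the way; everything reduces to the estimate
\[
|\langle f,g\rangle|\le C\,\|f\|_{\mathcal A_\Lambda^{-1}}\,\|g\|_{\Lip_{e^{-c\Lambda}}(\TT)},\qquad f\in\mathcal A_\Lambda^{-1},
\]
which makes $f\mapsto\langle f,g\rangle$ a bounded functional. Since the defining series need not converge absolutely, $\langle f,g\rangle$ is read as the (then legitimate) Abel limit $\lim_{\rho\to1^-}\sum_{n\ge 0}a_n\rho^{2n}\overline{\widehat g(n)}$. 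I would first set $h:=P_+g=\sum_{n\ge 0}\widehat g(n)\,z^n$ and note that for $0<\rho<1$,
\[
\sum_{n\ge 0}a_n\rho^{2n}\overline{\widehat g(n)}
=\frac1{2\pi}\int_0^{2\pi}f(\rho e^{i\theta})\,\overline{h(\rho e^{i\theta})}\,d\theta
=f(0)\overline{h(0)}+2\int_{\rho\DD}f'(z)\,\overline{h'(z)}\,\log\frac{\rho}{|z|}\,dA(z),
\]
the second equality being the standard Green / Littlewood--Paley identity for the holomorphic functions $f,h$. So it is enough to prove
\[
\int_{\DD}|f'(z)|\,|h'(z)|\,\log\frac1{|z|}\,dA(z)\le C\,\|f\|_{\mathcal A_\Lambda^{-1}}\,\|g\|_{\Lip_{e^{-c\Lambda}}};
\]
finiteness here gives by dominated convergence both the existence of $\langle f,g\rangle$ and the bound $|\langle f,g\rangle|\le|f(0)h(0)|+2\int_\DD|f'||h'|\log\frac1{|z|}\,dA$.

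The $g$-side comes from Lemma \ref{lem ind}. Enlarging $c$ so that $e^{-c\Lambda}\le\rho_0:=e^{-\frac3{2a}\Lambda}$ pointwise, parts (2)--(3) of the lemma give the continuous inclusions $\Lip_{e^{-c\Lambda}}(\TT)\subset\Lip_{\rho_0}(\TT)\subset L^2_{\Omega_\Lambda}(\TT)$, and together with part (1),
\[
\sum_{n}|\widehat g(n)|^2\,e^{2\Lambda(1/|n|)}\le C\,\|g\|^2_{\Lip_{e^{-c\Lambda}}}.
\]
Comparing Taylor coefficients in a weighted Dirichlet integral, this series is comparable to $|h(0)|^2+\int_\DD|h'(z)|^2(1-|z|)\,e^{2\Lambda(1-|z|)}\,dA(z)$, so the $h'$-factor is controlled in the weight $(1-|z|)e^{2\Lambda(1-|z|)}$.

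For the $f$-side, the Cauchy inequality on the circle $\{|w|=\tfrac{1+|z|}2\}$, together with $\Lambda(t^2)\le C\Lambda(t)$ and the fact that (C1) forces $t|\Lambda'(t)|\to0$ (whence $\Lambda(t/2)-\Lambda(t)\to0$ and $e^{\Lambda((1-|z|)/2)}\asymp e^{\Lambda(1-|z|)}$), yields $|f'(z)|\le C\|f\|_{\mathcal A_\Lambda^{-1}}\,e^{\Lambda(1-|z|)}/(1-|z|)$. One must then match this against the $g$-side weight: a Forelli--Rudin type estimate shows that, against the kernel $\log\frac1{|z|}$, the weight $(1-|z|)e^{2\Lambda(1-|z|)}$ is dual --- modulo the boundedness of an explicit radial integral operator --- to a weight in which $|f'|^2$ is integrable with constant $\lesssim\|f\|^2_{\mathcal A_\Lambda^{-1}}$; the convexity of $\Lambda$, the restriction $\Lambda=o(\log(1/t))$ and the sharp summability of the Kellay weight $\Omega_\Lambda$ are precisely what make this operator bounded. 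Feeding the matched pair into Cauchy--Schwarz then finishes the proof.

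The expected main obstacle is exactly this last step. The naive argument --- pointwise $|f'(z)|\lesssim e^{\Lambda(1-|z|)}/(1-|z|)$ combined with a Cauchy--Schwarz split against a single radial weight and the sharp $g$-side weight --- loses a factor of order $(1-|z|)^{-1}$, and this cannot be recovered by enlarging $c$: because $\Lambda=o(\log(1/t))$, the class $\Lip_{e^{-c\Lambda}}$ can never be made regular enough to supply even a fractional extra power of $n$. One is therefore forced to use the holomorphy of $f$ beyond its pointwise growth --- embedding $\mathcal A_\Lambda^{-1}$ into a weighted Bergman space adapted to $\Omega_\Lambda$, equivalently controlling the associated Bergman projection --- and this is the point at which all of the standing hypotheses on $\Lambda$ (convexity, $\Lambda(t^2)\le C\Lambda(t)$, $t^\alpha\Lambda(t)$ non-decreasing, $\Lambda=o(\log(1/t))$ and (C1)) are invoked.
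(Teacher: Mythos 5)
Your overall duality scheme is the right one, and your treatment of the $g$-side coincides with the paper's: both use Lemma~\ref{lem ind} to place $\Lip_\rho(\TT)$ inside $L^2_{\Omega_\Lambda}(\TT)$ and hence control $\sum_n|\widehat g(n)|^2e^{2\Lambda(1/|n|)}$. But the proof is not complete, and the gap is exactly where the content of the lemma lies: you never establish the $f$-side estimate, i.e.\ that $\|f\|_{\mathcal{A}_\Lambda^{-1}}$ controls a weighted $\ell^2$ quantity of the form $\sum_{n}|a_n|^2e^{-2c\Lambda(1/n)}$ (equivalently, that $\mathcal{A}_\Lambda^{-1}$ embeds into a weighted Bergman/Dirichlet space whose coefficient space is dual to the $g$-side). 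You yourself show that the pointwise bound $|f'(z)|\lesssim e^{\Lambda(1-|z|)}/(1-|z|)$ fed into the Littlewood--Paley identity loses a nonintegrable factor, and you then defer the repair to an unspecified ``Forelli--Rudin type estimate'' / boundedness of ``an explicit radial integral operator'' or of ``the associated Bergman projection''. No such estimate is formulated, let alone proved, and it is precisely at this step that the standing hypothesis (C1) must enter; as written, the argument stops at the point where the lemma actually has to be proved.

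For comparison, the paper resolves this step without any projection or derivative estimates: it introduces the Bergman-type space $L^2_\Lambda(\DD)$ with the weight $|\Lambda'(1-|z|)|\,e^{-2\Lambda(1-|z|)}$, chosen so that the mere sup-norm bound $|f(z)|\le C e^{\Lambda(1-|z|)}$ makes membership of $f\in\mathcal{A}_\Lambda^{-1}$ essentially immediate (the factor $|\Lambda'|$ makes the radial integral of $|\Lambda'|e^{-2\Lambda}$ converge, up to an adjustment of the constant in the exponent), and then the real work is the moment computation $e^{-2\Lambda(1/n)}\asymp\int_0^1 r^{2n+1}|\Lambda'(1-r)|e^{-2\Lambda(1-r)}\,dr$, proved from the concavity of $x\mapsto e^{2\Lambda(1/x)}$ via the inequality $e^{-2\Lambda(1/k)}\le \frac nk e^{-2\Lambda(1/n)}$, $k\le n$. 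This identifies $L^2_\Lambda(\DD)$ with the coefficient space $\mathcal B^2_\Lambda$, and the lemma follows by Cauchy--Schwarz against the $g$-side weights $e^{2\Lambda(1/n)}$, with the pairing then absolutely convergent (so no Abel-limit reading is needed). If you want to keep your Littlewood--Paley formulation, you would still have to prove the analogous moment estimates for the weights $(1-r)e^{\pm2\Lambda(1-r)}$ and, more importantly, supply a genuine proof of the embedding of $\mathcal{A}_\Lambda^{-1}$ into the resulting weighted Dirichlet space; simply invoking holomorphy ``beyond pointwise growth'' does not do this.
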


\begin{proof}
Denote 
$$
L_{\Lambda}^2(\DD)=\Big\{f\in\Hol(\DD): 
\int_{\DD}|f(z)|^2|\Lambda'(1-|z|)|e^{-2\Lambda(1-|z|)}\,dA(z)<+\infty\Big\},
$$
and
$$
\mathcal{B}_\Lambda^2=\big\{f(z)=\sum_{n\ge 0} a_nz^n: |a_0|^2+\sum_{n>0}
|a_n|^2e^{-2\Lambda(1/n)}<\infty\big\}.
$$
Let us prove that 
\begin{equation}
\label{xf5}
L_\Lambda^2(\DD)=\mathcal{B}_\Lambda^2.
\end{equation}
To verify this equality, it suffices sufficient to check that 
$$
e^{-2\Lambda(1/n)} \asymp \int_0^1 r^{2n+1}|\Lambda'(1-r)|e^{-2\Lambda(1-r)}\,dr.
$$
In fact, 
$$
\int_{1-1/n}^{1}r^{2n+1}|\Lambda^{'}(1-r)|e^{-2\Lambda(1-r)}\,dr\asymp \int_{1-1/n}^{1}|\Lambda^{'}(1-r)|e^{-2\Lambda(1-r)}\,dr \asymp e^{-2\Lambda(\frac{1}{n})},
\qquad n\ge 1.
$$
On the other hand,
\begin{eqnarray*}
\int_0^{1-1/n}r^{2n+1}|\Lambda'(1-r)|e^{-2\Lambda(1-r)}\,dr& = & 
-\int_0^{1-1/n}r^{2n+1}\,de^{-2\Lambda(1-r)} \\
&\asymp & -e^{-2\Lambda(1/n)}+(2n+1)\int_0^{1-1/n} r^{2n}e^{-2\Lambda(1-r)}dr \\
&\asymp& n\sum_{k=1}^{n}e^{-2n/k}e^{-2\Lambda(1/k)}\frac 1{k^2}.
\end{eqnarray*}
Since the function $\exp\bigl[2\Lambda(1/x)\bigr]$ is concave, 
we have $e^{2\Lambda(1/k)}\ge \frac kn e^{2\Lambda(1/n)}$, and hence,
$$
e^{-2\Lambda(1/k)}\le \frac nk e^{-2\Lambda(1/n)}.
$$
Therefore,
$$
\int_0^{1-1/n}r^{2n+1}|\Lambda'(1-r)|e^{-2\Lambda(1-r)}\,dr \le 
Cn^2e^{-2\Lambda(1/n)}\sum_{k=1}^{n}e^{-2n/k}\frac 1{k^3}\asymp 
e^{-2\Lambda(1/n)},
$$
and \eqref{xf5} follows.

Since $\mathcal{A}_\Lambda^{-1}\subset L_\Lambda^2(\DD)$, we have 
$(\mathcal{B}_\Lambda^2)^*\subset(\mathcal{A}_\Lambda^{-1})^*$.
By Lemma~\ref{lem ind}, we have $P_+\Lip_\rho(\TT)
\subset 
(\mathcal{B}_\Lambda^2)^*$. Thus, 
$$
P_+\Lip_\rho(\TT)
\subset(\mathcal{A}_\Lambda^{-1})^*.
$$
\end{proof}

\begin{lem}\label{limit inductive}
Let $f\in\mathcal{A}_\Lambda^{-n}$ for some $n>0$. The function $f$ is cyclic in 
$\mathcal{A}_\Lambda^{-\infty}$ if and only if there exists $m>n$ such that $f$ is cyclic in $\mathcal{A}_\Lambda^{-m}$.
\end{lem}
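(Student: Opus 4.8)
The plan is to reduce the statement to the single assertion that the constant function $1$ lies in $[f]$, and then to exploit the explicit description of the topology of $\mathcal{A}_\Lambda^{-\infty}$ as an inductive limit of the Banach spaces $\mathcal{A}_\Lambda^{-c}$, $c>0$. First I would check that polynomials are dense in $\mathcal{A}_\Lambda^{-\infty}$, and in each $\mathcal{A}_\Lambda^{-m}$ (in the latter, in the closed subspace they generate). Given $g\in\mathcal{A}_\Lambda^{-c}$, consider the dilations $g_\rho(z)=g(\rho z)$, $0<\rho<1$. Since $1-\rho|z|\ge 1-|z|$ and $\Lambda$ is non-increasing, $\|g_\rho\|_{\mathcal{A}_\Lambda^{-c}}\le\|g\|_{\mathcal{A}_\Lambda^{-c}}$; moreover $g_\rho\to g$ in $\mathcal{A}_\Lambda^{-(c+1)}$ as $\rho\to 1$, because on $\{|z|\le R\}$ this is just uniform convergence on a compact set, while for $|z|>R$
$$
|g_\rho(z)-g(z)|\,e^{-(c+1)\Lambda(1-|z|)}\le 2\|g\|_{\mathcal{A}_\Lambda^{-c}}\,e^{-\Lambda(1-|z|)},
$$
which is as small as we wish once $R$ is close enough to $1$, using $\Lambda(0^{+})=+\infty$. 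Each $g_\rho$ is holomorphic in a neighbourhood of $\overline{\DD}$, so its Taylor polynomials converge to it uniformly on $\overline{\DD}$, hence in $\mathcal{A}_\Lambda^{-(c+1)}$ and therefore in $\mathcal{A}_\Lambda^{-\infty}$. In particular $[1]_{\mathcal{A}_\Lambda^{-\infty}}=\mathcal{A}_\Lambda^{-\infty}$; since $[f]$ is closed and $z$-invariant, $1\in[f]$ already forces $[f]\supseteq[1]$, so in $\mathcal{A}_\Lambda^{-\infty}$ (and, in the same way, in $\mathcal{A}_\Lambda^{-m}$) cyclicity of $f$ is equivalent to $1\in[f]$.

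With this reduction in hand, the two implications are short. If $f$ is cyclic in $\mathcal{A}_\Lambda^{-m}$ for some $m>n$, choose polynomials $p_k$ with $p_kf\to 1$ in the norm of $\mathcal{A}_\Lambda^{-m}$; by continuity of the inclusion $\mathcal{A}_\Lambda^{-m}\hookrightarrow\mathcal{A}_\Lambda^{-\infty}$ we obtain $p_kf\to 1$ in $\mathcal{A}_\Lambda^{-\infty}$, hence $1\in[f]_{\mathcal{A}_\Lambda^{-\infty}}$ and $f$ is cyclic in $\mathcal{A}_\Lambda^{-\infty}$. Conversely, suppose $f$ is cyclic in $\mathcal{A}_\Lambda^{-\infty}$. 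Then $1\in[f]_{\mathcal{A}_\Lambda^{-\infty}}$, so there are polynomials $p_k$ with $p_kf\to1$ in $\mathcal{A}_\Lambda^{-\infty}$; by the description of convergence in the inductive limit there is $N>0$ such that all $p_kf$ and $1$ belong to $\mathcal{A}_\Lambda^{-N}$ and $\|p_kf-1\|_{\mathcal{A}_\Lambda^{-N}}\to0$. Put $m=\max(N,n)+1>n$; since $\mathcal{A}_\Lambda^{-N}\hookrightarrow\mathcal{A}_\Lambda^{-m}$ is continuous, $p_kf\to1$ in $\mathcal{A}_\Lambda^{-m}$, i.e. $1\in[f]_{\mathcal{A}_\Lambda^{-m}}$, and therefore $f$ is cyclic in $\mathcal{A}_\Lambda^{-m}$.

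I expect the only genuinely delicate point to be the content of the first paragraph: showing that $1$ is cyclic (so that "$f$ cyclic" really is the same as "$1\in[f]$"), and that the closure defining $[f]_{\mathcal{A}_\Lambda^{-\infty}}$ may be computed with sequences, so that the stated sequential convergence criterion for the inductive-limit topology applies. Both are handled by the dilation estimate above together with the structure of $\mathcal{A}_\Lambda^{-\infty}$; once they are in place the argument is purely formal, resting only on the continuity of the canonical inclusions $\mathcal{A}_\Lambda^{-c_1}\hookrightarrow\mathcal{A}_\Lambda^{-c_2}\hookrightarrow\mathcal{A}_\Lambda^{-\infty}$.
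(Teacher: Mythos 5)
Your argument follows essentially the same route as the paper: the paper's entire proof consists of recalling that a sequence converges in $\mathcal{A}_\Lambda^{-\infty}$ exactly when it converges in some fixed $\mathcal{A}_\Lambda^{-N}$, "from which the statement follows", and you have supplied the missing details (density of polynomials via dilations, reduction of cyclicity to $1\in[f]$, and the two implications via the sequential description of the inductive limit). One step, however, overreaches as written. Your dilation estimate gives $g_\rho\to g$ only in the larger space $\mathcal{A}_\Lambda^{-(c+1)}$, and in fact the norm-closure of the polynomials in the Banach space $\mathcal{A}_\Lambda^{-m}$ is the proper closed subspace of functions with $|f(z)|=o\bigl(\exp(m\Lambda(1-|z|))\bigr)$ as $|z|\to1$; for instance, for $\Lambda(t)=\log(1/t)$ and $m=1$ the function $(1-z)^{-1}$ lies at distance $1$ from every polynomial. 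Hence the claim that "in the same way, in $\mathcal{A}_\Lambda^{-m}$, cyclicity of $f$ is equivalent to $1\in[f]$" is false under the literal definition $[f]_{\mathcal{A}_\Lambda^{-m}}=\mathcal{A}_\Lambda^{-m}$ — under that reading no function is cyclic in any $\mathcal{A}_\Lambda^{-m}$ and the lemma would be vacuous. The final step of your converse direction ("therefore $f$ is cyclic in $\mathcal{A}_\Lambda^{-m}$") is correct only if cyclicity in $\mathcal{A}_\Lambda^{-m}$ is understood as weak invertibility, i.e.\ $1\in[f]_{\mathcal{A}_\Lambda^{-m}}$ (equivalently, $[f]_{\mathcal{A}_\Lambda^{-m}}$ contains the closure of the polynomials). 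This is the reading under which the lemma is non-vacuous and under which the paper applies it at the end of the proofs of its cyclicity theorems, so you should state it explicitly rather than derive it "in the same way" as for $\mathcal{A}_\Lambda^{-\infty}$, where your density argument genuinely works.

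A second, smaller point: the fact that the closure defining $[f]_{\mathcal{A}_\Lambda^{-\infty}}$ can be computed with sequences is not a consequence of the dilation estimate. It rests on the inductive limit being sequentially determined, which here follows from the compactness of the inclusions $\mathcal{A}_\Lambda^{-c_1}\hookrightarrow\mathcal{A}_\Lambda^{-c_2}$ for $c_1<c_2$ (a normal-families argument combined with $e^{(c_1-c_2)\Lambda(1-|z|)}\to0$ as $|z|\to1$). The paper takes exactly this for granted when it asserts its sequential convergence criterion, so your use of it is consistent with the paper, but it deserves an explicit sentence rather than being attributed to "the dilation estimate together with the structure of $\mathcal{A}_\Lambda^{-\infty}$".
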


\begin{proof} Notice that the space $\mathcal{A}_{\Lambda}^{-\infty }$ is endowed with the inductive limit topology induced by the spaces $\mathcal{A}_{\Lambda}^{-N }$. A sequence $\{f_n\}_n\in\mathcal{A}_\Lambda^{-\infty }$ converges to 
$g\in\mathcal{A}_\Lambda^{-\infty}$ if and only if there exists $N>0$ such that 
all $f_n$ and $g$ belong to $\mathcal{A}_\Lambda^{-N}$, and 
$\lim_{n\to+\infty}\|f_n-g\|_{\mathcal{A}_\Lambda^{-N}}=0$. The statement of the lemma follows.
\end{proof}

\begin{theo}\label{theo}
Let $\mu \in \LB$, and let the majorant $\Lambda$ satisfy condition {\rm(C1)}.
Then the function $f_\mu$ is cyclic in $\mathcal{A}_\Lambda^{-\infty}$ if and only if 
$\mu_s\equiv 0$.
\end{theo}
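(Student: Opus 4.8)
The ``if'' direction is already at hand: when $\mu_s\equiv 0$ we may write $f_\mu$ (after renormalising so that $f_\mu(0)=1$) as $f_\mu = S_{\tilde\nu}^{-1} S_{\tilde\nu}(0)\cdot f_{\mu_a}$ where $\mu_a$ has vanishing $\Lambda$-singular part, so Theorem~\ref{uneimplication} applies directly and $f_\mu$ is cyclic. (If $f_\mu$ has zeros in the unit disk this cannot happen, since a function with zeros is never cyclic in a space containing the constants, but zeros of $f_\mu$ would force $\mu$ to charge points with positive mass, contradicting $\mu\in\LB$; so this case does not arise.) The whole content is therefore in the converse: if $\mu_s\not\equiv 0$, then $f_\mu$ is not cyclic. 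Equivalently, by Proposition~\ref{prop}, if there is a $\Lambda$-Carleson set $F$ with $\mu_s(F)<0$ (negativity is automatic by Proposition~\ref{prop1}), we must exhibit a nonzero bounded linear functional on some $\mathcal{A}_\Lambda^{-m}$ that annihilates the invariant subspace $[f_\mu]$. By Lemma~\ref{limit inductive} it suffices to work in a single space $\mathcal{A}_\Lambda^{-m}$.

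\textbf{Strategy for the converse.} The plan is to use the duality of Lemma~\ref{duality}: the annihilator of $[f_\mu]_{\mathcal{A}_\Lambda^{-1}}$ is realised, via Cauchy pairing, inside $P_+\Lip_{e^{-c\Lambda}}(\TT)$, and one must produce a function $g\in\Lip_{e^{-c\Lambda}}(\TT)$, $g\not\equiv 0$, such that $\langle z^k f_\mu, g\rangle = 0$ for all $k\ge 0$. The natural candidate is built from $F$: one wants $g$ supported on (a neighbourhood of) $F$, or more precisely $g$ should vanish to high order off $F$ and the pairing should localise to $F$, where the boundary behaviour of $f_\mu$ is governed by the singular measure $-\mu_s\!\mid_F$. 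Concretely, following the Korenblum/Roberts circle of ideas, I would choose $g = \overline{f_\mu}^{-1}\cdot \psi$ (interpreted via boundary values) for a suitable cut-off $\psi$ adapted to $F$: since $|f_\mu(z)|\to 0$ as $z\to F$ nontangentially (this is where $\mu_s(F)<0$ enters, through $\mathrm{Re}\int\frac{e^{i\theta}+z}{e^{i\theta}-z}d\mu = P[\mu](z)\to -\infty$), the function $1/f_\mu$ is large near $F$ and the product with a function decaying like a high power of $\dist(\cdot,F)$ can be made Lipschitz with modulus $e^{-c\Lambda}$ by the entropy condition $Entr_\Lambda(F)<\infty$. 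One must check two things: (a) $g\in\Lip_{e^{-c\Lambda}}(\TT)$, which reduces to estimating $|f_\mu|^{-1}$ near $F$ against the complementary-arc lengths using condition (C1) (equivalently $\Lambda(t)=o(\log 1/t)$, so that $f_\mu$ is not ``too small'' — this is exactly the regime where $\mathcal{A}_\Lambda^{-\infty}$ is still large enough for the duality to bite); and (b) the orthogonality $\langle z^k f_\mu,g\rangle=0$, which should follow because $z^k f_\mu\cdot \bar g$ has, on $\TT$, an analytic extension of the right class whose Cauchy integral vanishes — morally $f_\mu \bar g = \overline{\psi}$ is (conjugate-)analytic, so all its nonnegative Fourier coefficients paired against $z^k$ give zero.

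\textbf{Key steps, in order.} First, reduce to: assume $\mu_s(F)<0$ for some $F\in\mathcal{C}_\Lambda$ and produce an annihilating functional. Second, establish the pointwise lower bound for $P[\mu]$ near $F$: for $z$ approaching $F$, $P[\mu](z)\le -\kappa\,|\mu_s(F)|\cdot(\text{something}) + O(\Lambda(\dist(z,F)))$, so that $|f_\mu(z)|\le \exp(P[\mu](z))$ decays, quantitatively, as $z\to F$. Third, construct the cut-off: let $\{I_n\}$ be the complementary arcs of $F$ and build $\psi\in\Lip_{e^{-c'\Lambda}}$ with $\psi$ vanishing to order comparable to $|I_n|^{-1}$-many derivatives near the endpoints of each $I_n$ and identically zero off a neighbourhood of $F$ — a standard ``Carleson set'' bump, whose Lipschitz modulus is controlled by $\sum|I_n|\Lambda(|I_n|)=Entr_\Lambda(F)<\infty$. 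Fourth, set $g$ to be the boundary function $\overline{f_\mu}^{-1}\psi$ and verify $g\in\Lip_{e^{-c\Lambda}}(\TT)$ by combining Steps 2 and 3 together with (C1); this is the technical heart. Fifth, verify $\langle z^k f_\mu,g\rangle=0$ for all $k\ge 0$, hence $g$ annihilates $[f_\mu]_{\mathcal{A}_\Lambda^{-1}}$, while $g\not\equiv 0$ because $\psi\not\equiv 0$ and $f_\mu$ is finite a.e.; conclude $f_\mu$ is not cyclic in $\mathcal{A}_\Lambda^{-1}$, and then by Lemma~\ref{limit inductive} not cyclic in $\mathcal{A}_\Lambda^{-\infty}$. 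The main obstacle is Step~4: controlling the modulus of continuity of $\overline{f_\mu}^{-1}\psi$ on $\TT$ simultaneously near $F$ (where $1/f_\mu$ blows up) and across the arcs $I_n$ (where both factors oscillate), using only $\mu\in\LB$, the entropy bound on $F$, and the growth restriction (C1) — this is precisely the place where the hypothesis $\Lambda(t)=o(\log 1/t)$ is indispensable and where the estimates from Lemmas~\ref{lem ind} and~\ref{duality} must be pushed to their limit.
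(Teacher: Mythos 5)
Your ``if'' direction is fine: $f_\mu$ is zero-free with $f_\mu(0)=1$, so Theorem~\ref{uneimplication} applies directly (the detour through a factorization of $f_\mu$ is unnecessary). The gap is in the converse, and it is structural rather than cosmetic. Your annihilator $g=\overline{f_\mu}^{-1}\psi$ asks the cut-off $\psi$ to do two incompatible jobs. Formally $\langle z^kf_\mu,g\rangle=\int_{\TT}z^k\overline{\psi}\,dm$, so for this to vanish for all $k\ge0$ the Fourier spectrum of $\psi$ must be one-sided; as you yourself acknowledge, $\overline{\psi}$ must be the boundary value of an analytic function. But your Step~3 builds $\psi$ as a real-variable ``Carleson-set bump'' which is identically zero off a neighbourhood of $F$, and a nonzero analytic function cannot have boundary values vanishing on a set of positive measure. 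Moreover the factor $\overline{f_\mu}^{-1}$ points the wrong way: $\mu_s(F)<0$ makes $1/f_\mu$ blow up precisely near $F$, where your bump is largest, so the product has no chance of lying in $\Lip_{e^{-c\Lambda}}(\TT)$; and the pointwise estimate of your Step~2 ($P[\mu](z)\to-\infty$ as $z\to F$) does not follow from $\mu_s(F)<0$, since the negative singular mass sits only on part of $F$ and influences $P[\mu]$ non-uniformly. Finally, pairing directly against $z^kf_\mu$ by boundary integrals requires boundary values of the unbounded function $f_\mu$, which the outline does not justify.

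The paper resolves exactly these points by a construction your outline does not reach. One sets $\nu=-\mu_s|F\ge0$ (a finite positive singular measure concentrated on $F$, by Proposition~\ref{prop1}) and invokes Shirokov's theorem to obtain an \emph{outer} function $\varphi\in\Lip_\rho(\TT)\cap\kH(\DD)$ whose zero set is exactly $F$ and such that, crucially, $\varphi S_\nu\in\Lip_\rho(\TT)\cap\kH(\DD)$; this analytic function is the legitimate substitute for your bump, vanishing on $F$ fast enough to tame $\overline{S_\nu}$, so that $\varphi\overline{S_\nu}\in\Lip_\rho(\TT)$. The annihilator is $g=P_+\bigl(\overline{z\varphi}\,S_\nu\bigr)$; condition (C1) enters only through Lemma~\ref{duality} (via Kellay's weight $\Omega_\Lambda$ and Lemma~\ref{lem ind}) to guarantee $g\in(\mathcal{A}_\Lambda^{-1})^*$; non-triviality of $L_g$ is obtained by showing that $L_g=0$ would force $\varphi/S_\nu\in\kH(\DD)$, which is impossible; and the orthogonality is verified against the bounded functions $z^nS_\nu$ rather than $z^nf_\mu$, after which non-cyclicity of $f_\mu$ in $\mathcal{A}_\Lambda^{-1}$ and Lemma~\ref{limit inductive} conclude. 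Without Shirokov's theorem (or an equivalent analytic ``bump'' adapted to the $\Lambda$-entropy of $F$), your Steps~2--4 cannot be carried out.
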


\begin{proof}
Suppose that the $\Lambda$-singular part $\mu_s$ of $\mu$ is non-trivial.
There exists a $\Lambda$-Carleson set $F\subset \TT$ such that 
$-\infty <\mu_s(F)<0$. We set $\nu=-\mu_s{\bigm|}F$. By a theorem of Shirokov 
\cite[Theorem 9, pp.137,139]{shi}, there exists an outer function $\varphi $ such that
$$
\varphi\in \Lip_\rho(\TT) \cap \kH(\DD),\qquad 
\varphi S_\nu\in \Lip_\rho(\TT)\cap \kH(\DD),
$$ 
and the zero set of the function $\varphi$ coincides with $F$. Next, for 
$\xi,\theta\in[0,2\pi]$ we have
\begin{eqnarray*}
|\varphi \overline{S_{\nu}}(e^{i\xi})-\varphi \overline{S_{\nu}}(e^{i\theta})|&=&|\varphi(e^{i\xi}) S_{\nu}(e^{i\theta})-\varphi(e^{i\theta}) S_{\nu}(e^{i\xi})|\\
&\leq&|\big(\varphi(e^{i\xi})-\varphi (e^{i\theta})\big)S_{\nu}(e^{i\theta})|+|\big(\varphi(e^{i\theta})-\varphi (e^{i\xi})\big)S_{\nu}(e^{i\xi})|\\
&+&|(\varphi S_{\nu})(e^{i\theta})-(\varphi S_{\nu})(e^{i\xi})|,
\end{eqnarray*}
and hence, 
$$
\varphi \overline{S_\nu}\in \Lip_\rho(\TT).
$$

Set $g=P_+\big(\overline{z\varphi }S_\nu\big)$. 
Since $\varphi\overline{S_\nu}\in \textrm{Lip}_\rho(\TT)$, we have 
$g\in 
(\mathcal{A}_\Lambda^{-1})^*$.
Consider the following linear functional on $\mathcal{A}_{\Lambda}^{-1}$:
$$
L_{g}(f) =\langle f,g\rangle=\sum_{n\ge 0}a_{n}\overline{\widehat{g}(n) },\qquad  
f(z)=\sum_{n\ge 0}a_{n}z^{n}\in\mathcal{A}_\Lambda^{-1}. 
$$

Suppose that $L_g=0$. Then, for every $n\ge 0$ we have
\begin{eqnarray*}
0 &=& L_{g}( z^{n})  \\
  &=& \int_0^{2\pi} e^{in\theta }\overline{g(e^{i\theta }) }\frac{d\theta }{2\pi } \\
  &=& \int_0^{2\pi} e^{i( n+1) \theta }\frac{\varphi ( e^{i\theta
}) }{S_{\nu}(e^{i\theta }) }\frac{d\theta }{2\pi }.
\end{eqnarray*}
We conclude that $\varphi/S_\nu\in \kH(\DD)$, which is impossible. 
Thus, $L_g\neq 0$. 

On the other hand we have, for every $n\ge 0$,
\begin{eqnarray*}
L_g(z^nS_\nu) &=& \int_0^{2\pi} e^{in\theta })S_{\nu}( e^{i\theta }) \overline{g( e^{i\theta }) }\frac{
d\theta }{2\pi }\\
&=& \int_0^{2\pi} e^{i n\theta }S_\nu( e^{i\theta }) \overline{g( e^{i\theta }) }
\frac{d\theta }{2\pi }\\
&=&\int_0^{2\pi} e^{i( n+1) \theta }\varphi ( e^{i\theta })
\frac{d\theta }{2\pi }\\&=&0.
\end{eqnarray*}
Thus,
$g\perp \left[f_{\mu}\right]_{\mathcal{A}_{\Lambda }^{-1 }}$ which 
implies that the function $f_\mu$ is not cyclic in ${\mathcal{A}_\Lambda^{-1}}$. 
By Lemma~\ref{limit inductive}, $f_\mu$ is not cyclic in
$\mathcal{A}_\Lambda^{-\infty }$.
\end{proof}

\subsection{Weights $\Lambda$ satisfying condition (C2)}

We start with an elementary consequence of the Cauchy formula.

\begin{lem}\label{dualite-2} Let $f(z)=\underset{n\geq0}\sum a_{n}z^{n}$ be an analytic function in $\DD$. If $f \in \mathcal{A}_{\Lambda}^{-\infty }$, then there exists $C>0$ such that
$$
|a_{n}|= O(\exp[C\Lambda(\frac{1}{n})]) \quad \text{as} \quad n\to +\infty.
$$
\end{lem}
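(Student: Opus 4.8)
The plan is the elementary Cauchy estimate announced by the name of the lemma. Since $f\in\mathcal{A}_{\Lambda}^{-\infty}=\bigcup_{c>0}\mathcal{A}_{\Lambda}^{-c}$, I would first fix a $c>0$ with $f\in\mathcal{A}_{\Lambda}^{-c}$, so that
$$
|f(z)|\le \|f\|_{\mathcal{A}_{\Lambda}^{-c}}\,\exp\bigl(c\Lambda(1-|z|)\bigr),\qquad z\in\DD .
$$
Then I would write the Cauchy formula for the Taylor coefficients on the circle $|z|=r$, $0<r<1$,
$$
a_n=\frac{1}{2\pi}\int_0^{2\pi}f(re^{i\theta})\,r^{-n}e^{-in\theta}\,d\theta,
$$
which yields $|a_n|\le r^{-n}\sup_{|z|=r}|f(z)|\le \|f\|_{\mathcal{A}_{\Lambda}^{-c}}\,r^{-n}\exp\bigl(c\Lambda(1-r)\bigr)$ for every $r\in(0,1)$.

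The second step is to optimize in $r$, or rather simply to take $r=r_n:=1-1/n$ for $n\ge 2$, so that $1-r_n=1/n$ and $\Lambda(1-r_n)=\Lambda(1/n)$. The only point to check is that the factor $r_n^{-n}=(1-1/n)^{-n}$ stays bounded: since $t\mapsto(1-1/t)^t$ increases to $e^{-1}$, one has $r_n^{-n}\le 4$ for all $n\ge2$. This gives
$$
|a_n|\le 4\,\|f\|_{\mathcal{A}_{\Lambda}^{-c}}\,\exp\bigl(c\Lambda(1/n)\bigr),\qquad n\ge 2,
$$
and, taking $C=c$, the harmless multiplicative constant $4\|f\|_{\mathcal{A}_{\Lambda}^{-c}}$ is absorbed into the $O(\cdot)$, so $|a_n|=O(\exp[C\Lambda(1/n)])$ as $n\to+\infty$, as claimed.

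I do not expect any genuine obstacle here: this is the crude a priori bound on the coefficients of functions in $\mathcal{A}_{\Lambda}^{-\infty}$, and it uses only the defining growth estimate together with the maximum modulus/Cauchy inequality. The single (trivial) point requiring attention is the choice of radius $r_n=1-1/n$ and the uniform bound $\sup_{n\ge 2}(1-1/n)^{-n}<\infty$; none of the finer hypotheses on the majorant $\Lambda$ — such as \eqref{xf1}, condition (C1), or condition (C2) — is needed for this statement.
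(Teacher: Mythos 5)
Your proof is correct and follows exactly the route the paper intends: the lemma is stated there as an elementary consequence of the Cauchy formula (no detailed proof is given), and your argument — the Cauchy coefficient estimate combined with the growth bound in $\mathcal{A}_{\Lambda}^{-c}$, evaluated at $r_n=1-1/n$ with the uniform bound $\sup_{n\ge 2}(1-1/n)^{-n}=4$ — is precisely that elementary argument. You are also right that no finer hypotheses on $\Lambda$ are needed here.
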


\begin{theo}\label{the cy 2} 
Let $\mu \in \LB$, and let the majorant $\Lambda$ satisfy condition {\rm(C2)}.
Then the function $f_\mu$ is cyclic in $\mathcal{A}_\Lambda^{-\infty}$ if and only if 
$\mu_s\equiv 0$.
\end{theo}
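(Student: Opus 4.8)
\textbf{Plan for the proof of Theorem \ref{the cy 2}.}

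The "if" direction is immediate: it is exactly Theorem \ref{uneimplication} (observe that when $\mu_s\equiv 0$, Theorem \ref{theo app } gives $\Lambda$-absolute continuity and the argument there produces multipliers $g_n$ with $f_\mu g_n\to 1$). So the real content is the converse: if $\mu_s\not\equiv 0$, then $f_\mu$ is not cyclic. The plan is to construct an explicit nonzero bounded linear functional on some $\mathcal{A}_\Lambda^{-m}$ that annihilates $[f_\mu]_{\mathcal{A}_\Lambda^{-m}}$, and then invoke Lemma~\ref{limit inductive} to conclude non-cyclicity in $\mathcal{A}_\Lambda^{-\infty}$. As in the proof of Theorem~\ref{theo}, pick a $\Lambda$-Carleson set $F$ with $-\infty<\mu_s(F)<0$ and set $\nu=-\mu_s{\bigm|}F\ge 0$; then $S_\nu$ is a singular inner function whose associated premeasure is dominated (in the appropriate sense) by $\mu$, so that $f_\mu/S_\nu\in\mathcal{A}_\Lambda^{-\infty}$ is again zero-free. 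The obstruction to cyclicity of $f_\mu$ must come from the inner factor $S_\nu$.

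The mechanism I expect to use is duality against a Lipschitz-type class, mirroring Lemma~\ref{duality} and the proof of Theorem~\ref{theo}, but now calibrated to condition (C2). By Lemma~\ref{dualite-2}, every $f=\sum a_nz^n\in\mathcal{A}_\Lambda^{-m}$ has $|a_n|=O(\exp[C\Lambda(1/n)])$ for some $C=C(m)$; dually, a sequence $(\widehat g(n))$ with $|\widehat g(n)|=O(\exp[-C'\Lambda(1/n)])$ for sufficiently large $C'$ defines, via the Cauchy pairing $\langle f,g\rangle=\sum_{n\ge 0}a_n\overline{\widehat g(n)}$, a bounded functional on $\mathcal{A}_\Lambda^{-m}$. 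The point of (C2), namely $\Lambda(t)/\log(1/t)\to\infty$, is that $\exp[-C'\Lambda(1/n)]$ decays faster than any polynomial power $n^{-k}$, so such $g$ automatically lies in $C^\infty(\TT)$ — in particular in every Lipschitz class. Now, by Shirokov's theorem (used already in Theorem~\ref{theo}) choose an outer function $\varphi$ with zero set exactly $F$ such that $\varphi$ and $\varphi S_\nu$ both lie in a smoothness class small enough that $\varphi\overline{S_\nu}$ has Fourier coefficients decaying like $\exp[-C'\Lambda(1/n)]$; set $g=P_+(\overline{z\varphi}\,S_\nu)$. Then $g$ induces a functional $L_g$ on $\mathcal{A}_\Lambda^{-m}$, and the two computations from the proof of Theorem~\ref{theo} go through verbatim: $L_g(z^nS_\nu)=\frac{1}{2\pi}\int_0^{2\pi}e^{i(n+1)\theta}\varphi(e^{i\theta})\,d\theta=0$ for all $n\ge 0$, so $g\perp[f_\mu]_{\mathcal{A}_\Lambda^{-m}}$; while $L_g\ne 0$ because $L_g=0$ would force $\varphi/S_\nu\in H^\infty$, contradicting that $\varphi$ vanishes on $F$ while $S_\nu$ does not vanish there.

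The main obstacle I anticipate is quantitative: showing that one can arrange $\varphi$ and $\varphi S_\nu$ smooth enough that the Fourier coefficients of $\varphi\overline{S_\nu}$ decay at rate $\exp[-C'\Lambda(1/n)]$ with a constant $C'$ large enough to beat the growth constant $C(m)$ coming from Lemma~\ref{dualite-2} — and simultaneously small enough that Shirokov's construction applies to the given $\Lambda$-Carleson set $F$. Under (C2) the weight $\Lambda$ grows faster than $\log(1/t)$, so the relevant Lipschitz/smoothness classes $\Lip_{\exp(-c\Lambda)}(\TT)$ are genuine (the functions $\exp(-c\Lambda(t))$ are moduli of continuity, as in Lemma~\ref{lem ind}(2)), and Shirokov's theorem on the existence of outer functions vanishing on a Carleson set, with prescribed smoothness controlled by the entropy $Entr_\Lambda(F)$, should supply exactly such a $\varphi$; one then fixes $m$ (equivalently $C(m)$) first and chooses $c$ accordingly. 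Handling the arc containing the point $1$ and the passage between $\mathcal{A}_\Lambda^{-m}$ and $\mathcal{A}_\Lambda^{-\infty}$ via Lemma~\ref{limit inductive} is routine once the functional is in hand.
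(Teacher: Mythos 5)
Your framework (Cauchy duality via Lemma~\ref{dualite-2}, the functional $L_g$ with $g=P_+(\overline{z\varphi}\,S_\nu)$, the two orthogonality computations, and Lemma~\ref{limit inductive}) matches the paper's endgame, and you correctly isolate the hard point. But the route you propose for that hard point does not work, and it is precisely where the paper does something different. Under condition (C2) one has $\Lambda(t)/\log(1/t)\to\infty$, so $e^{-c\Lambda(t)}=o(t^N)$ for every $N$ as $t\to0$; consequently $e^{-c\Lambda}$ is \emph{not} a modulus of continuity (it violates $\omega(t)/t\to\infty$), the class $\Lip_{e^{-c\Lambda}}(\TT)$ reduces to constants, and Lemma~\ref{lem ind}(2) is simply not available here — its proof lives in the (C1) regime where $\Lambda(t)=o(\log(1/t))$. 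More fundamentally, the decay you need for the annihilator, $|\widehat g(n)|\lesssim e^{-C'\Lambda(1/n)}$, is faster than any power of $1/n$, so no finite-smoothness (Lipschitz-type) information about $\varphi$ and $\varphi S_\nu$ can ever produce it; Shirokov's theorem, which is the tool of the (C1) proof, gives exactly such finite-type smoothness and is the wrong instrument in Case (C2). Saying ``choose $\varphi$ in a smoothness class small enough that $\varphi\overline{S_\nu}$ has Fourier coefficients decaying like $e^{-C'\Lambda(1/n)}$'' assumes the conclusion: the existence of such a $\varphi$ compatible with the singular factor is the entire content of the converse direction here.

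What the paper actually does: it invokes the Bourhim--El-Fallah--Kellay theorem \cite[Theorem 5.3]{BFK} (a Taylor--Williams type construction) to get an outer $\varphi\in\mathcal{A}_\Lambda^{\infty}$ vanishing together with \emph{all} its derivatives exactly on $F$, with quantified Gevrey-type bounds $|\varphi^{(n)}(z)|\le n!\,B^n e^{\widetilde\Lambda^*(n)}$ for an auxiliary majorant $\widetilde\Lambda$ with $\Lambda(t)=o(\widetilde\Lambda(t))$. The super-polynomial decay of $\widehat{\Psi}(n)$, $\Psi=\varphi\overline{S_\sigma}$, is then \emph{proved}, not inferred from class membership: one integrates by parts $k$ times, applies the Leibniz formula, and plays the infinite-order flatness of $\varphi$ on $F$ (via Taylor's formula, $|\varphi^{(n)}(z)|\le\frac{1}{k!}\dist(z,F)^k\max|\varphi^{(n+k)}|$) against the blow-up $|S_\sigma^{(n)}(z)|\le D^n n!\,\dist(z,F)^{-2n}$ of the derivatives of the singular inner factor near $F$; an optimization over $k$ using Khrushchev's lemma \cite[Lemma~6.5]{KHR} then yields $|\widehat\Psi(n)|=O(e^{-\Lambda(1/n)})$. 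This construction-plus-estimate is the missing core of your argument; without it (or an equivalent substitute), the functional $g$ you want is not known to exist, so the proposal has a genuine gap even though its surrounding architecture is correct.
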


\begin{proof}
We define
$$
\mathcal{A}_\Lambda^{\infty}=\bigcap_{c<\infty}\Big\{g\in \Hol(\DD)\cap C^{\infty}(\bar\DD): 
|\widehat{f}(n)|= O( \exp[-c\Lambda(\frac{1}{n})])\Big\},
$$
and, using Lemma \ref{dualite-2}, we obtain that ${\mathcal{A}_{\Lambda}^{\infty}} \subset({\mathcal{A}_{\Lambda}^{-\infty}})^{*}$ via the Cauchy duality
$$
\left\langle f, g\right\rangle = \sum_{n\geq 0}\widehat{f}(n)\overline{\widehat{g}(n)} =\lim_{r\to 1} \int^{2\pi}_{0}f(r\xi)\overline{g(\xi)}d\xi, \qquad f\in{\mathcal{A}_\Lambda^{-\infty}} ,\quad  g\in{\mathcal{A}_\Lambda^{\infty}}.
$$

Suppose that the $\Lambda$-singular part $\mu_s$ of $\mu$ is nonzero. Then there exists a $\Lambda$-Carleson set $F\subset \TT$ such that $-\infty <\mu_s(F)<0$. 
We set $\sigma=\mu_s{\bigm|}F$. 
By a theorem of Bourhim, El-Fallah, and Kellay \cite[Theorem 5.3]{BFK} 
(extending a result of Taylor and Williams), there exist an outer function $\varphi\in {\mathcal{A}_{\Lambda}^{\infty}}$ such that the zero set of $\varphi$ and of all its derivatives coincides exactly with the set $F$, a function $\widetilde{\Lambda}$ such that 
\begin{equation}
\label{xd4}
\Lambda(t)=o(\widetilde{\Lambda}(t)), \qquad t\to0,
\end{equation}
and a positive constant $B$ such that
\begin{equation}
\label{xe1}
|\varphi^{(n)}(z)|\le n! B^{n} e^{\widetilde{\Lambda}^{*}(n)},\qquad n\geq 0 ,\; z\in \DD,
\end{equation}
where $\widetilde{\Lambda}^*(n)= \sup_{x>0} \bigl \{nx-\widetilde{\Lambda}(e^{-x/2})\bigr\}$.

We set
$$
\Psi=\varphi\overline{S_\sigma}.
$$
For  some positive $D$ we have
\begin{equation}
\label{xe2}
|S_{\sigma}^{(n)}(z)|\le\frac{D^{n}n!}{\dist(z,F)^{2n}},\qquad z\in \mathbb{D},\,n\ge 0.
\end{equation}
By the Taylor formula, for every $n,k\ge 0$, we have
\begin{equation}
\label{xe3}
|\varphi^{(n)}(z)|\le \frac{1}{k!}\dist(z,F)^{k}\max_{w\in \DD}|\varphi^{(n+k)}(w)|, \qquad z\in \mathbb{D}.
\end{equation}
Next, integrating by parts, for every $n\not=0$, $k\ge 0$ we obtain 
$$
|\widehat{\Psi}(n)|=\bigl|\widehat{(\varphi\overline{S_\sigma})}(n)\bigr|
=\frac 1{2\pi}\Bigl|\int^{2\pi}_0 \frac{(\varphi \overline{S_\sigma})^{(k)}(e^{it})}{n^k} 
e^{-int} \,dt\Bigr|.
$$

Applying the Leibniz formula and estimates \eqref{xe1}--\eqref{xe3}, we obtain for $n\ge1$ that	
\begin{eqnarray*}
\bigl|\widehat{\Psi}(n)\bigr| &\le&
\inf_{k\ge 0}\Bigl\{\frac{1}{n^{k}}\max_{t\in[0,2\pi]}
\left|(\varphi\overline{S_{\sigma}})^{(k)}(e^{it})\right|\Bigr\}\\
&\le&\inf_{k\ge 0}\Bigl\{\frac{1}{n^{k}}\sum_{s=0}^k C^s_k
\max_{t\in[0,2\pi]}|S_{\sigma}^{(s)}(e^{it})|
\max_{t\in[0,2\pi]}|\varphi^{(k-s)}(e^{it})|\Bigr\}\\
&\le&\inf_{k\ge 0}\Bigl\{\frac{1}{n^{k}}
\sum_{s=0}^k C^s_k D^{s}s! \frac{1}{(2s)!} 
(k+s)! B^{k+s} e^{\widetilde{\Lambda}^{*}(k+s)}
\Bigr\}\\
&\le& \inf_{k\ge 0}\Bigl\{e^{\widetilde{\Lambda}^{*}(2k)}
\Bigl(\frac{B^2D}{n}\Bigr)^{k}
\sum_{s=0}^k  \frac{(k+s)!k!}{(2s)!(k-s)!} \Bigr\}\\
&\le& \inf_{k\ge 0}\Bigl\{k!e^{\widetilde{\Lambda}^{*}(2k)}
\Bigl(\frac{4B^2D}{n}\Bigr)^{k}\Bigr\}\\
&\le& \inf_{k\ge 0}k!\Bigl\{\Bigl(\frac{4B^2D}{n}\Bigr)^{k}  
\sup_{0<t<1}\Bigl\{e^{-\widetilde{\Lambda}(t^{1/4})}t^{-k}\Bigr\}\Bigr\}.
\end{eqnarray*}

By property \eqref{xd4}, for every $C>0$ there exists a positive number $K$ such that 
$$
e^{-\widetilde{\Lambda}(t^{1/4})}\le Ke^{-\Lambda(Ct)}, 
\qquad t\in(0,1).
$$

We take $C=\frac{1}{8B^2D}$, and obtain for $n\not=0$ that 
\begin{eqnarray*}
\bigl|\widehat{\Psi}(n)\bigr| 
&\le& K\inf_{k\ge 0}\Bigl\{\Big(\frac{4B^2D}{n}\Big)^k k! \sup_{0<t<1}\frac{e^{-\Lambda(Ct)}}{t^{k}}\Bigr\}\\
&\le&
K_1\inf_{k\ge 0}\Bigl\{(2n)^{-k} k! \sup_{0<t<1}\frac{e^{-\Lambda(t)}}{t^{k}}\Bigr\}.
\end{eqnarray*}
Finally, using \cite[Lemma~6.5]{KHR} (see also \cite[Lemma~8.3]{BFK}), we get 
$$
|\widehat{\Psi}(n)|=O\bigl(e^{-\Lambda(1/n)}\bigr),\qquad |n|\to\infty.
$$

Thus, the function 
$g=P_+\big(\overline{z\varphi}S_{\sigma}\big)$ belongs to 
$({\mathcal{A}_{\Lambda}^{-1}})^*$. Now we obtain that 
$f_{\mu}$ is not cyclic using the same argument as that 
at the end of Case~1. This 
concludes the proof 
of the theorem.
\end{proof}

Theorems~\ref{theo} and \ref{the cy 2} together give a positive answer 
to a conjecture by Deninger \cite[Conjecture 42]{CD}.

We complete this section by two examples that show how the cyclicity 
property of a fixed function changes in a scale of 
$\mathcal{A}_{\Lambda}^{-\infty}$ spaces.

\begin{example} \label{ex1} Let $\Lambda_\alpha(x)=(\log (1/x))^\alpha$, $0<\alpha<1$, and let $0<\alpha_0<1$.
There exists a singular inner function 
$S_\mu$ such that
$$
S_\mu\text{\ is cyclic in\ }\mathcal{A}_{\Lambda_\alpha}^{-\infty} \iff \alpha>\alpha_0.
$$
\end{example}

{\it Construction.} We start by defining a Cantor type set 
and the corresponding canonical measure. Let $\{m_k\}_{k\ge 1}$ be a sequence of natural numbers. Set $M_k=\sum_{1\le s\le k}m_s$, and assume that 
\begin{equation}
\label{1k3}
M_k\asymp m_k, \qquad k\to\infty.
\end{equation}
Consider the following iterative procedure. Set $\mathcal I_0=[0,1]$. On the step $n\ge 1$ the set $\mathcal I_{n-1}$ 
consist of several intervals $I$. We divide each $I$ into 
$2^{m_n+1}$ equal subintervals and replace it by the union of 
every second interval in this division. The union of all such groups 
is $\mathcal I_n$. Correspondingly, $\mathcal I_n$ consists 
of $2^{M_n}$ intervals; each of them is of length 
$2^{-n-M_n}$. Next, we consider the probabilistic measure $\mu_n$ equidistributed on $\mathcal I_n$. 
Finally, we set $E=\cap_{n\ge 1}\mathcal I_n$, 
and define by $\mu$ the weak limit of the measures $\mu_n$.

Now we estimate the $\Lambda_\alpha$-entropy of $E$:
$$
Entr_{\Lambda_\alpha}(\mathcal I_n)\asymp\sum_{1\le k\le n}
2^{M_k}\cdot 2^{-k-M_k}\cdot
\Lambda_\alpha(2^{-k-M_k})\asymp \sum_{1\le k\le n}2^{-k}\cdot m_k^\alpha,
\qquad n\to\infty.
$$
Thus, if 
\begin{equation}
\label{1k1}
\sum_{n\ge 1}2^{-n}\cdot m_n^{\alpha_0}<\infty,
\end{equation}
then $Entr_{\Lambda_{\alpha_0}}(E)<\infty$.
By Theorem~\ref{theo}, $S_\mu$ is not cyclic in 
$\mathcal{A}_{\Lambda_\alpha}^{-\infty}$ for $\alpha\le\alpha_0$.

Next we estimate the modulus of continuity of the measure $\mu$,
$$
\omega_\mu(t)=\sup_{|I|=t}\mu(I).
$$
Assume that 
$$
A_{j+1}=2^{-{(j+1)}-M_{j+1}}\le |I|< A_j=2^{-j-M_j},
$$
and that $I$ intersects with one of the intervals $I_j$
that constitute $\mathcal I_j$. 
Then
$$
\mu(I)\le 4\frac{|I|}{A_j}\mu(I_j)=4|I|2^{j+M_j}2^{-M_j}=
4|I|2^j.
$$ 
Thus, if 
\begin{equation}
\label{1k2}
2^j\le C (\log (1/A_j))^\alpha
\asymp m_j^\alpha, \qquad j\ge 1,\,\alpha_0<\alpha<1,
\end{equation}
then 
$$
\omega_\mu(t)\le Ct(\log (1/t))^\alpha.
$$
By \cite[Corollary B]{BBC}, we have $\mu(F)=0$ for any 
$\Lambda_\alpha$-Carleson set $F$, $\alpha_0<\alpha<1$.
Again by Theorem~\ref{theo}, $S_\mu$ is cyclic in 
$\mathcal{A}_{\Lambda_\alpha}^{-\infty}$ for $\alpha>\alpha_0$.
It remains to fix $\{m_k\}_{k\ge 1}$ satisfying 
\eqref{1k3}--\eqref{1k2}. The choice $m_k=
2^{k/\alpha_0}k^{-2/\alpha_0}$ works.\hfill $\Box$
\medskip

Of course, instead of Theorem~\ref{theo} we could use here 
\cite[Theorem~7.1]{BFK}.

\begin{example} Let $\Lambda_\alpha(x)=(\log (1/x))^\alpha$, $0<\alpha<1$, and let $0<\alpha_0<1$.
There exists a premeasure $\mu$ such that $\mu_s$ is infinite, 
$$
f_\mu\text{\ is cyclic in\ }\mathcal{A}_{\Lambda_\alpha}^{-\infty} \iff \alpha>\alpha_0,
$$
where $f_\mu$ is defined by \eqref{xdf1}.
\end{example}

It looks like the subspaces 
$[f_\mu]_{\mathcal{A}_{\Lambda_\alpha}^{-\infty}}$, 
$\alpha\le\alpha_0$,
contain no nonzero Nevanlinna class functions.
For a detailed discussion on Nevanlinna class generated 
invariant subspaces in the Bergman space (and in the Korenblum space) 
see \cite{HKZ1}.

For $\alpha\le\alpha_0$, instead of Theorem~\ref{theo} we could once again use here \cite[Theorem~7.1]{BFK}.
\medskip

{\it Construction.} We use the measure $\mu$ constructed in 
Example~\ref{ex1}.

Choose a decreasing sequence $u_k$ of positive numbers such that
$$
\sum_{k\ge 1}u_k=1,\qquad \sum_{k\ge 1}v_k=+\infty,
$$
where $v_k=u_k\log\log(1/u_k)>0$, $k\ge 1$.

Given a Borel set $B\subset B^0=[0,1]$, denote
$$
B_k=\{u_kt+\sum_{j=1}^{k-1}u_j:t\in B\}\subset[0,1],
$$
and define measures $\nu_k$ supported by $B^0_k$ by 
$$
\nu_k(B_k)=\frac{v_k}{u_k}m(B_k)-v_k\mu(B),
$$
where $m(B_k)$ is Lebesgue measure of $B_k$.

We set 
$$
\nu=\sum_{k\ge 1}\nu_k.
$$
Then $\nu(B^0_k)=\nu_k(B^0_k)=0$, $k\ge 1$, and $\nu$ is a premeasure.

Since
$$
v_k\le C(\alpha)u_k\Lambda_\alpha(u_k),\qquad 0<\alpha<1,
$$
$\nu$ is a $\Lambda_\alpha$-bounded premeasure for $\alpha\in(0,1)$.

Furthermore, as above, by Theorem~\ref{theo}, $f_\nu$ is not cyclic in 
$\mathcal{A}_{\Lambda_\alpha}^{-\infty}$ for $\alpha\le\alpha_0$.

Next, we estimate  
$$
\omega_\nu(t)=\sup_{|I|=t}|\nu(I)|.
$$
As in Example~\ref{ex1}, if $j,k\ge 1$ and
$$
u_kA_{j+1}\le |I|< u_kA_j,
$$ 
then
\begin{equation}
\frac{|\nu(I)|}{|I|}\le C\cdot 2^j \cdot \frac{v_k}{u_k}.
\label{eqa}
\end{equation}
Now we verify that
\begin{equation}
\omega_\nu(t)\le Ct(\log (1/t))^\alpha, \qquad \alpha_0<\alpha<1.
\label{eqb}
\end{equation}
Fix $\alpha\in(\alpha_0,1)$, and use that
$$
\bigl(\log\frac{1}{A_j}\bigr)^\alpha\ge C\cdot 2^{(1+\varepsilon)j},
\qquad j\ge 1,
$$
for some $C,\varepsilon>0$. By \eqref{eqa}, it remains to check that
$$
2^j\log\log\frac{1}{u_k}\le C\bigl(2^{(1+\varepsilon)j}+
\bigl(\log\frac{1}{u_k}\bigr)^\alpha\bigr).
$$
Indeed, if 
$$
\log\log\frac{1}{u_k}>2^{\varepsilon j},
$$
then
$$
C\bigl(\log\frac{1}{u_k}\bigr)^\alpha>2^j\log\log\frac{1}{u_k}.
$$

Finally, we fix $\alpha\in(\alpha_0,1)$ and a 
$\Lambda_\alpha$-Carleson set $F$. We have  
$$
\mathbb T\setminus F=\sqcup_s L^*_s
$$
for some intervals $L^*_s$.
By \cite[Theorem B]{BBC}, there exist disjoint intervals $L_{n,s}$ such that 
$$
F\subset \sqcup_s L_{n,s},\qquad \sum_s |L_{n,s}|\Lambda_\alpha(|L_{n,s}|)<\frac 1n,\qquad n\ge 1.
$$
Then by \eqref{eqb},
$$
\sum_s|\nu(L_{n,s})|<\frac cn.
$$
Set
$$
\mathbb T\setminus \sqcup_s L_{n,s}=\sqcup_s L^*_{n,s}.
$$
Then
$$
\bigl|\sum_s \nu(L^*_{n,s}) \bigr|<\frac cn.
$$
Since $F$ is $\Lambda_\alpha$-Carleson, we have
$$
\sum_s |L^*_{s}|\Lambda_\alpha(|L^*_{s}|)<\infty,
$$
and hence,
$$
\sum_s \nu(L^*_{n,s})\to \sum_s \nu(L^*_{s})
$$
as $n\to\infty$.
Thus, 
$$
\sum_s \nu(L^*_{s})=0,
$$
and hence, $\nu(F)=0$. 
Again by Theorem~\ref{theo}, $f_\nu$ is cyclic in 
$\mathcal{A}_{\Lambda_\alpha}^{-\infty}$ for $\alpha>\alpha_0$.
\hfill $\Box$


\end{document}